\numberwithin{equation}{section}
\newtheorem{theorem}{Theorem}[section]
\newtheorem{lemma}[theorem]{Lemma}
\newtheorem{proposition}[theorem]{Proposition}
\newtheorem{corollary}[theorem]{Corollary}
\theoremstyle{definition}
\newtheorem{definition}[theorem]{Definition}
\newtheorem{remark}[theorem]{Remark}
\theoremstyle{plain}
\newtheorem{example}[theorem]{Example}
\titleformat*{\section}{\large\bfseries}
\titleformat*{\subsection}{\normalsize\bfseries}
\newcommand{\bC}{\ensuremath{\mathbb{C}}}
\newcommand{\bCx}{\ensuremath{\mathbb{C}^*}}
\newcommand{\bZ}{\ensuremath{\mathbb{Z}}}
\newcommand{\subno}[1]{\noindent\textnormal{(#1)}}
\newcommand{\afCase}[1]{\textbf{Case~#1:}}
\newcommand{\End}{\mathrm{End }}
\newcommand{\Span}{\mathrm{span }}
\newcommand{\Rad}{\mathrm{Rad }}
\newcommand{\rank}{\mathrm{rank }}
\newcommand{\Lie}{\mathrm{Lie }}
\newcommand{\bZz}{\ensuremath{\mathbb{Z}_2}}
\newcommand{\ep}{\ensuremath{{\bar{0}}}}
\newcommand{\op}{\ensuremath{{\bar{1}}}}
\newcommand{\cp}{\ensuremath{\mathbb{C}[\partial]}}
\newcommand{\Vir}{\mathrm{Vir }}
\newcommand{\CH}{\mathcal{H }}
\newcommand{\Aut}{\mathrm{Aut }}
\begin{document}
	\begin{center}
		\bfseries\large
		Lie conformal superalgebras of rank $(2+1)$
		\footnote{Supported by the National Natural Science Foundation of China (No. 12471027) and Natural Science Foundation of Shanghai (No. 24ZR1471900).
			\\ \indent \ \ $^*$Corresponding author: Xiaoqing Yue (xiaoqingyue@tongji.edu.cn)}

		\mdseries\normalsize
		\bigskip
		Jinrong Wang, Xiaoqing Yue$^*$
		
		\footnotesize
		\smallskip
		School of Mathematical Sciences, Key Laboratory of Intelligent Computing and  Applications (Ministry of Education), Tongji University, Shanghai 200092, China
		
		\smallskip
		E-mails: 2130926@tongji.edu.cn, xiaoqingyue@tongji.edu.cn 
	\end{center}
	{
		\footnotesize
		\noindent\textbf{Abstract}. In this paper, Lie conformal superalgebras of rank $(2+1)$ are completely classified (up to isomorphism) and their automorphism groups are determined. Furthermore, we give the classification of the finite irreducible conformal modules over them and the actions are explicitly described.
		
		\smallskip
		\noindent\textit{Keywords}:
		Lie conformal superalgebra, automorphism groups, conformal module
		
		\smallskip
		\noindent\textit{Mathematics Subject Classification (2020)}: 17B10, 17B40, 17B65, 17B68
	}
	
	\section{Introduction}\label{sect:intro}
	\hspace{1.5em}The notion of Lie conformal (super)algebras was originally introduced by Kac in \cite{K98}. Lie conformal superalgebras axiomatize the singular part of the operator product expansion of chiral fields in conformal field theory. Moreover, it has been shown that the theory of Lie conformal superalgebras is an adequate tool for the study of Lie superalgebras satisfying the locality property \cite{K97}. In the past two decades, the structure and representation theories of Lie conformal superalgerbras have been greatly developed \cite{CK,CKW,FK,FKR,K98,WWX,WY,X22,ZCY,MZ}.
	
	As an algebraic object, the problem of classification of the Lie conformal (super)algebras is remarkable. Finite simple Lie conformal superalgebras were classified completely in \cite{FK}. The list consists of current Lie conformal superalgebras $\mathrm{Cur}\,\mathfrak{g}$ associated to a simple finite-dimensional Lie superalgebra $\mathfrak{g}$ and six series of Lie conformal superalgebras called Cartan type. The classifications of their finite irreducible conformal modules were given in a series of papers \cite{BKL10,BKL13,BKLR,CK,MZ}. 
	
	However, when it comes to the non-simple cases, it is much more difficult, partially due to that there does not exist conformal analogues of Levis' theorem. The rank one Lie conformal algebras were classified in \cite{DK}, which shows that they are isomorphic to either a commutative Lie conformal algebra or the Virasoro Lie conformal algebra 
	\begin{equation*}
		\Vir=\cp L\ \text{ with }\lambda\text{-bracket }\ [L_\lambda L]=(\partial+2\lambda)L.
	\end{equation*}
	Therefore, the Virasoro Lie conformal algebra is the simplest nontrivial Lie conformal algebra and the representation theory of $\Vir$ can be applied to any non-simple Lie conformal algebra which contains it as a subalgebra \cite{CK}. The rank $(1+1)$ Lie conformal superalgebras were classified in \cite{ZCY} and the most important one is the Neveu--Schwarz Lie conformal superalgebra (see Example~\ref{exa:NS}).
	
	Classification of the remaining higher rank Lie conformal (super)algebras are challengeable. In general, the classification of Lie conformal (super)algebras is based on solving the equations of two variable polynomials (see \eqref{K.1} and \eqref{K.2}). Kac commented in \cite{K98} that ``It is clearly impossible to solve those equations directly for $n\ge 2$." Encouragingly, the rank two Lie conformal algebras were classified completely in \cite{BCH} recently, which is still based on solving equations. Then, the finite irreducible conformal modules of rank two Lie conformal algebras were classified in \cite{XHW}. Besides, many rank three Lie conformal algebras were constructed, such as Schr\"odinger--Virasoro Lie conformal algebra (cf. \cite{SY}) and $\mathcal{W}(a,b,r)$ Lie conformal algebra (cf. \cite{LXY}). In particular, Leibniz conformal algebras of rank three and the Schr\"odinger--Virasoro type Lie conformal algebras were classified completely in \cite{W} and \cite{WW}, respectively.
	
	Since the odd part of a Lie conformal superalgebra can be viewed as a conformal module over its even part, in this paper, we classify the Lie conformal superalgebra of rank $(2+1)$ based on the aforementioned conclusions. In summary, we classify them into 18 cases, of which 5 are trivial (see Theorem~\ref{thm:classification}). The trivial case means the Lie conformal superalgebra satisfies either of the following two conditions:
	\begin{itemize}[itemsep=0pt]
		\item The $\lambda$-brackets containing the odd part is trivial (see Remark~\ref{trivial case}).
		\item The $\lambda$-brackets containing the even part is trivial and its rank is $(n+1)$, for some positive integer $n$ (see Proposition~\ref{prop:O}).
	\end{itemize}
	For the nontrivial cases, which are more important and our focuses, there are 4 solvable Lie conformal superalgebras, 5 Lie conformal superalgebras that can be decomposed into the direct sum of two Lie conformal (super)algebras and the last 4 Lie conformal superalgebras may be more noteworthy. For instance, one Lie conformal superalgebra of rank $(2+1)$ with a basis $\{A,B,X\}$ satisfying the relations 
	\begin{align*}
		[A_\lambda A]=(\partial+2\lambda)A,\ [A_\lambda B]=(\partial+a\lambda+b)B,\ [A_\lambda X]=(\partial+\frac{a+1}{2}\lambda+\frac{b}{2})X,\ [X_\lambda X]=2B,
	\end{align*}
	is an important example with two parameters $a,b\in\bC$. Recall that there exists a close correspondence between the Lie conformal superalgebra and a class of infinite-dimensional Lie superalgebra. If we take $a=1,b=0$, then it is related to the super Heisenberg--Virasoro Lie algebra of Ramond type or Neveu--Schwarz type \cite{CDH}. If we take $a=2,b=0$, then it associates to the $N=1$ super Bondi--Metzner--Sachs ($\mathrm{BMS}_3$) algebra, which is used to describe the asymptotic structure of the $N=1$ supergravity \cite{BDMT,DGL}. 
	
	In addition, we are curious whether the following Lie conformal superalgebra has been or can be applied to the fields of Lie superalgebra or mathematical physics in the future. It is a Lie conformal superalgebra with a basis $\{A,B,X\}$ satisfying the relations 
	\begin{align*}
		[A_\lambda A]=(\partial+2\lambda)A,\ [A_\lambda B]=(\partial+b)B,\ [A_\lambda X]=(\partial+\lambda+\frac{b}{2})X,\ [X_\lambda X]=(\partial+b)B,
	\end{align*}
	for some $b\in\bC$.
	
	Furthermore, in this paper, the finite nontrivial irreducible conformal modules (FNICMs) of rank $(2+1)$ Lie conformal superalgebras are also classified. The super analogue of Lie's theorem (cf. \cite{K77}) and Cheng--Kac's lemma (cf. \cite{CK}) play key roles in our proof. In conclusion, we construct five rank one conformal modules and three rank $(1+1)$ conformal modules (check Theorem~\ref{thm:FNICM} for more details).
	\begin{table}[!ht]
		\centering
		\begin{threeparttable}
			\begin{tabular}{ccc}
				\toprule
				\phantom{12}rank\phantom{12} & \phantom{123}$n$-parameters\phantom{123} & \phantom{12345}FNICM\phantom{12345} \\
				\midrule
				\multirow{3}*{$1$} &  $\bC[\lambda]$\tnote{*} & $N_\mathfrak{f}, N_\mathfrak{g}$ \\
				&  $n=2$ & $M_{A,\Delta,\eta},M_{B,\Delta,\eta}$ \\
				& $n=3$ & $M_{\Delta,\eta,\omega}$\\
				\hline
				\multirow{2}*{$(1+1)$} & $n=2$ & $M^+_{\Delta,\eta},M^-_{\Delta,\eta}$ \\
				& $n=4$ & $M_{\Delta,\zeta,c,\epsilon}$\\
				\bottomrule
			\end{tabular}
			\begin{tablenotes}
				\footnotesize
				\item[*] $N_\mathfrak{f}$, $N_\mathfrak{g}$ are determined by a nonzero polynomial respectively.
			\end{tablenotes}
		\end{threeparttable}
	\end{table}

	This paper is organized as follows. In Section 2, we first review some basic definitions of Lie conformal superalgebras and conformal modules. Then, we introduce two important conformal modules, which are the Virasoro module and the Neveu--Schwarz module.
	
	In Section 3, based on the conclusions of the classification of rank two Lie conformal algebras and their finite conformal modules, we classify the Lie conformal superalgebra of rank $(2+1)$ into 4 types and 18 cases. 
	
	In Section 4, the automorphism groups of the nontrivial rank $(2+1)$ Lie conformal superalgebras are investigated, which is an extension of the automorphism groups of the rank two Lie conformal algebras. 
	
	In the last section, the finite irreducible conformal modules over the nontrivial Lie conformal superalgebras of rank $(2+1)$ are classified completely. The conclusion shows that their finite conformal modules must be of rank one or rank $(1+1)$. As an application, we discuss a class of Lie conformal superalgebras and their irreducible conformal modules. 
	
	Throughout this paper, we use notations $\bC$, $\bCx$, $\bZ$ and $\bZ_+$ to represent the set of complex numbers, nonzero complex numbers, integers and nonnegative integers, respectively. In addition, all vector spaces and tensor products are over $\bC$. For any vector space $V$, we use $V[\lambda]$ to denote the set of polynomials of $\lambda$ with coefficients in $V$. We use $(A,B)=(C,D)$ to represent $A=C$ and $B=D$ for convenience. Conversely, $(A,B)\neq(C,D)$ means $A\neq C$ or $B\neq D$. 
	
	\section{Preliminaries}\label{sect:prelim}
	\hspace{1.5em}In this section, we will introduce some basic definitions, notations and related results about Lie conformal superalgebras and their modules.
	
	Let $V$ be a superspace that is a \bZz-graded linear space with a direct sum $V=V_\ep \oplus V_\op$. If $x\in V_\theta$, $\theta\in\bZz=\{\ep,\op\}$, then we say $x$ is \textit{homogeneous} and of \textit{parity $\theta$}. The parity of a homogeneous element $x$ is denoted by $|x|$. Throughout what follows, if $|x|$ occurs in an expression, then it is assumed that $x$ is homogeneous and that the expression extends to the other elements by linearity. 
	
	\subsection{Lie conformal superalgebra}
	\begin{definition}\label{def:lie conformal superalgebra}
		A \textit{Lie conformal superalgebra} $R=R_\ep \oplus R_\op$ is a \bZz-graded $\cp$-module with a $\bC$-linear map $R\otimes R\rightarrow \bC[\lambda]\otimes R$, $a\otimes b\mapsto [a_\lambda b]$ called $\lambda$-bracket, and satisfying the following axioms $(a,b,c\in R)$:
		\begin{align}
			\text{(conformal sesquilinearity)}&\phantom{1234} [\partial a_\lambda b]=-\lambda[a_\lambda b],\ [a_\lambda \partial b]=(\partial+\lambda)[a_\lambda b],\label{conformal-sesqui}\\
			\text{(skew-symmetry)}&\phantom{1234} [a_\lambda b]=-(-1)^{|a||b|}[b_{-\lambda-\partial}a],\label{skew-symm}\\
			\text{(Jacobi identity)}&\phantom{1234} [a_\lambda[b_\mu c]]=[[a_\lambda b]_{\lambda+\mu}c]+(-1)^{|a||b|}[b_\mu[a_\lambda c]].\label{jocobi-identity}
		\end{align}
	\end{definition}
	
	For a Lie conformal superalgebra $R=R_\ep\oplus R_\op$ and for any $\theta,\xi\in\{\ep,\op\}$, $R_\theta R_\xi\subseteq R_{\theta+\xi}$. $R$ is called \textit{finite} if it is finitely generated over $\bC[\partial]$. If it is further a free $\bC[\partial]$-module, then the \emph{rank} of $R$ is defined as the rank of the underlying free $\bC[\partial]$-module. Sometimes the rank is written as the sum of two numbers which means the sum of the rank of $R_\ep$ and the rank of $R_\op$.
	
	\begin{definition}\label{homo}
		Let $R, \bar{R}$ be two Lie conformal superalgebras. A graded-preserved linear map $\sigma:R\rightarrow \bar{R}$ is a \textit{homomorphism} of Lie conformal superalgebras if $\sigma$ satisfies $\sigma\partial=\partial\sigma$ and $\sigma([a_\lambda b])=[\sigma(a)_\lambda\sigma(b)]$, for any $a,b\in R$. The homomorphism
		is called an \textit{isomorphism} if it is further a bijection. For the Lie conformal superalgebra $R$, the set of all isomorphisms $R\rightarrow R$ is called \textit{automorphism group} under the composition of isomorphisms and denoted by $\Aut(R)$.
	\end{definition}
	
	The notions of subalgebra, ideal and abelian ideal of a Lie conformal superalgebra are defined in the usual way.
	
	\begin{example}\label{exa:NS}\textup{(\cite{CK}) (Neveu--Schwarz Lie conformal superalgebra)} Let $\mathcal{NS}=\bC[\partial]L\oplus \bC[\partial]G$ be a free \bZz-graded $\bC[\partial]$-module. Define
		\begin{align*}
			[L_\lambda L]=(\partial+2\lambda)L,\ \ [L_\lambda G]=(\partial+\frac32\lambda)G,\ \ [G_\lambda G]=2L,
		\end{align*}
		where $\mathcal{NS}_\ep=\bC[\partial]L$ and $\mathcal{NS}_\op=\bC[\partial]G$. Then $\mathcal{NS}$ is a Lie conformal superalgebra of rank $(1+1)$. We call it the Neveu--Schwarz Lie conformal superalgebra.
	\end{example}
	
	As a matter of fact, if we take $|a|=|b|=\ep$ in Definition~\ref{def:lie conformal superalgebra}, we get the definition of the usual Lie conformal algebra. In other words, for a lie conformal superalgebra $R=R_\ep\oplus R_\op$, $R_\ep$ is a usual Lie conformal algebra. Besides the Virasoro Lie conformal algebra, we give another important example.
	\begin{example}\label{HV}
		The Heisenberg--Virasoro Lie conformal algebra denoted by $\mathcal{HV}$ is a rank two Lie conformal algebra, which has a free $\bC[\partial]$-basis $\{L,H\}$ such that\textup{:}
		\begin{align*}
			[L_\lambda L]=(\partial+2\lambda)L,\ \ [L_\lambda H]=(\partial+\lambda)H,\ \ [H_\lambda H]=0.
		\end{align*}
		It was firstly introduced in \textup{\cite{SY}}, where its structure was discussed.
	\end{example}
	
	Suppose $R$ is a Lie conformal superalgebra. For any $a,b\in R$, we write 
	\begin{align}\label{jth prdouct}
		[a_\lambda b]=\sum_{j\in\bZ_+}\lambda^{(j)}a_{(j)}b\ \text{ with }\ \lambda^{(j)}=\frac{\lambda^j}{j!}.
	\end{align}
	For every $j\in\bZ_+$, we have the \bC-linear map: $R\otimes R\rightarrow R,\ a\otimes b\mapsto a_{(j)}b$, which is called the $j$-\textit{th product}. The \textit{derived algebra} of a Lie conformal superalgebra $R$ is the vector space $R'=\Span_\bC\{a_{(n)}b\,|\,a,b\in R,\ n\in\bZ_+\}$. It is easy to check that $R'$ is an ideal of $R$. Define $R^{(1)}=R'$ and $R^{(n+1)}=R^{(n)'}$ for any $n\ge 1$. Then we get the \textit{derived series}
	\begin{align*}
		R=R^{(0)}\supseteq R^{(1)}\supseteq\cdots\supseteq R^{(n)}\supseteq R^{(n+1)}\supseteq\cdots
	\end{align*}
	of $R$. If there exists $N\in\bZ_+$ such that $R^{(n)}=0$ for any $n\ge N$, then $R$ is said to be \textit{solvable}. Let $R^n=\Span_\bC\{a_{(m_1)}^1a_{(m_2)}^2\cdots a_{(m_{n-1})}^{n-1}a^n\,|\,a^i\in R,\ m_i\in\bZ_+\}$, then we have $R^{(n)}\subseteq R^n$ and the following series of ideals
	\begin{align*}
		R=R^{0}\supseteq R^{1}\supseteq\cdots \supseteq R^n\supseteq R^{n+1}\supseteq\cdots.
	\end{align*}
	If there exists $N\in\bZ_+$ such that $R^{n}=0$ for any $n\ge N$, then $R$ is said to be \textit{nilpotent}.
	
	A Lie conformal superalgebra is \textit{simple} if it is non-abelian and contains no nontrivial ideals. If one Lie conformal superalgebra has no nonzero abelian ideals, then it is \textit{semisimple}. Since the second last term of the derived series of a solvable Lie conformal superalgebra is an abelian ideal, this is equivalent to saying that a semisimple Lie conformal superalgebra has no nonzero solvable ideals. 
	
	Let $R=\bigoplus_{j=1}^n\cp X^j$ be a \bZz-graded \cp-module with $|X^j|$ denoted by $|j|$, and let $[X^i_\lambda X^j]=\sum_k Q_{ij}^{k}(\partial,\lambda)X^k$, where $Q_{ij}^{k}(\partial,\lambda)\in\bC[\partial,\lambda]$, $i,j,k=1,\dots,n$. These $\lambda$-brackets give rise to a structure of a Lie conformal superalgebra if $Q_{ij}^{k}(\partial,\lambda)$ satisfies the following relations which are equivalent to axioms \eqref{skew-symm} and \eqref{jocobi-identity} respectively (cf. \cite{K98}):
	\begin{small}
		\begin{align}
			Q_{ij}^{k}(\partial,\lambda)&=-(-1)^{|i||j|}Q_{ji}^{k}(\partial,-\partial-\lambda),\label{K.1}\\
			\sum_{s=1}^n\big(Q_{jk}^{s}(\partial+\lambda,\mu)Q_{is}^{t}(\partial,\lambda)-(-1)^{|i||j|}Q_{ik}^{s}&(\partial+\mu,\lambda)Q_{js}^{t}(\partial,\mu)\big)=\sum_{s=1}^nQ_{ij}^{s}(-\lambda-\mu,\lambda)Q_{sk}^{t}(\partial,\lambda+\mu).\label{K.2}
		\end{align}
	\end{small}

	Therefore, the description of a Lie conformal superalgebra with free \cp-modules of finite rank can be reduced to the solution of a finite system of polynomial equations in two indeterminates. 
	\begin{definition}
		A polynomial $f(x,y)\in\bC[x,y]$ is said to be \textit{skew-symmetric} if it satisfies the equality
		$$f(x,y)=-f(x,-x-y).$$
	\end{definition}
	By \eqref{K.1}, $Q_{ii}^{k}(\partial,\lambda)$ are all skew-symmetric when $|i|=0$, which play important roles in the classification of Lie conformal algebras (cf. \cite{BCH}). However, if $|i|=1$, we have $Q_{ii}^k(\partial,\lambda)=Q_{ii}^k(\partial,-\partial-\lambda)$. It implies that $Q_{ii}^k(\partial,\lambda)$ is independent of the second variable, which is very useful for our later discussion.

	\subsection{Conformal module}
	\begin{definition}\label{def:conformal module}
		A \textit{conformal module} $M=M_\ep+M_\op$ over a Lie conformal superalgebra $R$ is a \bZz-graded \cp-module equipped with a \bC-linear map $R\otimes M\rightarrow\bC[\lambda]\otimes M$, $a\otimes v\mapsto a_\lambda v$ called $\lambda$-action, and satisfying the following relations $(a,b\in R, v\in M)$:
		\begin{align}
			(\partial a)_\lambda v=-\lambda a_\lambda v,\ a_\lambda(\partial v)=(\partial+\lambda)a_\lambda v,\label{conmod.1}\\
			[a_\lambda b]_{\lambda+\mu}v=a_\lambda(b_\mu v)-(-1)^{|a||b|}b_\mu(a_\lambda v).\label{conmod.2}
		\end{align}
	\end{definition}
	Let $R$ be a Lie conformal superalgebra. A conformal $R$-module $M=M_\ep+M_\op$ is \textit{finite} if it is finitely generated as a \cp-module. We say that $M$ has \textit{rank} $(m+n)$, if $M_\ep$ has rank $m$ and $M_\op$ has rank $n$ as \cp-modules. $M$ is called to be \textit{irreducible}, if it has no nontrivial proper submodule.
	
	Similarly, if we take $|a|=|b|=\ep$ in Definition~\ref{def:conformal module}, we get the definition of the conformal module over the usual Lie conformal algebra. Furthermore, $R_\op$ is an $R_\ep$-module, if we define the action $(\mathrm{ad}\ a)_\lambda:\, a\mapsto[a_\lambda b]$, for any $a\in R_\ep$, $b\in R_\op$. 
	
	Obviously, for a fixed complex number $\tau$, the \cp-module $\bC c_\tau$ with $\partial c_\tau=\tau c_\tau$, $R_\lambda c_\tau=0$ is a conformal $R$-module. We refer to $\bC c_\tau$ as the \textit{even} (resp., \textit{odd}) \textit{one-dimensional trivial module} if $|c_\tau|=\ep$ (resp., $|c_\tau|=\op$). They exhaust all trivial irreducible conformal $R$-modules. Thus, we only need to consider the nontrivial cases.
	
	Let $M$ be a conformal $R$-module. An element $m\in M$ is called a \textit{torsion element} if there exists a nonzero polynomial $p(\partial)\in\cp$ such that $p(\partial)m=0$. From \cite{DK,FKR}, we have the following lemma.
	\begin{lemma}\label{lem:free.finite.rank.mod}
		Let $R$ be a Lie conformal algebra and $M$ be a finite nontrivial irreducible conformal $R$-module. Then $M$ has no nonzero torsion elements and is free of finite rank as a $\cp$-module.
	\end{lemma}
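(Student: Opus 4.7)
The plan is to introduce the torsion subspace $T = \{m \in M : p(\partial) m = 0 \text{ for some nonzero } p \in \bC[\partial]\}$, prove it is a conformal $R$-submodule of $M$, and then invoke irreducibility of $M$ together with the structure theorem for finitely generated modules over the PID $\bC[\partial]$. The set $T$ is evidently a $\bC[\partial]$-submodule, so the only nontrivial point in step one is stability under the $\lambda$-action.

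For that I would use the identity $a_\lambda(p(\partial) m) = p(\partial+\lambda)\, a_\lambda m$, which is an immediate consequence of \eqref{conmod.1}. If $p(\partial)m = 0$, this forces $p(\partial+\lambda)\, a_\lambda m = 0$ in $M[\lambda]$. Writing $a_\lambda m = \sum_{j=0}^N \lambda^j m_j$ and specializing $\lambda$ at any $\lambda_0 \in \bC$ shows that $\sum_j \lambda_0^j m_j$ is annihilated by the nonzero polynomial $p(\partial+\lambda_0)$ and thus lies in $T$; picking $N+1$ distinct values of $\lambda_0$ and inverting the resulting Vandermonde system expresses each $m_j$ as a $\bC$-linear combination of elements of $T$, so $m_j \in T$ and $a_\lambda m \in T[\lambda]$.

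By irreducibility, $T = 0$ or $T = M$, and I expect the latter case to be the main obstacle since I have to argue it contradicts the nontriviality hypothesis. Assuming $T = M$, finite generation over $\bC[\partial]$ forces $M$ to be a finite-dimensional $\bC$-space, so $\partial$ admits a generalized eigenspace decomposition $M = \bigoplus_\tau M_\tau$. For $m \in M_\tau$ annihilated by $(\partial-\tau)^k$, the same trick yields $(\partial-\tau+\lambda)^k a_\lambda m = 0$; on each $M_\sigma$ the operator $(\partial-\tau+\lambda)^k = \bigl((\partial-\sigma) + (\lambda-\tau+\sigma)\bigr)^k$ is invertible except at the single value $\lambda = \tau-\sigma$, so the $M_\sigma$-component of every $\lambda$-coefficient of $a_\lambda m$ is a polynomial in $\lambda$ vanishing off one point, hence identically zero. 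This forces $R_\lambda M = 0$, so any $\partial$-eigenvector in $M$ spans a trivial one-dimensional submodule, contradicting irreducibility and nontriviality simultaneously.

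Hence $T = 0$, i.e., $M$ is torsion-free; being finitely generated over the PID $\bC[\partial]$, the structure theorem then immediately yields that $M$ is free of finite rank, completing the proof.
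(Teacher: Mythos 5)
Your argument is correct, but note that the paper does not actually prove this lemma --- it is quoted from the references [DK, FKR] --- so the comparison is with the standard argument there rather than with a proof in the text. Your route (torsion subspace $T$ is a conformal submodule via Vandermonde interpolation, then a generalized-eigenspace analysis to kill the case $T=M$) is sound at every step, but it is longer than necessary. The classical argument extracts more from the identity $p(\partial+\lambda)\,a_\lambda m=0$: writing $a_\lambda m=\sum_{j=0}^{N}\lambda^{j}m_j$ with $m_N\neq 0$ and $p$ of degree $d$ with leading coefficient $c\neq 0$, the coefficient of $\lambda^{N+d}$ in $p(\partial+\lambda)\,a_\lambda m$ is exactly $c\,m_N$, since all other contributions have strictly smaller $\lambda$-degree; hence $a_\lambda m=0$ outright for every torsion element $m$. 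This single observation replaces both of your technical steps: it shows at once that $T$ is a conformal submodule on which $R$ acts trivially, so $T=M$ immediately contradicts nontriviality and $0\neq T\subsetneq M$ contradicts irreducibility, and the structure theorem over the PID $\cp$ finishes as you say. What your version buys is a proof that never needs the sharper vanishing statement, at the cost of the finite-dimensionality reduction and the eigenspace computation; both are valid, but the leading-coefficient trick is the one you will find in the cited sources and is worth internalizing, as it recurs throughout the conformal-module literature (e.g.\ in the paper's own repeated use of ``$M$ is torsion-free, hence $X_\lambda v=0$'' arguments).
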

	
	Now, we introduce two important examples of nontrivial conformal module.
	
	\begin{proposition}\label{vir.mod}\textup{(\cite{CK})}
		For the Virasoro Lie conformal algebra $\mathrm{Vir}$, all the free nontrivial $\mathrm{Vir}$-modules of rank one over $\bC[\partial]$ are the following ones $(\Delta,a\in\bC)$\textup{:}
		\begin{align*}
			V_{\Delta,a}=\bC[\partial]v,\ \ L_\lambda v=(\partial+\Delta\lambda+a)v.
		\end{align*}
		The module $V_{\Delta,a}$ is irreducible if and only if $\Delta\neq 0$. The module $V_{0,a}$ contains a unique nontrivial submodule $(\partial+a)V_{0,\alpha}$ isomorphic to $V_{1,a}$. Moreover, the modules $V_{\Delta,a}$ with $\Delta\neq 0$ exhaust all finite  nontrivial irreducible $\mathrm{Vir}$-modules.
	\end{proposition}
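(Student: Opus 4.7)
The plan is to classify all rank-one module structures by solving a polynomial Jacobi equation, analyze the resulting modules to obtain irreducibility and the $\Delta=0$ exception, and then argue that any finite nontrivial irreducible $\Vir$-module must in fact be of rank one.

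For the classification, write $M=\cp v$ and $L_\lambda v=P(\partial,\lambda)v$ with $P\in\bC[\partial,\lambda]$. Feeding this into the Jacobi identity $[L_\lambda L]_{\lambda+\mu}v=L_\lambda(L_\mu v)-L_\mu(L_\lambda v)$ and simplifying the left-hand side via conformal sesquilinearity produces
\begin{equation*}
(\lambda-\mu)\,P(\partial,\lambda+\mu)=P(\partial+\lambda,\mu)\,P(\partial,\lambda)-P(\partial+\mu,\lambda)\,P(\partial,\mu).
\end{equation*}
Setting $\mu=0$ and cancelling the nonzero factor $P(\partial,\lambda)$ yields $P(\partial+\lambda,0)-P(\partial,0)=\lambda$, hence $P(\partial,0)=\partial+a$ for some $a\in\bC$. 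Writing $P=(\partial+a)+\lambda\,Q(\partial,\lambda)$, substituting back, and differentiating at $\mu=0$, then matching degrees, I expect to be forced into $Q\equiv\Delta\in\bC$, giving $P(\partial,\lambda)=\partial+\Delta\lambda+a$. A direct substitution confirms that every such $V_{\Delta,a}$ really is a module.

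For irreducibility and submodule structure, note that any $\cp$-submodule of the rank-one free module $V_{\Delta,a}$ has the form $f(\partial)\cp v$, and $\Vir$-stability is equivalent to $f(\partial)\mid f(\partial+\lambda)(\partial+\Delta\lambda+a)$ in $\bC[\partial,\lambda]$. Evaluating at each root $\alpha$ of $f$ shows $\alpha+\Delta\lambda+a\equiv 0$ in $\lambda$, which is impossible unless $\Delta=0$ and $\alpha=-a$. Hence for $\Delta\neq 0$ only constant $f$ works, proving $V_{\Delta,a}$ is irreducible. For $\Delta=0$ one gets $f(\partial)=c(\partial+a)^m$; inspecting divisibility to second order in $(\partial+a)$ rules out $m\geq 2$, leaving $(\partial+a)V_{0,a}$ as the unique proper nontrivial submodule. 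The $\cp$-linear map $V_{1,a}\to(\partial+a)V_{0,a}$ sending the standard generator of $V_{1,a}$ to $(\partial+a)v$ is easily checked to intertwine the $\lambda$-action, establishing the claimed isomorphism.

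The exhaust statement is the deepest step and the one I expect to form the main obstacle. By Lemma~\ref{lem:free.finite.rank.mod} any finite nontrivial irreducible $\Vir$-module $M$ is free of finite rank over $\cp$, so the task reduces to showing this rank is one. The plan is to pass to the annihilation Lie algebra of $\Vir$ (the positive part of the Witt algebra), exploit its natural grading to locate a ``quasi-highest-weight'' vector $v\in M$ satisfying $L_{(k)}v=0$ for all sufficiently large $k$, and verify that $\cp v$ is then a nonzero $\Vir$-submodule of $M$; irreducibility forces $M=\cp v$, reducing to the rank-one classification above. This last reduction relies on the general finite-rank module machinery developed in \cite{CK}.
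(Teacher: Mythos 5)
First, note that the paper does not prove this proposition at all: it is imported verbatim from \cite{CK} (see also \cite{DK}), so there is no in-paper argument to compare yours against. Your proposal is essentially a reconstruction of the standard Cheng--Kac/D'Andrea--Kac proof. The functional equation $(\lambda-\mu)P(\partial,\lambda+\mu)=P(\partial+\lambda,\mu)P(\partial,\lambda)-P(\partial+\mu,\lambda)P(\partial,\mu)$ is set up correctly, the $\mu=0$ specialization giving $P(\partial,0)=\partial+a$ is right, and your submodule analysis (every $\cp$-submodule is $f(\partial)\cp v$; evaluating the divisibility condition at a root $\alpha$ of $f$ forces $\Delta=0$ and $\alpha=-a$; multiplicity $\ge 2$ is excluded) is a clean and correct way to get both the irreducibility criterion and the unique submodule $(\partial+a)V_{0,a}\cong V_{1,a}$. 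The step you leave as ``I expect to be forced into $Q\equiv\Delta$'' is true but not carried out; for the record it follows quickly by comparing the coefficient of $\partial^{2n-1}$ (which forces $\deg_\partial P=1$), then the coefficient of $\partial$ (which forces the leading coefficient to be $1$), and finally the coefficient of $\mu^1$ in the constant term, which yields $\lambda a_0'(\lambda)=a_0(\lambda)-a$ and hence $a_0(\lambda)=a+\Delta\lambda$.

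The one place where your plan, as literally written, would fail is the exhaustion step. You propose to ``locate a vector $v\in M$ satisfying $L_{(k)}v=0$ for all sufficiently large $k$, and verify that $\cp v$ is then a nonzero $\Vir$-submodule.'' That property alone proves nothing: by Proposition~\ref{prop:conformal} \emph{every} element of a conformal module is annihilated by $L_{(k)}$ for $k\gg 0$, and $\cp v$ is a submodule only if $L_{(k)}v\in\cp v$ for \emph{all} $k\ge 0$, i.e.\ $L_\lambda v\in\bC[\lambda]\otimes\cp v$. The missing ingredient is exactly the mechanism of Lemma~\ref{CK.lemma}: take $N$ minimal with $M_N=\{v: \mathcal{L}_N v=0\}\neq 0$, use the lemma to see $M_N$ is finite-dimensional, observe that $\bC(\partial-L_{-1})\oplus(\mathcal{L}_0/\mathcal{L}_N)$ is a finite-dimensional solvable Lie algebra acting on $M_N$, and apply Lie's theorem to produce a \emph{common eigenvector} $v$ for $L_0,\dots,L_{N-1}$ on which $L_{-1}$ acts as $\partial$ plus a constant; only then is $\cp v$ a submodule, and irreducibility together with Lemma~\ref{lem:free.finite.rank.mod} gives $M=\cp v$ free of rank one. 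You do gesture at this by invoking the ``finite-rank module machinery of \cite{CK},'' which is legitimate (and is all the paper itself does), but the submodule claim should be stated in terms of the simultaneous-eigenvector property, not merely the eventual vanishing of the $L_{(k)}$-actions.
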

	
	\begin{proposition}\label{ns.mod}\textup{(\cite{CK,CKW})}  
		For the Neveu--Schwarz Lie conformal superalgebra $\mathcal{NS}$, the finite nontrivial irreducible conformal module over it is isomorphic to one of the following\textup{:} 
		
		\subno{1} $\bar{V}_{\Delta,a}=\cp v_\ep\oplus\cp v_\op$ with
		\begin{alignat*}{2}
			&L_\lambda v_\ep=(\partial+\Delta\lambda+a)v_\ep,\ \ &L_\lambda v_\op&=\big(\partial+(\Delta+\frac{1}{2})\lambda+a\big)v_\op,\\
			&G_\lambda v_\ep=v_\op, &G_\lambda v_\op&=(\partial+2\Delta\lambda+a)v_\ep,
		\end{alignat*}
		for some $\Delta\in\bCx$ and $a\in\bC$.
		
		\subno{2} $\bar{V}'_{\Delta',a'}=\cp v'_\ep\oplus\cp v'_\op$ with
		\begin{alignat*}{3}
			&L_\lambda v'_\ep=(\partial+\Delta'\lambda+a')v'_\ep, &L_\lambda v'_\op&=\big(\partial+(\Delta'-\frac{1}{2})\lambda+a'\big)v'_\op,\\
			&G_\lambda v'_\ep=\big(\partial+(2\Delta'-1)\lambda+a'\big)v'_\op,\ \ \  &G_\lambda v'_\op&=v'_\ep,
		\end{alignat*}
		for some $\frac{1}{2}\neq\Delta'\in\bC$ and $a'\in\bC$.
	\end{proposition}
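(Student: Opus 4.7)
The plan is to first show that an irreducible finite conformal module $M = M_\ep \oplus M_\op$ over $\mathcal{NS}$ is forced to have rank $(1+1)$, and then pin down the $L$- and $G$-actions by solving polynomial identities coming from conformal sesquilinearity and the Jacobi identity. I would begin by verifying that neither parity component vanishes. If $M_\op = 0$, then $G_\lambda M_\ep \subseteq \bC[\lambda]\otimes M_\op = 0$, so $G$ acts trivially; applying $[G_\lambda G]_{\lambda+\mu}v = G_\lambda G_\mu v + G_\mu G_\lambda v = 2L_{\lambda+\mu}v$ to any $v\in M_\ep$ then forces $L$ to act trivially as well, contradicting non-triviality (and similarly if $M_\ep=0$). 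By Lemma~\ref{lem:free.finite.rank.mod}, both $M_\ep$ and $M_\op$ are finite-rank free $\cp$-modules carrying a $\Vir$-action via $L$.

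The core reduction is to argue that both $M_\ep$ and $M_\op$ have rank one. I would work one parity at a time: pick an irreducible $\Vir$-subquotient of $M_\ep$, which by Proposition~\ref{vir.mod} has the form $V_{\Delta,a}$, and use the odd operators $G_\lambda$ to transport this back and forth between $M_\ep$ and $M_\op$. The identity $G_\lambda G_\mu + G_\mu G_\lambda = 2L_{\lambda+\mu}$ means that $G$ is, roughly, a ``square root'' of the Virasoro action, which forces rigidity: any proper $\cp$-submodule stable under $L$ and interchanged by $G$ would produce a proper $\mathcal{NS}$-submodule of $M$, contradicting irreducibility. Combined with the known fact that the only finite nontrivial irreducible $\Vir$-modules are rank-one $V_{\Delta,a}$ with $\Delta\ne 0$, and with the Cheng--Kac lemma (referenced in the introduction) to handle possible indecomposable extensions such as $V_{0,a}$, this should force $M_\ep$ and $M_\op$ each to be of rank one.

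Once rank $(1+1)$ is established, I would write $M_\ep = \cp v_\ep$ and $M_\op = \cp v_\op$, and set
\begin{equation*}
L_\lambda v_\theta = (\partial + \Delta_\theta\lambda + a_\theta)v_\theta,\quad G_\lambda v_\ep = f(\partial,\lambda)v_\op,\quad G_\lambda v_\op = g(\partial,\lambda)v_\ep,
\end{equation*}
for unknown polynomials $f,g$ and scalars $\Delta_\ep,\Delta_\op,a_\ep,a_\op\in\bC$. Substituting into the Jacobi identity for $[L_\lambda G]_{\lambda+\mu}v = (\tfrac12\lambda-\mu)G_{\lambda+\mu}v$ applied to $v_\ep$ and $v_\op$ yields functional equations forcing $a_\ep = a_\op$ (call it $a$), $\Delta_\op - \Delta_\ep \in \{\tfrac12,-\tfrac12\}$, and determining $f,g$ up to an overall rescaling of $v_\op$ as affine polynomials in $\partial$ and $\lambda$. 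The remaining constraint from $G_\lambda G_\mu + G_\mu G_\lambda = 2L_{\lambda+\mu}$ then rigidifies the leading coefficients, yielding exactly the two normal forms $\bar V_{\Delta,a}$ and $\bar V'_{\Delta',a'}$ after rescaling $v_\op$ by a nonzero scalar.

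The main obstacle will be Step~2, the rank $(1+1)$ reduction, because finite $\Vir$-modules are not in general semisimple; one must rule out the possibility that $M_\ep$ or $M_\op$ contains a $\Vir$-submodule of type $V_{1,a}\hookrightarrow V_{0,a}$ that is preserved by the full $\mathcal{NS}$-action. I expect this to require combining the anticommutator relation (which pushes $L$-eigendata from $M_\ep$ to $M_\op$ and back) with a degree count on $f$ and $g$ to contradict any hypothesized proper submodule. The final irreducibility clauses $\Delta\ne 0$ in (1) and $\Delta'\ne\tfrac12$ in (2) would be verified by directly constructing the proper submodule $(\partial + a)\cp v_\ep \oplus \cp v_\op$ (or its analog) in the excluded parameter values.
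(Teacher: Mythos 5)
First, note that the paper does not prove Proposition~\ref{ns.mod} at all: it is quoted from \cite{CK,CKW} as a known result, so there is no in-paper proof to match. The closest internal analogue of what a proof must do is the paper's own treatment of $\mathcal{D}_2$ and $\mathcal{HVS}$ in Section~5.4.2, which is carried out via the extended annihilation superalgebra, the filtration $\mathcal{L}\supset\mathcal{L}_{-1}\supset\mathcal{L}_0\supset\cdots$, Lemma~\ref{CK.lemma}, and the super analogue of Lie's theorem. Measured against that template, your Steps~1 and~3 are sound: the vanishing argument for $M_{\op}=0$ via $[G_\lambda G]_{\lambda+\mu}v=G_\lambda(G_\mu v)+G_\mu(G_\lambda v)=2L_{\lambda+\mu}v$ is correct, the functional equations in Step~3 are routine and do produce the two normal forms, and your identification of the reducible boundary cases (the submodule $(\partial+a)\cp v_\ep\oplus\cp v_\op$ when $\Delta=0$, and its analogue $\cp v'_\ep\oplus(\partial+a')\cp v'_\op$ when $\Delta'=\tfrac12$) is exactly right.

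The genuine gap is Step~2, which you yourself flag as the main obstacle but for which you do not actually supply an argument. The statement that ``any proper $\cp$-submodule stable under $L$ and interchanged by $G$ would produce a proper $\mathcal{NS}$-submodule'' is true but vacuous for your purpose: it does not show that a higher-rank $M_\ep$ must contain such a submodule, and finite $\Vir$-modules are not semisimple, so $M_\ep$ could a priori be a non-split extension of several $V_{\Delta,a}$'s and trivial modules with no $G$-stable piece in sight. Your proposed ``degree count on $f$ and $g$'' is circular, since $f$ and $g$ are only defined after rank $(1+1)$ is assumed. A direct attack also fails at the level of the universal enveloping algebra: the anticommutator $G_{(i)}G_{(j)}+G_{(j)}G_{(i)}=2L_{(i+j)}$ controls only the symmetric part of quadratic expressions in the $G_{(i)}$, so the submodule generated by a $\Vir$-submodule $W\subseteq M_\ep$ need not have even part $\cp W$. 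Moreover, you misstate the role of Lemma~\ref{CK.lemma}: it is not a device for ``handling indecomposable extensions such as $V_{0,a}$'' but the tool that produces a finite-dimensional subspace $M_N$ with $M=\cp M_N$, after which one bounds $\dim M_N$ by representing a finite-dimensional quotient of the degree-zero subalgebra (using $X_0^2v=B_0v$-type relations, as in the paper's proof that the $\mathcal{HVS}$-modules have rank at most $(1+1)$, or the corresponding relation $G_{(0)}^2=L_{(0)}$ here). Without that mechanism, or an equivalent one, the rank reduction --- and hence the whole classification --- is not established.
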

	
	\begin{definition}\label{def:annalg}
		The \textit{annihilation superalgebra} $\Lie(R)^+$ of a Lie conformal superalgebra $R$ is a Lie superalgebra with \bC-basis $\{a_{(n)}\ |\ a\in R,\ n\in\bZ_+\}$ and relations
		\begin{align*}
			(\partial a)_{(n)}=-na_{(n-1)},\ \ (a+b)_{(n)}=a_{(n)}+b_{(n)},\ \ (ka)_{(n)}=ka_{(n)},
		\end{align*}
		for $a,b\in R$ and $k\in\bC$, and the Lie brackets of $\Lie(R)^+$ are given by
		\begin{equation}\label{ann.lie.bracket}
			[a_{(m)},b_{(n)}]=\sum_{j\in\bZ_+}\binom{m}{j}(a_{(j)}b)_{(m+n-j)}.
		\end{equation}
	\end{definition}
	
	The parity $|a_{(n)}|$ of $a_{(n)}\in\Lie(R)^+$ is the same as $|a|$ for any homogeneous $a\in R$ and $n\in\bZ_+$. The \textit{extended annihilation superalgebra} $\Lie(R)^e$ is defined by the semidirect sum
	\begin{equation}\label{extended.ann.lie}
		\Lie(R)^e=\bC\partial\ltimes\Lie(R)^+\ \text{ with }\ [\partial,a_{(n)}]=-na_{(n-1)}.
	\end{equation}

	\begin{example}
		For the Virasoro Lie conformal algebra $\Vir$, define $L_n=L_{(n+1)}$ for $n\ge -1$. Then $\Lie(\Vir)^+$ is a Lie algebra with basis $\{L_n\ |\ n\ge -1\}$ and relations
		\begin{align*}
			[L_m,L_n]=(m-n)L_{m+n},
		\end{align*}
		for $m,n\ge -1$. In addition, $\Lie(\Vir)^+$ is isomorphic to the Lie algebra of regular vector fields over \bC.
	\end{example}
	
	Similar to the definition of $j$-th product $a_{(j)}b$ of two elements $a,b\in R$, we can define $j$-\textit{th actions} of $R$ over $M$ for all $a\in R$, $v\in M$ by
	\begin{align}\label{jth.action}
		a_\lambda v=\sum_{j\in\bZ_+}\lambda^{(j)}a_{(j)}v\ \text{ with }\ \lambda^{(j)}=\frac{\lambda^j}{j!}.
	\end{align}
	
	If $M$ is a conformal $R$-module, by \eqref{jth.action}, it can be regarded as a module over $\Lie(R)^e$ naturally. Conversely, if for any $v$ belongs to the $\Lie(R)^e$-module $M$ and $a\in R$, there exists $N(a,v)\in\bZ_+$ such that $a_{(n)}v=0$ for $n>N(a,v)$, then $M$ can be viewed as an $R$-module. By abuse of notations, this module is said to be \textit{conformal}. We have the following conclusion:
	\begin{proposition}\label{prop:conformal}\textup{(\cite{CK})}
		A conformal module $M$ over a Lie conformal superalgebra $R$ is the same as a module over the Lie superalgebra $\Lie(R)^e$ satisfying $a_{(n)} v=0$ for $a\in R$, $v\in M$, $n\gg 0$, namely it is conformal.
	\end{proposition}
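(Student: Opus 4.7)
The plan is to prove the two directions of the correspondence using the coefficient-extraction formula \eqref{jth.action} as the dictionary between the $\lambda$-action and the family of $j$-th actions. The crucial observation is that membership of $a_\lambda v$ in $\bC[\lambda]\otimes M$ (rather than in a formal power series completion) is exactly equivalent to the conformality condition $a_{(n)}v=0$ for $n\gg 0$, so the statement reduces to a bookkeeping verification that the $j$-th actions satisfy the defining relations of $\Lie(R)^e$ if and only if the $\lambda$-action satisfies axioms \eqref{conmod.1} and \eqref{conmod.2}.

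For the forward direction, starting from a conformal $R$-module $M$, I would define $j$-th actions via \eqref{jth.action}. The $\bC$-linearity and additivity relations of Definition~\ref{def:annalg} are immediate. Extracting the coefficient of $\lambda^{(n)}$ in the first identity of \eqref{conmod.1} yields $(\partial a)_{(n)}v=-n\,a_{(n-1)}v$, which is the sesquilinearity relation in $\Lie(R)^+$. Extracting coefficients in the second identity of \eqref{conmod.1} gives $a_{(n)}(\partial v)-\partial(a_{(n)}v)=-n\,a_{(n-1)}v$, which is precisely the extended bracket \eqref{extended.ann.lie}. For the Lie bracket, I would expand both sides of \eqref{conmod.2} in powers of $\lambda$ and $\mu$ using \eqref{jth prdouct} and \eqref{jth.action}, then compare the coefficient of $\lambda^{(m)}\mu^{(n)}$ to recover \eqref{ann.lie.bracket}. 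Conformality follows automatically from polynomiality of $a_\lambda v$.

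The converse direction reverses these steps: given a $\Lie(R)^e$-module $M$ with $a_{(n)}v=0$ for $n\gg 0$, the formula \eqref{jth.action} defines a genuine element of $\bC[\lambda]\otimes M$, and the relations in $\Lie(R)^e$ translate back into axioms \eqref{conmod.1} and \eqref{conmod.2}. I expect the only nontrivial bookkeeping to lie in the coefficient match for the Jacobi/bracket equivalence: one must identify $\sum_{j}\binom{m}{j}(a_{(j)}b)_{(m+n-j)}v$ with the coefficient of $\lambda^{(m)}\mu^{(n)}$ in $[a_\lambda b]_{\lambda+\mu}v$, which reduces to the binomial expansion of $(\lambda+\mu)^{m+n-j}$ combined with the $\lambda^{(j)}$ factor coming from \eqref{jth prdouct}. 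Once this combinatorial identity is written down, together with the sign bookkeeping $(-1)^{|a||b|}$ carried over from \eqref{conmod.2} into the graded bracket of $\Lie(R)^+$, all remaining verifications are purely formal, and the proof is complete.
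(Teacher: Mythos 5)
Your proposal is correct: the coefficient-extraction dictionary between the $\lambda$-action and the $j$-th actions, together with the binomial identity $(\lambda+\mu)^{(k)}=\sum_{p+q=k}\lambda^{(p)}\mu^{(q)}$ used to match the bracket \eqref{ann.lie.bracket} against axiom \eqref{conmod.2}, is exactly the standard argument. The paper itself gives no proof of this proposition --- it is quoted from \cite{CK} --- and your sketch faithfully reproduces the argument of that reference, so there is nothing to compare beyond noting that your verification is complete and matches the source.
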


	\section{Lie conformal superalgebra of rank $(2+1)$}\label{sect:(2+1)}
	\hspace{1.5em}In this section, we will classify the Lie conformal superalgebras of rank $(2+1)$ completely. Let $R=R_\ep\oplus R_\op$ be a free Lie conformal superalgebra of rank $(2+1)$ in this section. The even part $R_\ep$ is a usual Lie conformal algebra of rank two, which has been classified completely in \cite{BCH}. We use $\CH$ to denote a free rank two Lie conformal algebra in the sequel. For convenience, we summarize the classification results into four types.
	\begin{proposition}\label{rank2.lie.con.alg}\textup{(\cite{BCH})} Let $\CH$ be a free rank two Lie conformal algebra. Then, up to isomorphism, $\CH$ is one of the following types.
		
		\subno{1} \textup{\textbf{Type A}}. If $\CH$ is solvable, then there is a basis $\{A,B\}$ such that
		\begin{align}\label{H.A}
			[B_\lambda B]=0,\ [A_\lambda B]=P_1(\partial,\lambda)B,\ [A_\lambda A]=Q_1(\partial,\lambda)B,
		\end{align}
		for some $P_1(\partial,\lambda),Q_1(\partial,\lambda)\in\bC[\partial,\lambda]$. More explicitly, if $\CH$ is nilpotent, then $P_1(\partial,\lambda)=0$ and $Q_1(\partial,\lambda)$ is any skew-symmetric polynomial. If $\CH$ is solvable but not nilpotent, then $P_1(\partial,\lambda)=p(\lambda)$ and $Q_1(\partial,\lambda)=0$, where $p(\lambda)$ is a nonzero polynomial.
		
		\subno{2} \textup{\textbf{Type B}}. If $\CH$ is the direct sum of two Virasoro Lie conformal algebras, then there exists a basis $\{A,B\}$ such that
		\begin{align}\label{H.B}
			[A_\lambda A]=(\partial+2\lambda)A,\ [B_\lambda B]=(\partial+2\lambda)B,\ [A_\lambda B]=0.
		\end{align}
		
		\subno{3} \textup{\textbf{Type C}}. If $\CH$ is the direct sum of the rank one commutative Lie conformal algebra and the Virasoro Lie conformal algebra, then there exists a basis $\{A,B\}$ such that
		\begin{align}\label{H.C}
			[A_\lambda A]=(\partial+2\lambda)A,\ [A_\lambda B]=[B_\lambda B]=0.
		\end{align}
		
		If $\CH$ is none of the above, then we have the following type.
		
		\subno{4} \textup{\textbf{Type D}}. $\CH$ has a basis $\{A,B\}$ with the relations\textup{:}
		\begin{equation}\label{H.D}
			\begin{split}
				[A_\lambda A]=(\partial+2\lambda)A+Q(\partial,\lambda)B,\ [A_\lambda B]=(\partial+a\lambda+b)B,\ [B_\lambda B]=0,
			\end{split}
		\end{equation}
		where $a,b\in\bC$ and $Q(\partial,\lambda)\in\bC[\partial,\lambda]$ satisfying \textup{\textbf{Condition 1}} \textup{(}abbr. \textup{\textbf{C1})}.
		
		\textup{\textbf{Condition 1}:} $Q(\partial,\lambda)=0$ if $b\neq 0$. Otherwise, we have
		
		\begin{table}[htpb]
			\centering
			\begin{tabular}{|c|c|}    
				\hline
				\phantom{1}$a\in\bC$\phantom{1} & \phantom{01234567890}$Q(\partial,\lambda),c, d\in\bC$\phantom{01234567890}\\
				\hline
				$1$ & $c(\partial+2\lambda)$ \\
				\hline
				$0$ & $(\partial+2\lambda)\big(c(\partial\lambda+\lambda^2)+d\partial\big)$\\
				\hline
				$-1$ & $(\partial+2\lambda)\big(c\partial^2+d(\partial\lambda+\lambda^2)\partial\big)$\\
				\hline
				$-4$ & $c(\partial+2\lambda)(\partial\lambda+\lambda^2)^3$\\
				\hline
				$-6$ & $c(\partial+2\lambda)\big(11(\partial\lambda+\lambda^2)^4+2(\partial\lambda+\lambda^2)^3\partial^2\big)$\\
				\hline
			\end{tabular}
		\end{table}
	\end{proposition}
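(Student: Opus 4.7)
The plan is to set up the most general form of the brackets compatible with \eqref{K.1} and then impose the Jacobi identity \eqref{K.2} case-by-case. Write
\[
[A_\lambda A]=\alpha(\partial,\lambda)A+\beta(\partial,\lambda)B,\ \ [A_\lambda B]=\gamma(\partial,\lambda)A+\delta(\partial,\lambda)B,\ \ [B_\lambda B]=\mu(\partial,\lambda)A+\nu(\partial,\lambda)B,
\]
where $\alpha,\beta,\mu,\nu$ are skew-symmetric polynomials by \eqref{K.1}, and $[B_\lambda A]$ is forced by skew-symmetry. The first reduction is a dichotomy on the derived algebra $\CH'$: either $\CH$ is solvable, or a nontrivial semisimple quotient appears. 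In the latter situation, rank one combined with the classification of \cite{DK} forces a Virasoro generator in an appropriate quotient, which after a $\bC[\partial]$-linear change of basis can be lifted to $A$ with $\alpha(\partial,\lambda)=\partial+2\lambda$.

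For the solvable case (Type A), one first observes that $\mathrm{Rad}\, \CH=\CH$, so the image of $\ad A$ restricted to $\bC[\partial]B$ is scalar-valued in $B$; the Jacobi identity $[A_\lambda[A_\mu B]]=[A_\mu[A_\lambda B]]+[[A_\lambda A]_{\lambda+\mu}B]$ plus skew-symmetry then forces $\gamma=0$ and $P_1=P_1(\lambda)$ (independent of $\partial$), while the remaining data is a skew-symmetric polynomial $Q_1$. A short argument distinguishing nilpotent from merely solvable — noting that nilpotency kills $P_1$, and that when $P_1\ne 0$ one can rescale $A$ to absorb any $Q_1$ contribution — gives exactly the two branches stated.

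In the non-solvable situation, Proposition~\ref{vir.mod} applied to $\bC[\partial]B$ viewed as a $\Vir$-module (via the Virasoro subalgebra $\bC[\partial]A$ modulo $B$-corrections) describes $[A_\lambda B]$ up to the usual parameters: either $\bC[\partial]B$ is a trivial Virasoro module (yielding Type C and the decomposable Type B after absorbing possible $[A_\lambda A]$-correction into the second Virasoro), or $[A_\lambda B]=(\partial+a\lambda+b)B$ for parameters $a,b\in\bC$ (Type D). The three families B, C, D are then separated by whether one can or cannot untangle $[A_\lambda A]$ into a direct sum: a $\bC[\partial]$-affine change $A\mapsto A+f(\partial)B$ removes $Q(\partial,\lambda)$ precisely when $b\ne0$ (producing Type B or C), which justifies restricting $Q$ to the $b=0$ regime.

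The main obstacle is determining the admissible pairs $(a,Q(\partial,\lambda))$ in Type D with $b=0$. Substituting the ansatz into the Jacobi identity for the triple $(A,A,A)$ yields a single polynomial identity of the form
\[
Q(\partial+\mu,\lambda)(\partial+(2-a)\lambda-a\mu)-Q(\partial+\lambda,\mu)(\partial+(2-a)\mu-a\lambda)+(\lambda-\mu)Q(\partial,\lambda+\mu)=0,
\]
whose solution space depends discontinuously on $a$. The hard combinatorial step is to show that for generic $a$ only $Q=0$ survives, and that nonzero solutions exist exactly for $a\in\{1,0,-1,-4,-6\}$ with the five explicit shapes listed in the table. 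One proves this by expanding $Q$ as $(\partial+2\lambda)\tilde Q(\partial,\lambda)$ (forced by skew-symmetry) and degree-induction on $\tilde Q$, reducing to a recursion on coefficients that has a finite-dimensional solution space precisely at the five exceptional values of $a$; this is essentially the polynomial-equation analysis carried out in \cite{BCH}, which is both the technical heart and the longest portion of the proof.
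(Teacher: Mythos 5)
This proposition is not proved in the paper at all: it is imported verbatim from \cite{BCH}, and the citation is the entirety of the paper's ``proof,'' so there is no internal argument to compare yours against. Judged on its own, your outline does follow the architecture one would expect (and that \cite{BCH} uses): the solvable/non-solvable dichotomy, production of a Virasoro generator, reduction of the residual freedom to a polynomial cocycle equation, and the correct observation that skew-symmetry forces $(\partial+2\lambda)\mid Q$. But the proposal is circular exactly where the proposition has content. The whole substance of Type D is \textbf{Condition 1} --- that a nonzero $Q$ survives only for $a\in\{1,0,-1,-4,-6\}$ and then only with the five listed shapes --- and this is precisely the step you hand back to \cite{BCH} (``this is essentially the polynomial-equation analysis carried out in \cite{BCH}''). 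Deferring the table to the reference being proved leaves the decisive claim unproved; the ``degree-induction / recursion on coefficients'' sentence is a description of what a proof would do, not a proof.

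There is also a concrete error in the one formula you do write down. The coefficient of $B$ in the $(A,A,A)$ Jacobi identity $[A_\lambda[A_\mu A]]=[[A_\lambda A]_{\lambda+\mu}A]+[A_\mu[A_\lambda A]]$ is
\begin{align*}
&(\partial+\lambda+2\mu)Q(\partial,\lambda)-(\partial+2\lambda+\mu)Q(\partial,\mu)+(\partial+a\lambda+b)Q(\partial+\lambda,\mu)\\
&\quad-(\partial+a\mu+b)Q(\partial+\mu,\lambda)-(\lambda-\mu)Q(\partial,\lambda+\mu)+\big((1-a)\partial-a(\lambda+\mu)+b\big)Q(-\lambda-\mu,\lambda)=0,
\end{align*}
whereas your three-term identity omits the terms $(\partial+\lambda+2\mu)Q(\partial,\lambda)-(\partial+2\lambda+\mu)Q(\partial,\mu)$ (coming from $A_\lambda$ acting on the $A$-component of $[A_\mu A]$) and the term proportional to $Q(-\lambda-\mu,\lambda)$ (coming from $[\,Q(\partial,\lambda)B\,{}_{\lambda+\mu}\,A]$); any recursion built on the truncated equation would not reproduce the table. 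Secondary gaps: in the solvable case you assert, rather than derive, that a basis exists with $[B_\lambda B]=0$ and with $[A_\lambda B]$ having no $A$-component; and the claim that the shift $A\mapsto A+f(\partial)B$ removes every Jacobi-admissible $Q$ exactly when $b\neq0$ is stated without the coboundary computation $Q\mapsto Q-(\partial+2\lambda)f(\partial)+(\partial+a\lambda+b)f(\partial+\lambda)-\big((1-a)\partial-a\lambda+b\big)f(-\lambda)$ that would justify it. These are repairable, but together with the deferred table they make the proposal an outline of the strategy of \cite{BCH} rather than a proof of the proposition.
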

	
	Furthermore, since the odd part $R_\op$ is an $R_\ep$-module (i.e. $\CH$-module), we also need the conclusions of the conformal modules over rank two Lie conformal algebras which have been shown in \cite{XHW}. 
	\begin{proposition}\label{rank2.lie.con.alg.mod}\textup{(\cite{XHW})} Let $\CH=\cp A\oplus\cp B$ be a rank two Lie conformal algebra and $M=\cp v$ be the nontrivial conformal $\CH$-module of rank one, then the action of $\CH$ on $M$ has to be the following cases corresponding to the above four types\textup{:}
		
		\subno{1} \textup{\textbf{Type A}}. If $\CH$ is solvable with the relation \eqref{H.A}, then we have 
		\begin{align}\label{H.A.mod}
			A_\lambda v=\phi_A(\lambda)v,\ B_\lambda v=\phi_B(\lambda)v,
		\end{align}
		where $\phi_A(\lambda),\phi_B(\lambda)\in\bC[\lambda]$ are not zero simultaneously. In addition, if $(P_1(\partial,\lambda),Q_1(\partial,\lambda))\neq(0,0)$, then $\phi_B(\lambda)=0$.
		
		\subno{2} \textup{\textbf{Type B}}. If $\CH$ is semisimple with the relation \eqref{H.B}, then either
		\begin{align}
			A_\lambda v=(\partial+\alpha_1\lambda+\beta_1)v,\ B_\lambda v=0,\ \text{for some }\alpha_1,\beta_1\in\bC,\label{H.B.mod}
			\shortintertext{or}
			A_\lambda v=0,\ B_\lambda v=(\partial+\alpha_2\lambda+\beta_2)v,\ \text{for some }\alpha_2,\beta_2\in\bC.\label{H.B.mod'}
		\end{align}
		
		\subno{3} \textup{\textbf{Type C}}. If $\CH$ satisfies the relation \eqref{H.C}, then either
		\begin{align}
			A_\lambda v=(\partial+\alpha\lambda+\beta)v,\ B_\lambda v=0,\ \text{for some }\alpha,\beta\in\bC,\label{H.C.mod.1}
			\shortintertext{or}
			A_\lambda v=0,\ B_\lambda v=\phi(\lambda)v,\ \text{for some nonzero }\phi(\lambda)\in\bC[\lambda].\label{H.C.mod.2}
		\end{align}
		
		\subno{4} \textup{\textbf{Type D}}. If $\CH$ satisfies the relation \eqref{H.D}, then
		\begin{align}\label{H.D.mod}
			A_\lambda v=(\partial+\alpha\lambda+\beta)v,\ B_\lambda v=\gamma v,
		\end{align}
		where $\alpha,\beta,\gamma\in\bC$ such that if $(a,b,Q(\partial,\lambda))\neq(1,0,0)$, namely $\mathcal{H}$ is not the Heisenberg--Virasoro Lie conformal algebra, then $\gamma=0$.
	\end{proposition}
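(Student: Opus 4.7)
Since $M = \cp v$ is given of rank one, set $A_\lambda v = f_A(\partial, \lambda) v$ and $B_\lambda v = f_B(\partial, \lambda) v$ for some $f_A, f_B \in \bC[\partial, \lambda]$, with $(f_A, f_B) \neq (0, 0)$ by nontriviality. The module Jacobi identity \eqref{conmod.2} applied to each pair $(X, Y) \in \{A, B\}^2$ converts the $\lambda$-bracket relations of $\CH$ listed in Proposition~\ref{rank2.lie.con.alg} into polynomial identities in $\bC[\partial, \lambda, \mu]$ for $f_A, f_B$. The plan is to solve these systems type by type.

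\textbf{Types B and C.} In both types, $[A_\lambda A] = (\partial + 2\lambda)A$ yields the standard Virasoro module equation for $f_A$, so by Proposition~\ref{vir.mod} either $f_A = 0$ or $f_A = \partial + \alpha\lambda + \beta$. In Type B, the same alternative holds for $f_B$; the constraint from $[A_\lambda B] = 0$ is
\begin{equation*}
f_A(\partial+\mu, \lambda) f_B(\partial, \mu) = f_B(\partial+\lambda, \mu) f_A(\partial, \lambda),
\end{equation*}
and substituting both Virasoro forms produces a residue with a nonzero $(\mu - \lambda)\partial$ term, so at least one of $f_A, f_B$ must vanish, giving \eqref{H.B.mod}--\eqref{H.B.mod'}. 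For Type C, the identity $[B_\lambda B] = 0$ combined with the displayed cross-identity forces $f_B$ to be $\partial$-independent (a top-$\partial$-degree argument extracting $\phi_k(\lambda)\phi_k(\mu)(\lambda - \mu)$ for the leading coefficient $\phi_k$), yielding \eqref{H.C.mod.1}--\eqref{H.C.mod.2}.

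\textbf{Type A.} In the nilpotent subcase ($P_1 = 0$, $Q_1$ skew-symmetric), the equation from $[B_\lambda B] = 0$ forces $f_B \in \bC[\lambda]$ by the same top-degree argument, and $[A_\lambda B] = 0$ then forces $f_A \in \bC[\lambda]$ whenever $f_B \neq 0$ (since $\phi_B(\mu)(f_A(\partial+\mu, \lambda) - f_A(\partial, \lambda)) = 0$). If moreover $Q_1 \neq 0$, the identity coming from $[A_\lambda A] = Q_1(\partial, \lambda) B$ reduces, after substituting $f_A \in \bC[\lambda]$, to $Q_1(-\lambda-\mu, \lambda) f_B(\lambda+\mu) = 0$, forcing $f_B = 0$. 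In the solvable non-nilpotent subcase ($P_1 = p(\lambda) \neq 0$, $Q_1 = 0$), the identity from $[A_\lambda B] = p(\lambda) B$,
\begin{equation*}
p(\lambda) f_B(\partial, \lambda+\mu) = f_A(\partial+\mu, \lambda) f_B(\partial, \mu) - f_B(\partial+\lambda, \mu) f_A(\partial, \lambda),
\end{equation*}
forces $f_B = 0$ by a $\mu$-degree comparison.

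\textbf{Type D.} The coupled system from $[A_\lambda A]$ and $[A_\lambda B]$ reads
\begin{align*}
(-2\lambda - \mu) f_A(\partial, \lambda+\mu) + Q(-\lambda-\mu, \lambda) f_B(\partial, \lambda+\mu) &= f_A(\partial+\lambda, \mu) f_A(\partial, \lambda) - f_A(\partial+\mu, \lambda) f_A(\partial, \mu), \\
(-\lambda - \mu + a\lambda + b) f_B(\partial, \lambda+\mu) &= f_A(\partial+\mu, \lambda) f_B(\partial, \mu) - f_B(\partial+\lambda, \mu) f_A(\partial, \lambda).
\end{align*}
One first shows $f_A = \partial + \alpha\lambda + \beta$ via the Virasoro-dominant part of the first equation (ruling out $f_A = 0$, which would force $f_B = 0$ as well via the second equation). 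Then, bounding the $\partial$- and $\lambda$-degrees of $f_B$ through the second equation reduces $f_B$ to a constant $\gamma \in \bC$. Substituting back, the coupling terms $Q(-\lambda-\mu, \lambda)\gamma$ in the first equation and $((a-1)\lambda + b)\gamma$ in the second simultaneously vanish only when either $\gamma = 0$ or $(a, b, Q) = (1, 0, 0)$, i.e., unless $\CH$ is the Heisenberg--Virasoro algebra. The hardest part is precisely this last step: the auxiliary polynomial $Q(\partial, \lambda)$ entangles $f_A$ and $f_B$, so the degree bounds on $f_B$ must be carried out with care, conditioning on the five parameter regimes catalogued in \textbf{Condition~1}, to guarantee that no nonconstant or $\partial$-dependent $f_B$ can slip through.
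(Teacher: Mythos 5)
This proposition is imported verbatim from \cite{XHW}; the paper offers no proof of its own, so there is nothing internal to compare against. Your strategy --- writing $A_\lambda v=f_A(\partial,\lambda)v$, $B_\lambda v=f_B(\partial,\lambda)v$, converting \eqref{conmod.2} into polynomial identities, and solving type by type using Proposition~\ref{vir.mod} for the Virasoro-type generators and a leading-$\partial$-coefficient extraction for the commuting ones --- is the standard route and is in substance what the cited source does. Your Type B, Type C and Type A arguments are sound as sketched: the $\partial^{2n-1}$ coefficient of the $[B_\lambda B]=0$ identity does produce $n(\lambda-\mu)\phi_n(\lambda)\phi_n(\mu)$ and forces $f_B$ to be $\partial$-free, the $(\mu-\lambda)\partial$ residue kills the doubly nontrivial case in Type B, and the nilpotent/non-nilpotent split in Type A works (though in the branch $f_B=0$, $Q_1\neq0$ you should still note that $f_A\in\bC[\lambda]$ follows from the now-homogeneous $[A_\lambda A]$ identity by the same top-degree argument).

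Two concrete corrections are needed in Type D. First, your displayed system is wrong: under $[\,\cdot\,]_{\lambda+\mu}$ one substitutes $\partial\mapsto-\lambda-\mu$, so the coefficient of $f_A(\partial,\lambda+\mu)$ coming from $(\partial+2\lambda)A$ is $\lambda-\mu$, not $-2\lambda-\mu$; indeed the right-hand side is antisymmetric under $\lambda\leftrightarrow\mu$, which $-2\lambda-\mu$ cannot match, and the appeal to the Virasoro classification requires the correct coefficient. (Relatedly, your right-hand sides throughout are written as $B_\mu(A_\lambda v)-A_\lambda(B_\mu v)$ rather than in the order of \eqref{conmod.2}; this global sign is harmless but should be fixed.) Second, the stated order of the argument does not close: you cannot ``first show $f_A=\partial+\alpha\lambda+\beta$ from the Virasoro-dominant part of the first equation,'' because that equation carries the coupling term $Q(-\lambda-\mu,\lambda)f_B(\partial,\lambda+\mu)$. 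The clean path is to begin with $[B_\lambda B]=0$, which you omit in Type D, to get $f_B=\phi_B(\lambda)$; then the $[A_\lambda B]$ identity becomes $\big((a-1)\lambda-\mu+b\big)\phi_B(\lambda+\mu)=\phi_B(\mu)\big(f_A(\partial,\lambda)-f_A(\partial+\mu,\lambda)\big)$, and if $\phi_B\neq0$ a $\partial$-degree comparison gives $\deg_\partial f_A=1$, the substitution $\mu=0$ forces $(a,b)=(1,0)$, and the resulting functional equation makes $\phi_B$ a constant $\gamma$ and $f_A=\partial+a_0(\lambda)$. Only now does the $[A_\lambda A]$ identity decouple; since Condition~1 with $(a,b)=(1,0)$ gives $Q=c(\partial+2\lambda)$, a single substitution yields $c\gamma=0$, hence $Q=0$ and $a_0(\lambda)=\alpha\lambda+\beta$. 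In particular there is no need to condition on all five parameter regimes of Condition~1. With these repairs your proof is complete and correct.
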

	
	The irreducibility of the conformal modules constructed in Proposition~\ref{rank2.lie.con.alg.mod} are also discussed in \cite{XHW}. To be concise, we only present the partial conclusions as follows:
	\begin{proposition}\label{rank2.lie.rank1.mod}\textup{(\cite{XHW})}
		Any finite nontrivial irreducible conformal modules over the rank two Lie conformal algebras is free of rank one.
	\end{proposition}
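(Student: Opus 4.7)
The plan is to follow the classification of $\CH$ from Proposition~\ref{rank2.lie.con.alg} and treat the four types separately. By Lemma~\ref{lem:free.finite.rank.mod}, any finite nontrivial irreducible conformal module $M$ over $\CH$ is a free $\cp$-module of some finite rank $n\ge 1$, and the goal is to prove $n=1$. The uniform strategy will be to identify an $\cp$-submodule of rank strictly less than $n$ that is stable under $\CH$, contradicting irreducibility whenever $n\ge 2$.

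For \textbf{Type A} (solvable $\CH$), I would apply the conformal analogue of Lie's theorem: in a finite irreducible conformal module over a solvable Lie conformal algebra, the derived subalgebra, here generated by $B$, must act by scalar polynomials in $\lambda$. Using this, one produces a common $\lambda$-eigenline generating a rank-one $\cp$-submodule stable under both $A$ and $B$, forcing $n=1$ by irreducibility.

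For \textbf{Types B, C, D}, the element $A$ generates a Virasoro subalgebra. I would restrict $M$ to $\cp A\cong\Vir$ and invoke Proposition~\ref{vir.mod} to exhibit a filtration of $M$ whose subquotients are of the form $V_{\Delta,a}$. Then I would exploit the $A$-$B$ compatibility coming from the Jacobi identity to track how $B$ moves between the filtration pieces: in \textbf{Type B} the vanishing $[A_\lambda B]=0$ forces $B$ to intertwine the $\Vir$-subquotients; in \textbf{Type C} the centrality of $B$ makes it act as a polynomial scalar on each piece; in \textbf{Type D} the primary-field relation $[A_\lambda B]=(\partial+a\lambda+b)B$ pins down the weight shift induced by $B$ between pieces. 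Combined with the constraint $[B_\lambda B]=0$ (or $(\partial+2\lambda)B$ in Type B), one can isolate a proper $\CH$-stable $\cp$-submodule whenever $n\ge 2$.

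\textbf{Main obstacle.} The hardest case will be \textbf{Type D}, especially those subcases in Condition~C1 where $Q(\partial,\lambda)\ne 0$. In those subcases $A$ no longer acts as a pure Virasoro generator on $M$, so the structure theory of $\Vir$-modules cannot be imported directly. One has to unwind the Jacobi identity $[A_\lambda[A_\mu v]]-[A_\mu[A_\lambda v]]=[[A_\lambda A]_{\lambda+\mu}v]$ to extract the higher $\partial$-order data encoded in $Q(\partial,\lambda)$ and marry it with the $B$-action in order to rule out every rank $\ge 2$ possibility. Doing this uniformly across the five subcases of Condition~C1, while keeping the parameters $(a,b)$ and the coefficients $c,d$ general, is the technical crux of the argument.
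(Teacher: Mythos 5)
First, note that the paper does not actually prove Proposition~\ref{rank2.lie.rank1.mod}: it is quoted from \cite{XHW} with a citation and no argument. So your proposal has to be measured against the argument of \cite{XHW}, whose method this paper reproduces in Section~5 for the super analogues. Your treatment of Type~A is essentially fine (and in fact immediate from Proposition~\ref{lem:con.Lie.thm}, whose last sentence already asserts rank one for solvable conformal algebras), and Type~B can be dispatched by the semisimplicity result of Cheng--Kac cited in the paper. The problem is that the decisive case --- Type~D, and in particular the subcases with $Q(\partial,\lambda)\neq 0$ --- is left in your write-up as an acknowledged ``technical crux'' with only a declaration of intent. That is a genuine gap, not a proof: without an a priori bound on the rank $n$ of $M$, ``unwinding the Jacobi identity to rule out every rank $\ge 2$ possibility'' is an open-ended computation over infinitely many candidate module structures, and the filtration-by-$V_{\Delta,a}$ bookkeeping you propose gives no mechanism for producing the proper $\CH$-stable submodule you need. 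The same objection applies, less severely, to your Type~C sketch, where ``the centrality of $B$ makes it act as a polynomial scalar on each piece'' is asserted rather than derived.

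The argument that actually closes Type~D (and which this paper itself runs in Section~5.4 for $\bar{\mathcal{D}}$ and $\mathcal{D}_2$) does not work at the level of the conformal module directly. One passes to the extended annihilation algebra $\Lie(\CH)^e$, introduces the descending filtration $\mathcal{L}_n$ as in Lemma~\ref{lem:D(1)} and Lemma~\ref{ck.conditions}, and applies Cheng--Kac's Lemma (Lemma~\ref{CK.lemma}) to obtain a nonzero \emph{finite-dimensional} subspace $M_N=\{v\in M\mid \mathcal{L}_N v=0\}$. The subalgebra $\mathcal{K}=\bC(\partial-A_{-1})\oplus\mathcal{L}_0\oplus\sum_i\bC A_i$ acting on $M_N$ factors through a finite-dimensional solvable Lie algebra, so Lie's theorem yields a one-dimensional invariant line $\bC v$; the $\cp$-span $\cp v$ is then a rank-one conformal submodule, and irreducibility forces $M=\cp v$. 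This is the step your proposal is missing, and it is precisely the step that absorbs the polynomial $Q(\partial,\lambda)$ harmlessly (it only shifts the indexing of the filtration by the total degree $d$). You should restructure the Type~C and Type~D cases around this annihilation-algebra technique rather than around Jacobi-identity manipulations on $M$.
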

	
	Now, let $R=\cp A\oplus\cp B\oplus\cp X$ and $R_\ep=\CH=\cp A\oplus\cp B$, $R_\op=\cp X$. By Definition~\ref{def:lie conformal superalgebra}, we can further assume
	\begin{align*}
		[A_\lambda X]=\phi(\partial,\lambda)X,\ [B_\lambda X]=\varphi(\partial,\lambda)X,\ 
		[X_\lambda X]=\psi_1(\partial,\lambda)A+\psi_2(\partial,\lambda)B,
	\end{align*}
	where $\phi(\partial,\lambda),\varphi(\partial,\lambda),\psi_1(\partial,\lambda)$ and $\psi_2(\partial,\lambda)$ are all in $\bC[\partial,\lambda]$. Due to the skew-symmetry in \eqref{skew-symm}, we have $[X_\lambda X]=[X_{-\partial-\lambda} X]$. It leads to $\psi_i(\partial,\lambda)=\psi_i(\partial,-\partial-\lambda)$, where $i=1,2$. Considering the degree in $\partial$ on the both sides of the two equalities respectively, we can get $\psi_i(\partial,\lambda)=\psi_i(\partial)$. It follows that 
	\begin{align}\label{R.rel.origin}
		[A_\lambda X]=\phi(\partial,\lambda)X,\ [B_\lambda X]=\varphi(\partial,\lambda)X,\ 
		[X_\lambda X]=\psi_1(\partial)A+\psi_2(\partial)B.
	\end{align}
	\begin{remark}\label{trivial case}(Trivial case) 
		If $R_\ep$ is a Lie conformal algebra of type $A,B,C$ or $D$ and $[A_\lambda X]=[B_\lambda X]=[X_\lambda X]=0$, then $R$ must be a Lie conformal superalgebra. 
	\end{remark}
	
	Before we give the classification of the Lie conformal superalgebras of rank $(2+1)$, we have to develop the following two lemmas about polynomials.
	\begin{lemma}\label{lem:f(x+y)g(x,y)=f(x)h(y)}
		Let $f(x),h(x)\in\bC[x]$, $g(x,y)\in\bC[x,y]$ be nonzero polynomials. If $f(x+y)g(x,y)=f(x)h(y)$, then $f(x)=c$, $g(x,y)=h(y)$ where $c\in\bCx$ is a constant.
	\end{lemma}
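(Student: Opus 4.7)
The plan is to compare degrees in $x$ on both sides of the identity $f(x+y)g(x,y)=f(x)h(y)$ and then exploit the resulting functional equation for $f$. First I would observe that, viewing everything as polynomials in $x$ with coefficients in $\mathbb{C}[y]$, the polynomial $f(x+y)$ has degree exactly $\deg f$ in $x$ with leading coefficient equal to that of $f$ (a nonzero constant). Hence the left-hand side has degree $\deg f + \deg_x g$ in $x$, while the right-hand side has degree $\deg f$ in $x$ (since $h(y)\neq 0$). Comparing yields $\deg_x g = 0$, so $g(x,y) = g(y)\in\mathbb{C}[y]$.

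Next, the identity reduces to $f(x+y)g(y) = f(x)h(y)$. Since $g(y)$ is a nonzero polynomial in $y$, there are infinitely many $y_0\in\mathbb{C}$ with $g(y_0)\neq 0$, and for each such $y_0$ I get
\begin{equation*}
f(x+y_0) = \frac{h(y_0)}{g(y_0)}\, f(x).
\end{equation*}
Comparing the leading coefficients in $x$ on the two sides forces $h(y_0)/g(y_0) = 1$, so in fact $f(x+y_0) = f(x)$ for all such $y_0$.

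Now suppose for contradiction that $n := \deg f \geq 1$, and write $f(x) = a_n x^n + \cdots$ with $a_n\neq 0$. Then $f(x+y_0) - f(x) = n a_n y_0\, x^{n-1} + (\text{lower in } x)$, which vanishes identically in $x$ only when $y_0 = 0$. This contradicts the existence of infinitely many admissible $y_0$. Therefore $\deg f = 0$, i.e.\ $f(x) = c$ for some $c \in \mathbb{C}^*$. Substituting back into the original identity gives $c\,g(x,y) = c\,h(y)$, hence $g(x,y) = h(y)$, completing the argument.

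There is no real obstacle here; the only subtle point is to record that the leading coefficient of $f(x+y)$ in $x$ is a scalar (not a polynomial in $y$), which is what makes the degree comparison clean and forces $g$ to be independent of $x$ before any substitution argument is attempted.
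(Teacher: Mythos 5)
Your proof is correct and follows essentially the same route as the paper: compare degrees in $x$ to force $\deg_x g=0$, then compare leading coefficients to reduce to $f(x+y)=f(x)$, which forces $f$ constant. The only cosmetic difference is that you specialize $y$ to infinitely many values $y_0$ rather than equating coefficients of $x^n$ in the two-variable identity directly, which is how the paper obtains $g(y)=h(y)$ in one step.
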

	\begin{proof}
		It is not difficult to see that $\deg f(x+y)=\deg f(x)$ and $\deg f(x+y)+\deg_xg(x,y)=\deg f(x)$. Thus $\deg_xg(x,y)=0$, namely $g(x,y)=g(y)\in\bC[y]$. Then it gives rise to $f(x+y)g(y)=f(x)h(y)$. Suppose that $f(x)=\sum_{i=0}^nf_ix^i$ for some $n\in\bZ_+$ and $f_n\neq 0$. Equating the coefficients of $x^n$, we obtain $g(y)f_n=h(y)f_n$. So $g(y)=h(y)$ and $f(x+y)=f(x)$. It follows that $f(x)=c\in\bCx$.
	\end{proof}
	
	\begin{lemma}\label{lem:x+ay+b)f(x+y)=(x+2a'y-y+2b')f(x)}
		Let $a,b,\alpha,\beta\in\bC$ and $f(x)$ be a nonzero polynomial. If $(x+ay+b)f(x+y)=(x+2\alpha y-y+2\beta)f(x)$, then $f(x)=c\in\bCx$ with $a=2\alpha-1$, $b=2\beta$ or $f(x)=k_1(x+b)$ for some $k_1\in\bCx$ with $a=0$, $\alpha=1$, $b=2\beta$.
	\end{lemma}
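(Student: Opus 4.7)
The plan is to do a straightforward degree analysis in the variable $y$. Write $f(x)=\sum_{i=0}^n f_i x^i$ with $f_n\neq 0$, and treat both sides of the identity as polynomials in $\bC[x][y]$. The right-hand side is linear in $y$, so every coefficient of $y^k$ for $k\geq 2$ on the left must vanish. Extracting the coefficient of $y^{n+1}$ on the left gives $a f_n=0$, which forces either $n=0$ or $a=0$; these are the two cases that correspond to the two conclusions of the lemma.

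In the first case, $n=0$, so $f(x)=c\in\bCx$. The identity reduces to $(x+ay+b)c=(x+(2\alpha-1)y+2\beta)c$, and matching coefficients of $y$ and the constant term yields $a=2\alpha-1$ and $b=2\beta$, which is the first asserted form.

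In the second case, $a=0$ and $n\geq 1$. Then $(x+b)f(x+y)=(x+(2\alpha-1)y+2\beta)f(x)$. Extracting the coefficient of $y^n$ on the left gives $(x+b)f_n$, a nonzero polynomial of $x$-degree one; on the right, if $n\geq 2$ the coefficient of $y^n$ is zero, a contradiction. Hence $n=1$ and $f(x)=f_1 x+f_0$ with $f_1\neq 0$. Expanding both sides of the identity and comparing coefficients of $xy$, $x$, $y$, and the constant term successively yields $2\alpha-1=1$, $b=2\beta$, and $f_0=bf_1$; thus $f(x)=f_1(x+b)$ and setting $k_1=f_1$ produces the second asserted form.

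The argument is essentially mechanical once the right variable for degree comparison is chosen; the only mild subtlety is the dichotomy between $n=0$ and $a=0$ coming from the $y^{n+1}$ coefficient, after which each branch is settled by matching a handful of coefficients. I do not anticipate any serious obstacle.
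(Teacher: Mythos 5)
Your proof is correct and follows essentially the same route as the paper: a degree comparison in $y$ to force $\deg f\le 1$, followed by coefficient matching in each of the two resulting cases. The only cosmetic point is that the identity ``coefficient of $y^{n+1}$ on the left equals $af_n$'' yields $af_n=0$ only when $n\ge 1$, but since you immediately record the $n=0$ alternative the dichotomy is stated correctly.
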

	\begin{proof}
		Equate the degree of $y$ on the both sides of the original equality, we deduce that $\deg f(x)\le1$. If $\deg f(x)=1$, we can assume that $f(x)=k_0+k_1x$ with $k_0\in\bC, k_1\in\bCx$. Then plug it into the original equality it follows that $a=0$, $\alpha=1$, $b=2\beta$ and $f(x)=k_1(x+b)$. If $\deg f(x)=0$, put $f(x)=c\in\bCx$ and take it into the original equality. We obtain that $a=2\alpha-1$ and $b=2\beta$.
	\end{proof}

	\subsection{Type $\mathcal{A}$}
	\hspace{1.5em}Let $R_\ep$ be solvable. Firstly, we discuss the extreme case. Let $R_\ep$ be the direct sum of two commutative Lie conformal algebras of rank one, namely $[A_\lambda A]=[A_\lambda B]=[B_\lambda B]=0$. Actually, we can generalize it to the case of higher rank. 
	
	Suppose that $R_\ep=\bigoplus_{i=1}^n\cp A_i$ with $[{A_i}_\lambda A_j]=0$ for any $i,j\in\{1,\dots,n\}$. To classify the Lie conformal superalgebra $R=R_\ep\oplus R_\op$, where $R_\op=\cp X$, we need to determine the free rank one conformal modules over $R_\ep$ firstly. For the finite solvable Lie conformal algebra, we have the following conformal analog of Lie's theorem:
	\begin{proposition}\label{lem:con.Lie.thm}\textup{(\cite{DK})}
		Let $R$ be a finite solvable Lie conformal algebra and $M$ be a finite conformal $R$-module. Then there exists $v\in M$ such that $a_\lambda v=\phi(a)v$, where $\phi: R\rightarrow \bC[\lambda]$ and $a\in R$. In particular, every finite nontrivial irreducible conformal $R$-module $M$ is free of rank one.
	\end{proposition}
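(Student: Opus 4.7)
The plan is to prove this by induction on the derived length $N$ of $R$, in the spirit of the classical Lie theorem but adapted to the conformal setting via the $\lambda$-action. A useful preliminary is that $a_{(0)}\colon M\to M$ is $\bC[\partial]$-linear: expanding the identity $a_\lambda(\partial v)=(\partial+\lambda)a_\lambda v$ from \eqref{conmod.1} in powers of $\lambda$ gives $a_{(j)}(\partial v)=\partial(a_{(j)}v)+j\, a_{(j-1)}v$, and the $j=0$ case yields $[a_{(0)},\partial]=0$. Hence after tensoring $M$ (which is finitely generated over $\bC[\partial]$) with the fraction field $\bC(\partial)$, the operators $\{a_{(0)}\}$ act $\bC(\partial)$-linearly on the finite-dimensional $\bC(\partial)$-vector space $M\otimes_{\bC[\partial]}\bC(\partial)$.

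For the base case where $R$ is abelian, the relation \eqref{conmod.2} collapses to $a_\lambda(b_\mu v)=b_\mu(a_\lambda v)$, so in particular the zeroth products $\{a_{(0)}:a\in R\}$ form a commuting family of $\bC(\partial)$-linear endomorphisms on a finite-dimensional space. The classical result on commuting families yields a simultaneous eigenvector; clearing denominators and arguing that the eigenvalues must be algebraic over $\bC[\partial]$, one recovers an eigenvector $v$ inside $M$ itself (possibly after replacing $M$ by a suitable torsion-free quotient). The eigenvalues of $v$ under the higher products $a_{(n)}$ are then determined by the recursion $a_{(j)}(\partial v)=\partial(a_{(j)}v)+j\, a_{(j-1)}v$ together with commutativity, and one packages them into a polynomial $\phi(a)(\lambda)\in\bC[\lambda]$.

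For the inductive step, let $I=R^{(N-1)}$ be the last nonzero derived ideal, which is abelian. The base case applied to $I$ yields a common $I$-weight vector $v_0\in M$ with weight $\phi_I\colon I\to\bC[\lambda]$. The main obstacle, exactly as in the classical Lie-theorem proof, is to show that the weight space
\begin{equation*}
M_{\phi_I}=\{v\in M:\ b_\lambda v=\phi_I(b)v \text{ for all } b\in I\}
\end{equation*}
is $R$-stable; this is the conformal analogue of Lie's trick. Concretely, for $a\in R$ and $b\in I$ one applies the Jacobi identity \eqref{jocobi-identity} to $[a_\lambda[b_\mu v_0]]$ and compares polynomial coefficients in $\lambda,\mu$, which together with the characteristic-zero hypothesis forces $\phi_I([a_\nu b])=0$ and hence the $R$-invariance of $\phi_I$. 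Granted this, $R/I$ acts on $M_{\phi_I}$ and has strictly smaller derived length, so induction produces the desired $v$.

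The ``in particular'' statement is then immediate: if $M$ is nontrivial irreducible, the cyclic $\bC[\partial]$-submodule $\bC[\partial]v$ generated by the common eigenvector is a nonzero conformal $R$-submodule, hence equals $M$, and is free of rank one by Lemma~\ref{lem:free.finite.rank.mod}.
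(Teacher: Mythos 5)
The paper offers no proof of this proposition---it is quoted verbatim from \cite{DK}---so your attempt has to be measured against the argument there, which runs through the extended annihilation algebra: one checks that $\Lie(R)^e$ as in \eqref{extended.ann.lie} is solvable when $R$ is, uses the filtration-and-finite-dimensionality device (the same one reproduced in this paper as Lemma~\ref{CK.lemma}) to produce a nonzero finite-dimensional subspace $M_N$ on which a finite-dimensional solvable quotient acts, applies the classical Lie theorem there to get $a_{(n)}v=c_{a,n}v$ with $c_{a,n}\in\bC$ and $c_{a,n}=0$ for $n\gg0$, and assembles $\phi(a)(\lambda)=\sum_n c_{a,n}\lambda^n/n!\in\bC[\lambda]$. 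Your plan of running the classical induction on derived length directly at the conformal level has two genuine gaps. First, the base case: the operators you need a simultaneous eigenvector for are all the $a_{(n)}$, $n\in\bZ_+$, not just the $a_{(0)}$; the recursion $a_{(j)}(\partial v)=\partial(a_{(j)}v)+j\,a_{(j-1)}v$ constrains how $a_{(j)}$ acts on $\partial v$ but does not determine $a_{(j)}v$ from $a_{(0)}v$, so an eigenvector of the zeroth products need not be an eigenvector of the higher ones. Moreover, $\bC(\partial)$ is not algebraically closed, and nothing in your argument forces the eigenvalues to be constants (elements of $\bC$ rather than of some algebraic extension of $\bC(\partial)$); the constancy is exactly what the reduction to a finite-dimensional Lie-algebra module buys you, and it is not recovered by ``clearing denominators.''

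Second, and more fatally, the inductive step collapses because the weight space $M_{\phi_I}$ is not a $\bC[\partial]$-submodule when $\phi_I\neq0$: if $b_\lambda v=\phi_I(b)(\lambda)v$, then by \eqref{conmod.1}
\begin{equation*}
b_\lambda(\partial v)=(\partial+\lambda)\phi_I(b)(\lambda)v=\phi_I(b)(\lambda)\,\partial v+\lambda\,\phi_I(b)(\lambda)\,v,
\end{equation*}
so $\partial v\in M_{\phi_I}$ forces $\phi_I(b)=0$ (as $M$ may be taken torsion-free). Hence $R/I$ does not act on $M_{\phi_I}$ as a conformal module and the induction cannot proceed as stated. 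In addition, the ``conformal Lie trick'' you invoke to show $\phi_I([a_\nu b])=0$ is precisely the hard step of the classical proof and relies there on a trace argument on a finite-dimensional space, which is unavailable here since $M$ is infinite-dimensional over $\bC$; asserting it is not proving it. Your final paragraph (the ``in particular'' clause) is correct as written: $\bC[\partial]v$ is a nonzero conformal submodule, hence all of $M$, and freeness follows from Lemma~\ref{lem:free.finite.rank.mod}. To repair the rest, replace the derived-length induction by the annihilation-algebra reduction sketched above.
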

	
	It is not difficult to get the following conclusion by Proposition~\ref{lem:con.Lie.thm}. 
	\begin{lemma}\label{lem:n.sol.mod}
		Let $M$ be the finite nontrivial conformal module over $R_\ep$. Then 
		\begin{equation*}
			M=\cp v,\ \ {A_i}_\lambda v=\phi_i(\lambda)v,\ \ \text{for all }\ i=1,\dots,n,
		\end{equation*}
		where $\phi_i(\lambda)\in\bC[\lambda]$ are not zero simultaneously.
	\end{lemma}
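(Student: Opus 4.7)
The plan is to derive this directly from the conformal analog of Lie's theorem, Proposition~\ref{lem:con.Lie.thm}, specialised to the abelian Lie conformal algebra $R_\ep=\bigoplus_{i=1}^n\cp A_i$. Since $R_\ep$ is in particular solvable, Proposition~\ref{lem:con.Lie.thm} yields a nonzero vector $w\in M$ together with polynomials $\phi_i(\lambda)\in\bC[\lambda]$ satisfying ${A_i}_\lambda w=\phi_i(\lambda)w$ for every $i=1,\dots,n$. The same proposition also guarantees that $M$ is free of rank one as a $\cp$-module, so we may fix a $\cp$-basis $v_0$ and write $M=\cp v_0$.

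The next step is to transfer the eigenvector property from $w$ to the chosen generator $v_0$. Expand $w=p(\partial)v_0$ with $p\neq 0$ and write ${A_i}_\lambda v_0=g_i(\partial,\lambda)v_0$ for unknown polynomials $g_i\in\bC[\partial,\lambda]$. Conformal sesquilinearity~\eqref{conmod.1} produces
\begin{equation*}
p(\partial+\lambda)\,g_i(\partial,\lambda)=\phi_i(\lambda)\,p(\partial)\qquad(i=1,\dots,n).
\end{equation*}
Nontriviality of $M$ forces some $g_i$ to be nonzero; for that index the equation also prevents $\phi_i$ from vanishing, and Lemma~\ref{lem:f(x+y)g(x,y)=f(x)h(y)} applied to the triple $(f,g,h)=(p,g_i,\phi_i)$ yields both that $p$ is a nonzero constant and that $g_i=\phi_i$. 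Once $p$ is constant, the displayed identity collapses to $g_j=\phi_j$ for every remaining $j$. Renaming $v:=v_0$ now delivers the stated description $M=\cp v$ with ${A_i}_\lambda v=\phi_i(\lambda)v$.

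Finally, the clause that the $\phi_i$ are not simultaneously zero is a one-line consequence of nontriviality: if every $\phi_i$ vanished, then conformal sesquilinearity would extend this to ${A_i}_\lambda(q(\partial)v)=q(\partial+\lambda)\cdot 0 = 0$ for every $q\in\cp$, so $R_\ep$ would act trivially on $M$, contradicting the hypothesis. The only genuine technical ingredient is Lemma~\ref{lem:f(x+y)g(x,y)=f(x)h(y)}, which bridges the abstract eigenvector supplied by Proposition~\ref{lem:con.Lie.thm} with the specified $\cp$-generator; everything else is a routine unpacking of definitions, so I do not anticipate a serious obstacle.
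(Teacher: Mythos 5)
Your proof is correct and follows the same route the paper takes: the paper simply asserts the lemma ``by Proposition~\ref{lem:con.Lie.thm},'' and your unpacking --- in particular the transfer of the common eigenvector $w=p(\partial)v_0$ to the actual $\cp$-generator via the identity $p(\partial+\lambda)g_i(\partial,\lambda)=\phi_i(\lambda)p(\partial)$ and Lemma~\ref{lem:f(x+y)g(x,y)=f(x)h(y)} --- supplies exactly the detail the paper glosses over. One caveat you inherit from the paper rather than introduce yourself: the rank-one clause of Proposition~\ref{lem:con.Lie.thm} is stated only for \emph{irreducible} modules, while the lemma's hypothesis says only ``nontrivial,'' and the conclusion $M=\cp v$ genuinely requires irreducibility (or an a priori rank-one hypothesis, as in the application to $R_\op=\cp X$), so your silent use of that clause matches what the statement must intend.
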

	
	Then, we can prove the following proposition.
	\begin{proposition}\label{prop:O}
		Let $R=R_\ep\oplus R_\op$ be a Lie conformal superalgebra with basis $\{A_i,X\ |\ i=1,\dots,n\}$, where $R_\ep=\bigoplus_{i=1}^n\cp A_i$ and $R_\op=\cp X$ with the relation $[{A_i}_\lambda A_j]=0$ for any $i,j\in\{1,\dots,n\}$. Then $R$ is isomorphic to one of the following Lie conformal superalgebras\textup{:}
		
		\subno{1} $\Tilde{{\mathcal{O}}_1}:=(\bigoplus_{i=1}^n\cp A_i)\oplus\cp X$,
		\begin{align*}
			[{A_i}_\lambda A_j]=0,\ [{A_i}_\lambda X]=\phi_i(\lambda)X,\ [X_\lambda X]=0,\ \forall\ i,j\in\{1,\dots,n\},\ \text{for some }\phi_i(\lambda)\in\bC[\lambda].
		\end{align*}
		
		\subno{2} $\Tilde{{\mathcal{O}}_2}:=(\bigoplus_{i=1}^n\cp A_i)\oplus\cp X$,
		\begin{align*}
			[{A_i}_\lambda A_j]=0,\ [{A_i}_\lambda X]=0,\ [X_\lambda X]=\sum_{i=1}^n\psi_i(\partial)A_i,\ \forall\ i,j\in\{1,\dots,n\},\ \text{for some }\psi_i(\partial)\in\cp.
		\end{align*}
	\end{proposition}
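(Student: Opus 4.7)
The plan is to extract all information from two Jacobi triples, $(A_i, A_i, X)$ and $(A_i, X, X)$, and show the structure collapses to one of the two listed normal forms. Following \eqref{R.rel.origin}, I would set
\[
[{A_i}_\lambda X] = \phi_i(\partial, \lambda) X, \qquad [X_\lambda X] = \sum_{k=1}^n \psi_k(\partial) A_k,
\]
with $\phi_i \in \bC[\partial, \lambda]$ and $\psi_k \in \cp$, the latter's independence from $\lambda$ coming from the super-skew-symmetry observation already recorded before \eqref{R.rel.origin}.

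Because $[{A_i}_\lambda A_j] = 0$, the Jacobi identity for the triple $(A_i, A_i, X)$ reduces to the single polynomial identity
\[
\phi_i(\partial+\lambda,\mu)\,\phi_i(\partial,\lambda) = \phi_i(\partial+\mu,\lambda)\,\phi_i(\partial,\mu).
\]
Assuming $\phi_i\neq 0$ and writing $d = \deg_\partial \phi_i$, $e = \deg_\lambda \phi_i$, the left-hand side has degree $e$ in $\mu$. On the right, the coefficient of $\mu^d$ in $\phi_i(\partial+\mu, \lambda)$ equals the coefficient of $\partial^d$ in $\phi_i(\partial, \lambda)$, which is a nonzero polynomial in $\lambda$, while the coefficient of $\mu^e$ in $\phi_i(\partial, \mu)$ is a nonzero polynomial in $\partial$; their product cannot cancel, so the right-hand side has degree $d+e$ in $\mu$. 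Equating degrees forces $d = 0$, giving $\phi_i(\partial, \lambda) = \phi_i(\lambda)$.

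Substituting this back into the Jacobi identity for $(A_i, X, X)$, whose left-hand side vanishes because $R_\ep$ is abelian, yields $2\,\phi_i(\lambda)\sum_k \psi_k(\partial)\, A_k = 0$. Linear independence of the $A_k$ over $\bC[\partial, \lambda]$ produces the dichotomy $\phi_i(\lambda)\,\psi_k(\partial) = 0$ for every pair $(i, k)$. Hence either every $\psi_k = 0$, giving $\Tilde{{\mathcal{O}}_1}$, or some $\psi_{k_0} \neq 0$, forcing every $\phi_i = 0$ and giving $\Tilde{{\mathcal{O}}_2}$. I would finish by verifying that in each family the remaining Jacobi triples are automatic: in particular $(X, X, X)$ collapses to $0 = 0$ because one of the two kinds of brackets (even-odd or odd-odd) is identically zero. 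The main technical point I anticipate is the degree bookkeeping in the second step; once the leading $\mu$-coefficient on the right-hand side of the Jacobi identity is shown to be non-cancelling, the classification is essentially formal.
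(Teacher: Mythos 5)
Your proposal is correct, and it reaches the same two normal forms through the same final dichotomy ($\phi_i(\lambda)\psi_k(\partial)=0$ from the Jacobi triple $(A_i,X,X)$, then split on whether all $\psi_k$ vanish), but it justifies the crucial first step differently. The paper gets $[{A_i}_\lambda X]=\phi_i(\lambda)X$ by viewing $R_\op$ as a conformal module over the commutative (hence solvable) algebra $R_\ep$ and invoking the conformal analogue of Lie's theorem (Proposition~\ref{lem:con.Lie.thm} via Lemma~\ref{lem:n.sol.mod}), whereas you derive the $\partial$-independence of $\phi_i$ directly from the Jacobi identity for the triple $(A_i,A_i,X)$, which (using $[{A_i}_\lambda A_i]=0$) reduces to $\phi_i(\partial+\lambda,\mu)\phi_i(\partial,\lambda)=\phi_i(\partial+\mu,\lambda)\phi_i(\partial,\mu)$; your degree count in $\mu$ is sound, since the leading $\mu$-coefficients of the two factors on the right lie in $\bC[\lambda]$ and $\bC[\partial]$ respectively and $\bC[\partial,\lambda]$ is a domain, so the right side has $\mu$-degree $d+e$ against $e$ on the left, forcing $d=0$. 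Your route is self-contained and elementary, at the cost of an explicit polynomial computation; the paper's route is shorter on the page but leans on an external structural theorem (and, strictly speaking, on the observation that for a rank-one free module the Lie-theorem eigenvector can be taken to be the free generator). The remaining checks you list --- linear independence of the $A_k$ over $\bC[\partial,\lambda]$, the vanishing of the mixed triples $(A_i,A_j,X)$ once each $\phi_i$ depends only on $\lambda$, and the collapse of $(X,X,X)$ in both families --- all go through as you describe.
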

	\begin{proof}
		Given that $X$ is the module over $R_\ep$, by Lemma~\ref{lem:n.sol.mod}, we can assume that $[{A_i}_\lambda X]=\phi_i(\lambda)X$, $i=1,\dots,n$, for some $\phi_i(\lambda)\in\bC[\lambda]$. $\phi_i(\lambda)$ can be zero simultaneously here, because it can be viewed as a trivial conformal module. We further assume that $[X_\lambda X]=\sum_{i=1}^n\psi_i(\partial)A_i$, $i=1,\dots,n$, for some $\psi_i(\lambda)\in\bC[\lambda]$, since $[X_\lambda X]=[X_{-\partial-\lambda} X]$.
		
		Using the Jacobi identity $[{A_i}_\lambda[X_\mu X]]=[[{A_i}_\lambda X]_{\lambda+\mu}X]+[X_\mu[{A_i}_\lambda X]]$ and comparing the coefficients of $A_j$, we obtain  
		\begin{align}\label{general O}
			2\phi_i(\lambda)\psi_j(\partial)=0,\ \ \text{for all }\ i,j\in\{1,\dots,n\}. 
		\end{align}
		If $\psi_j(\partial)=0$ for any $j$, then we get $\Tilde{{\mathcal{O}}_1}$. Otherwise, without loss of generality, we can suppose $\psi_j(\partial)\neq0$ for some fixed $j\in\{1,\dots,n\}$. According to \eqref{general O}, we have $\phi_i(\lambda)=0$ for all $i$, then $\Tilde{\mathcal{O}}_2$ follows.
	\end{proof}
	We call the Lie conformal superalgebras constructed in Proposition~\ref{prop:O} trivial. In particular, we get the following conclusion.
	
	\begin{lemma}\label{lem:O}
		For a Lie conformal superalgebra $R=R_\ep\oplus R_\op$ of rank $(2+1)$, if $R_\ep$ is the direct sum of two commutative Lie conformal algebras of rank one, then $R$ is isomorphic to one of the following Lie conformal superalgebras\textup{:}
		
		\subno{1} $\mathcal{O}_1:=\cp A\oplus\cp B\oplus\cp X$,
		\begin{align*}
			[A_\lambda A]=[A_\lambda B]=[B_\lambda B]=0,\ [A_\lambda X]=\phi(\lambda)X,\ [B_\lambda X]=\varphi(\lambda)X,\ [X_\lambda X]=0,
		\end{align*}
		where $\phi(\lambda),\varphi(\lambda)\in\bC[\lambda]$.
		
		\subno{2} $\mathcal{O}_2:=\cp A\oplus\cp B\oplus\cp X$,
		\begin{align*}
			[A_\lambda A]=[A_\lambda B]=[B_\lambda B]=0,\ [A_\lambda X]=[B_\lambda X]=0,\ [X_\lambda X]=\psi_1(\partial)A+\psi_2(\partial)B,
		\end{align*}
		where $\psi_1(\partial),\psi_2(\partial)\in\cp$.
	\end{lemma}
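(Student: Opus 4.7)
The plan is to recognize that Lemma~\ref{lem:O} is precisely the $n=2$ specialization of Proposition~\ref{prop:O}, which has just been established for arbitrary $n$. Setting $A_1 = A$ and $A_2 = B$, the hypothesis that $R_\ep$ decomposes as the direct sum of two rank-one commutative Lie conformal algebras is exactly the condition $[{A_i}_\lambda A_j] = 0$ for $i,j \in \{1,2\}$ required to invoke that proposition with $n=2$.

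Applying Proposition~\ref{prop:O}, the classification of $R$ reduces to exactly two possibilities. In the first, $\Tilde{\mathcal{O}_1}$, the bracket $[X_\lambda X]$ vanishes and the actions $[A_\lambda X] = \phi_1(\lambda)X$, $[B_\lambda X] = \phi_2(\lambda)X$ are governed by arbitrary polynomials $\phi_1(\lambda), \phi_2(\lambda) \in \bC[\lambda]$; in the second, $\Tilde{\mathcal{O}_2}$, both $[A_\lambda X]$ and $[B_\lambda X]$ vanish while $[X_\lambda X] = \psi_1(\partial)A + \psi_2(\partial)B$ for some $\psi_1(\partial), \psi_2(\partial) \in \cp$. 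Relabeling $\phi_1(\lambda)$ as $\phi(\lambda)$ and $\phi_2(\lambda)$ as $\varphi(\lambda)$ yields the stated forms of $\mathcal{O}_1$ and $\mathcal{O}_2$ verbatim.

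There is no genuine obstacle to overcome, since all the technical content has been absorbed into Proposition~\ref{prop:O}: the skew-symmetry of \eqref{skew-symm} forces $\psi_i(\partial,\lambda)$ to depend only on $\partial$, and the Jacobi identity $[{A_i}_\lambda [X_\mu X]] = [[{A_i}_\lambda X]_{\lambda+\mu} X] + [X_\mu[{A_i}_\lambda X]]$ yields the key dichotomy $2\phi_i(\lambda)\psi_j(\partial) = 0$ for all $i,j$, which splits the classification into the two cases above. The lemma is therefore obtained as a direct restriction and notational identification, and the proof amounts to citing Proposition~\ref{prop:O} with $n=2$.
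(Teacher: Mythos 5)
Your proposal is correct and follows exactly the paper's own route: the paper derives Lemma~\ref{lem:O} as the immediate $n=2$ specialization of Proposition~\ref{prop:O}, with the identifications $A_1=A$, $A_2=B$ and the relabeling of the polynomials, offering no further argument. The technical content (skew-symmetry forcing $\psi_i$ to depend only on $\partial$, and the Jacobi identity yielding $2\phi_i(\lambda)\psi_j(\partial)=0$) is indeed all contained in the proof of Proposition~\ref{prop:O}, just as you say.
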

	
	From now on, we suppose that the basis $\{A,B,X\}$ satisfies the relations \eqref{H.A} and \eqref{R.rel.origin}. Let at least one of $P_1(\partial,\lambda)$ and $Q_1(\partial,\lambda)$ be nonzero. By \eqref{H.A.mod}, we can assume that $[A_\lambda X]=\phi(\lambda)X$ for some $\phi(\lambda)\in\bC[\lambda]$ since its $R_\ep$-actions are independent of $\partial$. In addition, we can conclude that $[B_\lambda X]=0$ by Proposition~\ref{rank2.lie.con.alg.mod}(1).
	
	Using the Jacobi identity $[A_\lambda[X_\mu X]]=[[A_\lambda X]_{\lambda+\mu}X]+[X_\mu[A_\lambda X]]$ and comparing the coefficients of $A$ and $B$ respectively, we obtain 
	\begin{align}
		2\phi(\lambda)\psi_1(\partial)&=0,\label{A.AXX.A.0}\\
		2\phi(\lambda)\psi_2(\partial)&=\psi_1(\partial+\lambda)Q_1(\partial,\lambda)+\psi_2(\partial+\lambda)P_1(\partial,\lambda).\label{A.AXX.B.0}
	\end{align}
	Similarly, using the Jacobi identity $[B_\lambda[X_\mu X]]=[[B_\lambda X]_{\lambda+\mu}X]+[X_\mu[B_\lambda X]]$ and comparing the coefficients of $B$, we get
	\begin{align}
		\psi_1(\partial+\lambda)P_1(\partial,-\partial-\lambda)=0.\label{A.BXX.B.0}
	\end{align}
	To solve the equalities \eqref{A.AXX.A.0}--\eqref{A.BXX.B.0}, we can discuss the following two cases:
	
	\afCase{1} $R_\ep$ is nilpotent, namely $P_1(\partial,\lambda)=0$.
	
	The equality \eqref{A.BXX.B.0} always holds and \eqref{A.AXX.B.0} turns into
	\begin{align}\label{A.AXX.B.0.1}
		2\phi(\lambda)\psi_2(\partial)=\psi_1(\partial+\lambda)Q_1(\partial,\lambda).
	\end{align}
	By \eqref{A.AXX.A.0} and \eqref{A.AXX.B.0.1}, we can get $\psi_1(\partial)=0$ since $Q_1(\partial,\lambda)\neq 0$ under our assumption. Meanwhile, we also have $\phi(\lambda)=0$ or $\psi_2(\partial)=0$.
	
	\afCase{2} $R_\ep$ is solvable but not nilpotent, namely $Q_1(\partial,\lambda)=0$.
	
	By \eqref{A.BXX.B.0}, we have $\psi_1(\partial)=0$. The equality \eqref{A.AXX.B.0} becomes $2\phi(\lambda)\psi_2(\partial)=\psi_2(\partial+\lambda)P_1(\partial,\lambda)$. If $\psi_2(\partial)=0$, this equality always holds and we have $P_1(\partial,\lambda)=p(\lambda)\in\bC[\lambda]$ and $p(\lambda)\neq 0$ by Proposition~\ref{rank2.lie.con.alg}(1). Otherwise, by Lemma~\ref{lem:f(x+y)g(x,y)=f(x)h(y)}, we obtain $P_1(\partial,\lambda)=2\phi(\lambda)$ and $\psi_2(\partial)$ is a nonzero constant.
	
	So we can get the following lemma by the above discussion.
	\begin{lemma}\label{type.A.0}
		Let $R=R_\ep\oplus R_\op$ be a Lie conformal superalgebra of rank $(2+1)$ and $R_\ep$ be non-commutative. If $R_\ep$ is nilpotent, then $R$ is isomorphic to one of the following Lie conformal superalgebras\textup{:}
		
		\subno{1} $\mathcal{A}_1:=\cp A\oplus\cp B\oplus\cp X$,
		\begin{align*}
			[A_\lambda A]=f_1(\partial,\lambda)B,\ [A_\lambda X]=\phi_1(\lambda)X,\ [A_\lambda B]=[B_\lambda B]=[B_\lambda X]=[X_\lambda X]=0,
		\end{align*}
		where $f_1(\partial,\lambda)\in\bC[\partial,\lambda]$ is a nonzero skew-symmetric polynomial, $\phi_1(\lambda)\in\bC[\lambda]$.
		
		\subno{2} $\mathcal{A}_2:=\cp A\oplus\cp B\oplus\cp X$,
		\begin{align*}
			[A_\lambda A]=f_2(\partial,\lambda)B,\ [X_\lambda X]=\psi(\partial)B,\ [A_\lambda B]=[B_\lambda B]=[A_\lambda X]=[B_\lambda X]=0,
		\end{align*}
		where $f_2(\partial,\lambda)\in\bC[\partial,\lambda]$ is a nonzero skew-symmetric polynomial, $\psi(\partial)\in\cp$.
		
		If $R_\ep$ is solvable but not nilpotent, then $R$ is isomorphic to one of the following Lie conformal superalgebras\textup{:}
		
		\subno{3} $\mathcal{A}_3:=\cp A\oplus\cp B\oplus\cp X$,
		\begin{align*}
			[A_\lambda B]=2\phi_3(\lambda)B,\ [A_\lambda X]=\phi_3(\lambda)X,\ [X_\lambda X]=\eta B,\ [A_\lambda A]=[B_\lambda B]=[B_\lambda X]=0,
		\end{align*}
		where $\eta\in\bCx$, $0\neq \phi_3(\lambda)\in\bC[\lambda]$.
		
		\subno{4} $\mathcal{A}_4:=\cp A\oplus\cp B\oplus\cp X$,
		\begin{align*}
			[A_\lambda B]=p(\lambda)B,\ [A_\lambda X]=\phi_4(\lambda)X,\ [A_\lambda A]=[B_\lambda B]=[B_\lambda X]=[X_\lambda X]=0,
		\end{align*}
		where $p(\lambda), \phi_4(\lambda)\in\bC[\lambda]$ and $p(\lambda)\neq0$.
	\end{lemma}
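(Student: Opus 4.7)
The plan is to leverage the setup already established before the lemma: from Proposition~\ref{rank2.lie.con.alg.mod}(1) applied to the rank one $R_\ep$-module $R_\op=\cp X$, one has $[A_\lambda X]=\phi(\lambda)X$ and $[B_\lambda X]=0$, while skew-symmetry forces $[X_\lambda X]=\psi_1(\partial)A+\psi_2(\partial)B$. The classification therefore reduces to determining the admissible triples $(\phi(\lambda),\psi_1(\partial),\psi_2(\partial))$, and these are constrained by the Jacobi identities for $[A_\lambda[X_\mu X]]$ and $[B_\lambda[X_\mu X]]$, which yield equations \eqref{A.AXX.A.0}, \eqref{A.AXX.B.0}, and \eqref{A.BXX.B.0}.

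First I would treat the nilpotent case, where $P_1=0$ and $Q_1\neq 0$ (the latter holds because $R_\ep$ is non-commutative). A short split on whether $\phi$ vanishes handles \eqref{A.AXX.A.0}: if $\phi\neq 0$, then \eqref{A.AXX.A.0} immediately forces $\psi_1=0$; if $\phi=0$, then \eqref{A.AXX.B.0} reduces to $\psi_1(\partial+\lambda)Q_1(\partial,\lambda)=0$, which again forces $\psi_1=0$ because $Q_1\neq 0$. Substituting $\psi_1=0$ back into \eqref{A.AXX.B.0} gives $\phi(\lambda)\psi_2(\partial)=0$, so either $\psi_2=0$ (producing $\mathcal{A}_1$ with $f_1=Q_1$ and $\phi_1=\phi$) or $\phi=0$ (producing $\mathcal{A}_2$ with $f_2=Q_1$ and $\psi=\psi_2$).

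Next I would handle the case when $R_\ep$ is solvable but not nilpotent, so by Proposition~\ref{rank2.lie.con.alg}(1) we have $Q_1=0$ and $P_1(\partial,\lambda)=p(\lambda)$ for some nonzero $p(\lambda)\in\bC[\lambda]$. Equation \eqref{A.BXX.B.0} then reads $\psi_1(\partial+\lambda)p(-\partial-\lambda)=0$, forcing $\psi_1=0$, and \eqref{A.AXX.B.0} becomes $2\phi(\lambda)\psi_2(\partial)=\psi_2(\partial+\lambda)p(\lambda)$. If $\psi_2=0$, this is automatic and we land in $\mathcal{A}_4$. Otherwise I would apply Lemma~\ref{lem:f(x+y)g(x,y)=f(x)h(y)} with $f=\psi_2$, $g=p$, $h=2\phi$ to conclude that $\psi_2$ is a nonzero constant $\eta$ and $p(\lambda)=2\phi(\lambda)$; this is exactly the data of $\mathcal{A}_3$ with $\phi_3=\phi$, and $\phi_3\neq 0$ follows from $p\neq 0$.

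The main subtlety is bookkeeping rather than substance: one must check that the remaining Jacobi identities (those not used to extract the three constraint equations) are automatically satisfied for each candidate $\mathcal{A}_i$, and also record the mild overlap at $\phi=\psi_2=0$ where $\mathcal{A}_1$ and $\mathcal{A}_2$ coincide. Both verifications are routine, because every remaining Jacobi triple has at least one slot filled by $B$, whose brackets either vanish or produce multiples of $B$ whose own brackets are trivial, so these identities reduce to equalities already established.
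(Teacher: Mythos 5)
Your proposal is correct and follows essentially the same route as the paper: the same reduction via Proposition~\ref{rank2.lie.con.alg.mod}(1) and skew-symmetry, the same three Jacobi constraints \eqref{A.AXX.A.0}--\eqref{A.BXX.B.0}, the same case split on nilpotent versus solvable-not-nilpotent, and the same invocation of Lemma~\ref{lem:f(x+y)g(x,y)=f(x)h(y)} to pin down $\mathcal{A}_3$. Your explicit split on whether $\phi$ vanishes when deducing $\psi_1=0$ in the nilpotent case, and your remark on verifying the remaining Jacobi identities, only make explicit what the paper leaves implicit.
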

	\begin{remark}
		For $\mathcal{A}_3$, up to isomorphism, we can take $\eta=2$ customarily. When encountering similar situations in the following, we always take this nonzero constant as 2.
	\end{remark}
	
	Our conclusion shows that a Lie conformal superalgebra may be not nilpotent even though its even part is nilpotent. However, one can easily check the following proposition.
	\begin{proposition}\label{prop:sol}
		Let $R=R_\ep\oplus R_\op$ be a Lie conformal superalgebra of rank $(2+1)$. Then $R$ is solvable if and only if $R_\ep$ is solvable. 
	\end{proposition}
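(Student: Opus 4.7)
The plan is to prove the two implications separately. The forward direction is general and does not use the rank hypothesis; the backward direction leans on the classification of rank $(2+1)$ Lie conformal superalgebras with solvable even part produced earlier in this subsection.

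For the forward implication, I would use that $R_\ep$ is a sub-Lie conformal algebra of $R$. A routine induction on $n$ gives the containment $(R_\ep)^{(n)}\subseteq R^{(n)}$ for every $n\in\bZ_+$: once the containment is known at stage $n$, one has
\[
(R_\ep)^{(n+1)}=[(R_\ep)^{(n)},(R_\ep)^{(n)}]_\cp\subseteq[R^{(n)},R^{(n)}]_\cp=R^{(n+1)}.
\]
Hence $R^{(N)}=0$ forces $(R_\ep)^{(N)}=0$, so $R_\ep$ is solvable. No rank constraint is used here.

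For the backward implication, assume $R_\ep$ is solvable. By Proposition~\ref{rank2.lie.con.alg} this means $R_\ep$ is either the rank two commutative Lie conformal algebra or of Type A. Combined with the structural classifications just established, Lemma~\ref{lem:O} and Lemma~\ref{type.A.0}, this forces $R$ to be isomorphic to one of the six explicit models $\mathcal{O}_1,\mathcal{O}_2,\mathcal{A}_1,\mathcal{A}_2,\mathcal{A}_3,\mathcal{A}_4$. For each of these I would read off $R'=[R,R]$ directly from the defining $\lambda$-brackets and iterate; in every case $R'$ is contained in a very small $\cp$-submodule (at most $\cp B\oplus\cp X$), so that $R''$ or at worst $R'''$ vanishes. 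The verification is entirely mechanical, so I would not spell it out in detail; for instance, in $\mathcal{A}_3$ one reads off $R'\subseteq\cp B\oplus\cp X$, then $R''=\cp B$ (from $[X_\lambda X]=2B$), and then $R'''=0$.

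The main potential obstacle is conceptual rather than computational: odd--odd brackets can a priori feed new even elements into the derived series through $[X_\lambda X]=\psi_1(\partial)A+\psi_2(\partial)B$, so the derived series of $R$ is not obviously dominated by that of $R_\ep$. The point that rescues the argument is the observation made just after Definition~\ref{def:lie conformal superalgebra}, that when $|X|=\op$ the polynomial $[X_\lambda X]$ is independent of $\lambda$; together with $R_\op=\cp X$ being of rank one this confines the contribution of $[R_\op,R_\op]$ to a rank $\le 1$ $\cp$-submodule of $R_\ep$, which, as a subalgebra of the solvable $R_\ep$, must be abelian (by the rank one classification of \cite{DK}). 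This is why the case-by-case check really does terminate, and why the equivalence would not survive without the rank constraints.
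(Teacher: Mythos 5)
Your argument is correct, and it is essentially the argument the paper intends: the paper offers no written proof of Proposition~\ref{prop:sol} (it only says ``one can easily check''), and the natural way to check it is exactly what you do --- the forward direction because $R_\ep$ is a subalgebra, so $(R_\ep)^{(n)}\subseteq R^{(n)}$ by induction, and the backward direction by inspecting the $\lambda$-brackets of the trivial case and of $\mathcal{O}_1,\mathcal{O}_2,\mathcal{A}_1,\dots,\mathcal{A}_4$ from Lemma~\ref{lem:O} and Lemma~\ref{type.A.0}, where in every case $R'''=0$. Your sample computation for $\mathcal{A}_3$ is right.

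Two caveats about your closing paragraph, neither of which affects the validity of the proof. First, the justification you offer there is imprecise: the rank $\le 1$ $\cp$-submodule generated by $[X_\lambda X]=\psi_1(\partial)A+\psi_2(\partial)B$ need not be a \emph{subalgebra} of $R_\ep$ (e.g.\ if $\psi_1\neq 0$ and $[A_\lambda A]=f(\partial,\lambda)B$, the bracket of this element with itself lands in $\cp B$, which need not lie in $\cp(\psi_1 A+\psi_2 B)$), so the appeal to the rank one classification of \cite{DK} does not apply as stated; what actually happens in the classified list is that $\psi_1=0$ whenever $R_\ep$ is non-commutative, which is why $[R_{\op}{}_\lambda R_\op]\subseteq\cp B$ with $[B_\lambda B]=0$. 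Second, the claim that the equivalence ``would not survive without the rank constraints'' is doubtful: the analogous statement for finite-dimensional Lie superalgebras ($\mathfrak{g}$ solvable iff $\mathfrak{g}_\ep$ solvable, \cite{K77}) holds in full generality, and one expects the same for arbitrary finite Lie conformal superalgebras; the rank hypothesis is used here only because it makes the case check available, not because the statement fails without it.
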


	\subsection{Type $\mathcal{B}$}
	\hspace{1.5em}Let $R_\ep$ be the direct sum of two Virasoro Lie conformal algebras. We suppose that the basis $\{A,B,X\}$ satisfies the relations \eqref{H.B} and \eqref{R.rel.origin}. If $[A_\lambda X]=[B_\lambda X]=0$, we can deduce that $[X_\lambda X]=0$ by the Jacobi identity, which leads to the trivial case (see Remark~\ref{trivial case}). Otherwise, by Proposition~\ref{rank2.lie.con.alg.mod}, we can assume that $[A_\lambda X]=(\partial+\alpha\lambda+\beta)X$, $[B_\lambda X]=0$ via \eqref{H.B.mod}, since $X$ can be regarded as the module over $R_\ep$, where $\alpha,\beta\in\bC$. Using the Jacobi identity $[B_\lambda[X_\mu X]]=[[B_\lambda X]_{\lambda+\mu}X]+[X_\mu[B_\lambda X]]$ and comparing the coefficients of $B$, we obtain $ (\partial+2\lambda)\psi_2(\partial+\lambda)=0$. It follows that $\psi_2(\partial)=0$. Correspondingly, considering the Jacobi identity $[A_\lambda[X_\mu X]]=[[A_\lambda X]_{\lambda+\mu}X]+[X_\mu[A_\lambda X]]$ and comparing the coefficients of $A$, we have
	\begin{align}\label{B.AXX.A}
		(\partial+2\lambda)\psi_1(\partial+\lambda)=(\partial+2\alpha\lambda-\lambda+2\beta)\psi_1(\partial).
	\end{align}
	If $\psi_1(\partial)=0$, then the equality \eqref{B.AXX.A} always holds. Otherwise, from Lemma~\ref{lem:x+ay+b)f(x+y)=(x+2a'y-y+2b')f(x)}, we have $\alpha=\frac{3}{2}$, $\beta=0$ and $\psi_1(\partial)$ is a nonzero constant. Hence, we get the following Lie conformal superalgebras.
	\begin{lemma}
		For a Lie conformal superalgebra $R=R_\ep\oplus R_\op$ of rank $(2+1)$, if $R_\ep$ is the direct sum of two Virasoro Lie conformal algebras, then $R$ is isomorphic to one of the following Lie conformal superalgebras\textup{:}
		
		\subno{1} $\mathcal{B}_0:=\cp A\oplus\cp B\oplus\cp X$,
		\begin{align*}
			[A_\lambda A]=(\partial+2\lambda)A,\ [B_\lambda B]=(\partial+2\lambda)B,\ [A_\lambda B]=[A_\lambda X]=[B_\lambda X]=[X_\lambda X]=0,
		\end{align*}
		
		\subno{2} $\mathcal{B}_1:=\cp A\oplus\cp B\oplus\cp X$,
		\begin{align*}
			[A_\lambda A]&=(\partial+2\lambda)A,\ [B_\lambda B]=(\partial+2\lambda)B,\ [A_\lambda B]=0,\\
			[A_\lambda &X]=(\partial+\alpha\lambda+\beta)X,\ [B_\lambda X]=[X_\lambda X]=0,
		\end{align*}
		where $\alpha,\beta\in\bC$.
		
		\subno{3} $\mathcal{B}_2:=\cp A\oplus\cp B\oplus\cp X$,
		\begin{align*}
			[A_\lambda A]&=(\partial+2\lambda)A,\ [B_\lambda B]=(\partial+2\lambda)B,\ [A_\lambda B]=0,\\
			[A_\lambda &X]=(\partial+\frac{3}{2}\lambda)X,\ [X_\lambda X]=2A,\ [B_\lambda X]=0.
		\end{align*}
	\end{lemma}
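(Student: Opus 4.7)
My approach starts from the general ansatz in \eqref{R.rel.origin} under the Type B relations \eqref{H.B}, then exhausts the possible $R_\ep$-module structures on $R_\op=\cp X$ via Proposition~\ref{rank2.lie.con.alg.mod}(2), and finally imposes the Jacobi identity on the triples $(A,X,X)$ and $(B,X,X)$ to pin down the polynomials $\psi_1(\partial)$ and $\psi_2(\partial)$. A preliminary observation is that because $R_\ep$ is symmetric in $A\leftrightarrow B$ (both play the role of a Virasoro generator and they commute), I may assume without loss of generality that, whenever one of $[A_\lambda X],[B_\lambda X]$ vanishes, it is $[B_\lambda X]$.

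First I would dispose of the degenerate subcase $[A_\lambda X]=[B_\lambda X]=0$. Expanding the Jacobi identity $[A_\lambda[X_\mu X]]=[[A_\lambda X]_{\lambda+\mu}X]+[X_\mu[A_\lambda X]]$ and collecting the coefficient of $A$ (using $[A_\lambda A]=(\partial+2\lambda)A$) forces $(\partial+2\lambda)\psi_1(\partial+\lambda)=0$, so $\psi_1=0$; symmetrically $\psi_2=0$. This yields $\mathcal{B}_0$.

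In the nontrivial subcase, combining Lemma~\ref{lem:free.finite.rank.mod} with Proposition~\ref{rank2.lie.con.alg.mod}(2) and the $A\leftrightarrow B$ symmetry, I may write $[A_\lambda X]=(\partial+\alpha\lambda+\beta)X$ and $[B_\lambda X]=0$ for some $\alpha,\beta\in\bC$. The Jacobi identity on $(B,X,X)$ again gives $(\partial+2\lambda)\psi_2(\partial+\lambda)=0$, hence $\psi_2=0$. The Jacobi identity on $(A,X,X)$ produces exactly equation~\eqref{B.AXX.A}, namely $(\partial+2\lambda)\psi_1(\partial+\lambda)=(\partial+2\alpha\lambda-\lambda+2\beta)\psi_1(\partial)$. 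If $\psi_1=0$ the constraint is vacuous and we obtain $\mathcal{B}_1$ with free parameters $\alpha,\beta$. If $\psi_1\neq 0$, I apply Lemma~\ref{lem:x+ay+b)f(x+y)=(x+2a'y-y+2b')f(x)} with $a=2,\ b=0$; since the lemma's linear branch requires $a=0$, it is excluded, and only the constant branch survives, giving $\alpha=3/2$, $\beta=0$, and $\psi_1$ a nonzero constant. Rescaling $X$ normalizes $\psi_1$ to $2$, yielding $\mathcal{B}_2$.

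The main obstacle I anticipate is not computational but organizational: making the $A\leftrightarrow B$ reduction rigorous as a statement about isomorphism classes rather than a convenient labeling, and being careful that in the $\mathcal{B}_1$ case the parameters $(\alpha,\beta)\in\bC^2$ really do parametrize distinct algebras up to the remaining isomorphism freedom (any further collapse of parameters belongs to the separate automorphism discussion in Section~4). Everything else reduces to the two polynomial Lemmas~\ref{lem:f(x+y)g(x,y)=f(x)h(y)} and \ref{lem:x+ay+b)f(x+y)=(x+2a'y-y+2b')f(x)}, which already encapsulate the only nontrivial algebraic identities needed.
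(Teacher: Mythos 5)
Your proposal is correct and follows essentially the same route as the paper: dispose of the case $[A_\lambda X]=[B_\lambda X]=0$ via the Jacobi identity to get $\mathcal{B}_0$, otherwise use Proposition~\ref{rank2.lie.con.alg.mod}(2) together with the $A\leftrightarrow B$ swap to normalize $[A_\lambda X]=(\partial+\alpha\lambda+\beta)X$, $[B_\lambda X]=0$, kill $\psi_2$ by the $(B,X,X)$ Jacobi identity, and resolve \eqref{B.AXX.A} by Lemma~\ref{lem:x+ay+b)f(x+y)=(x+2a'y-y+2b')f(x)} (correctly discarding the linear branch since the lemma's $a$ equals $2\neq 0$), giving $\mathcal{B}_1$ and $\mathcal{B}_2$. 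The only cosmetic quibble is that Lemma~\ref{lem:free.finite.rank.mod} need not be invoked, since $R_\op=\cp X$ is free of rank one by the rank $(2+1)$ hypothesis.
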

	
	It is not necessary to consider the case of a module satisfying the relation \eqref{H.B.mod'}, because we can define a homomorphism between the elements of the basis $\{A, B\}$ and it is actually an isomorphism. Then, the Lie conformal superalgebra constructed by \eqref{H.B.mod'} must be isomorphic to one of $\mathcal{B}_i(i=0,1,2)$.

	\subsection{Type $\mathcal{C}$}
	\hspace{1.5em}Let $R_\ep$ be a Lie conformal algebra satisfying the relation \eqref{H.C}. Firstly, we have the trivial case (see Remark~\ref{trivial case}). Otherwise, we assume the $R_\ep$-action is nontrivial. We still suppose that $[X_\lambda X]=\psi_1(\partial)A+\psi_2(\partial)B$ just like the equality in \eqref{R.rel.origin}. In the first case, by \eqref{H.C.mod.1}, we can assume that $[A_\lambda X]=(\partial+\alpha\lambda+\beta)X$ for some $\alpha, \beta\in\bC$ and $[B_\lambda X]=0$. Using the Jacobi identity $[A_\lambda[X_\mu X]]=[[A_\lambda X]_{\lambda+\mu}X]+[X_\mu[A_\lambda X]]$ and comparing the coefficients of $A$ and $B$ respectively, we obtain 
	\begin{align}
		(\partial+2\alpha\lambda-\lambda+2\beta)\psi_1(\partial)&=(\partial+2\lambda)\psi_1(\partial+\lambda),\label{C.AXX.A}\\
		(\partial+2\alpha\lambda-\lambda+2\beta)\psi_2(\partial)&=0.\label{C.AXX.B}
	\end{align}
	We can deduce that $\psi_2(\partial)=0$ by \eqref{C.AXX.B} directly. If $\psi_1(\partial)=0$, the equality \eqref{C.AXX.A} always holds. Otherwise, by Lemma~\ref{lem:x+ay+b)f(x+y)=(x+2a'y-y+2b')f(x)}, we have $\alpha=\frac{3}{2}$, $\beta=0$ and $\psi_1(\partial)$ is a nonzero constant.
	
	In the second case, by \eqref{H.C.mod.2}, we can assume that $[B_\lambda X]=\varphi(\lambda)X$ for some nonzero polynomial $\varphi(\lambda)\in\bC[\lambda]$ and $[A_\lambda X]=0$. Using the Jacobi identity $[B_\lambda[X_\mu X]]=[[B_\lambda X]_{\lambda+\mu}X]+[X_\mu[B_\lambda X]]$, we can get $2\varphi(\lambda)\big(\psi_1(\partial)A+\psi_2(\partial)B\big)=0$, which implies that $\psi_1(\partial)=\psi_2(\partial)=0$.
	
	In a conclusion, we have the following lemma.
	\begin{lemma}
		For a Lie conformal superalgebra $R=R_\ep\oplus R_\op$ of rank $(2+1)$, if $R_\ep$ is the direct sum of the rank one commutative Lie conformal algebra and the Virasoro Lie conformal algebra, then $R$ is isomorphic to one of the following Lie conformal superalgebras\textup{:}
		
		\subno{1} $\mathcal{C}_0:=\cp A\oplus\cp B\oplus\cp X$,
		\begin{align*}
			[A_\lambda A]=(\partial+2\lambda)A,\ [A_\lambda B]=[B_\lambda B]=[A_\lambda X]=[B_\lambda X]=[X_\lambda X]=0.
		\end{align*}
		
		\subno{2} $\mathcal{C}_1:=\cp A\oplus\cp B\oplus\cp X$,
		\begin{align*}
			[A_\lambda A]=(\partial+2\lambda)A,\ [A_\lambda X]=(\partial+\alpha\lambda+\beta)X,\ [A_\lambda B]=[B_\lambda B]=[B_\lambda X]=[X_\lambda X]=0,
		\end{align*}
		where $\alpha,\beta\in\bC$.
		
		\subno{3} $\mathcal{C}_2:=\cp A\oplus\cp B\oplus\cp X$,
		\begin{align*}
			[A_\lambda A]=(\partial+2\lambda)A,\ [A_\lambda X]=(\partial+\frac{3}{2}\lambda)X,\ [X_\lambda X]=2A,\ [A_\lambda B]=[B_\lambda B]=[B_\lambda X]=0.
		\end{align*}
		
		\subno{4} $\mathcal{C}_3:=\cp A\oplus\cp B\oplus\cp X$,
		\begin{align*}
			[A_\lambda A]=(\partial+2\lambda)A,\ [B_\lambda X]=\varphi(\lambda)X,\ [A_\lambda B]=[B_\lambda B]=[A_\lambda X]=[X_\lambda X]=0,
		\end{align*}
		where $0\neq \varphi(\lambda)\in\bC[\lambda]$.
	\end{lemma}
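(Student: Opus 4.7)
The plan is to start from the general form \eqref{R.rel.origin} with $R_\ep$ governed by \eqref{H.C}, and then split according to whether the $R_\ep$-action on $R_\op=\cp X$ is trivial. If the action is trivial, skew-symmetry together with Jacobi identity forces $[X_\lambda X]=0$ as well (trivially $\psi_1,\psi_2$ are unconstrained, but one quickly sees from $[A_\lambda[X_\mu X]]$ that $(\partial+2\lambda)\psi_1(\partial+\lambda)=0$ and hence $\psi_1=0$, and similarly nothing remains), yielding the trivial case $\mathcal{C}_0$.

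If the $R_\ep$-action is nontrivial, I would invoke Proposition~\ref{rank2.lie.con.alg.mod}(3) to reduce to exactly the two shapes \eqref{H.C.mod.1} and \eqref{H.C.mod.2}. In the first shape, I would set $[A_\lambda X]=(\partial+\alpha\lambda+\beta)X$, $[B_\lambda X]=0$, and $[X_\lambda X]=\psi_1(\partial)A+\psi_2(\partial)B$. Expanding the Jacobi identity $[A_\lambda[X_\mu X]]=[[A_\lambda X]_{\lambda+\mu}X]+[X_\mu[A_\lambda X]]$ and comparing coefficients of $A$ and $B$ gives the pair \eqref{C.AXX.A}–\eqref{C.AXX.B}. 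The $B$-coefficient equation immediately forces $\psi_2=0$; the $A$-coefficient equation is precisely of the form handled by Lemma~\ref{lem:x+ay+b)f(x+y)=(x+2a'y-y+2b')f(x)}, which either allows $\psi_1=0$ (producing $\mathcal{C}_1$) or pins down $\alpha=\tfrac32$, $\beta=0$, and $\psi_1$ a nonzero constant, yielding $\mathcal{C}_2$ (after rescaling $X$ to normalize the constant to $2$).

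For the second shape, set $[B_\lambda X]=\varphi(\lambda)X$ with $\varphi\neq 0$ and $[A_\lambda X]=0$. Here the Jacobi identity $[B_\lambda[X_\mu X]]=[[B_\lambda X]_{\lambda+\mu}X]+[X_\mu[B_\lambda X]]$ expands to $2\varphi(\lambda)(\psi_1(\partial)A+\psi_2(\partial)B)=0$, forcing $\psi_1=\psi_2=0$; this gives $\mathcal{C}_3$. I should also briefly check that the other Jacobi identities (in particular $[A_\lambda[B_\mu X]]$ and $[B_\lambda[B_\mu X]]$) impose no further restrictions beyond what \eqref{H.C} already encodes; these are routine since $[A_\lambda B]=[B_\lambda B]=0$.

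The only genuine obstacle is the $A$-coefficient equation \eqref{C.AXX.A} in the first subcase, but Lemma~\ref{lem:x+ay+b)f(x+y)=(x+2a'y-y+2b')f(x)} is tailored exactly to it: the degree-in-$\lambda$ comparison caps $\deg\psi_1\le 1$, and a short case split on $\deg\psi_1\in\{0,1\}$ with the coefficient of $\lambda$ fixes $(\alpha,\beta)=(\tfrac32,0)$ in the nonzero case. Everything else is bookkeeping across the two subcases to assemble $\mathcal{C}_0,\mathcal{C}_1,\mathcal{C}_2,\mathcal{C}_3$, and a short remark that a rescaling $X\mapsto cX$ normalizes any nonzero constant value of $\psi_1$ to $2$.
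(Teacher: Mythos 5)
Your handling of the two nontrivial-action subcases coincides with the paper's proof: Proposition~\ref{rank2.lie.con.alg.mod}(3) reduces the action to the shapes \eqref{H.C.mod.1} and \eqref{H.C.mod.2}; in the first, $[A_\lambda[X_\mu X]]$ yields \eqref{C.AXX.A}--\eqref{C.AXX.B}, so $\psi_2=0$ and Lemma~\ref{lem:x+ay+b)f(x+y)=(x+2a'y-y+2b')f(x)} (applied with $a=2$, $b=0$, which rules out its second branch) forces $(\alpha,\beta)=(\frac32,0)$ with $\psi_1$ a nonzero constant, giving $\mathcal{C}_1$ and $\mathcal{C}_2$; in the second, $[B_\lambda[X_\mu X]]$ gives $2\varphi(\lambda)\big(\psi_1(\partial)A+\psi_2(\partial)B\big)=0$ and hence $\mathcal{C}_3$. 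That part is correct and is exactly the paper's route.

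The problem is your trivial-action step. When $[A_\lambda X]=[B_\lambda X]=0$, expanding $[X_\mu X]=\psi_1(\partial)A+\psi_2(\partial)B$ in $[A_\lambda[X_\mu X]]=0$ gives
\begin{equation*}
\psi_1(\partial+\lambda)(\partial+2\lambda)A+\psi_2(\partial+\lambda)[A_\lambda B]=0,
\end{equation*}
which does kill $\psi_1$, but the second term vanishes identically because $[A_\lambda B]=0$ in type C, so it imposes nothing on $\psi_2$; the identities $[B_\lambda[X_\mu X]]$ and $[X_\lambda[X_\mu X]]$ are likewise vacuous since $[B_\lambda B]=[B_\lambda X]=0$. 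Thus ``similarly nothing remains'' is false: for any nonzero $\psi_2(\partial)\in\cp$, the brackets $[A_\lambda A]=(\partial+2\lambda)A$ and $[X_\lambda X]=\psi_2(\partial)B$ with all other components zero satisfy \eqref{skew-symm} and \eqref{jocobi-identity}, and the resulting algebra is not isomorphic to any of $\mathcal{C}_0,\dots,\mathcal{C}_3$, since a parity-preserving isomorphism preserves both the vanishing of $[{R_\ep}_\lambda R_\op]$ and the non-vanishing of $[{R_\op}_\lambda R_\op]$. The computation you had in mind works in type $\mathcal{B}$, where $[B_\lambda B]=(\partial+2\lambda)B$ does force $\psi_2=0$, but it does not transfer here. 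The paper itself never asserts your claim --- it simply splits into ``the trivial case of Remark~\ref{trivial case}'' versus ``nontrivial $R_\ep$-action'' and moves on --- but that dichotomy silently omits the same configuration, so your attempt to justify the step has in fact exposed a case that the stated list does not cover; as written, though, your argument for it is incorrect rather than merely incomplete.
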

	
	\subsection{Type $\mathcal{D}$}
	\hspace{1.5em}In this part, we discuss the case that the even part is of type D. Let $R_\ep$ be a Lie conformal algebra satisfying the relation \eqref{H.D}. Besides the trivial case, we can suppose that the $R_\ep$-action is nontrivial. By \eqref{H.D.mod}, we know that if $(a,b,Q(\partial,\lambda))\neq(1,0,0)$, then the action of $B$ is trivial, namely $[B_\lambda X]=0$. Therefore, we need to discuss the following two cases.
	
	\afCase{1} $(a,b,Q(\partial,\lambda))\neq(1,0,0)$.
	
	We can assume that $[A_\lambda X]=(\partial+\alpha\lambda+\beta)X$ for some $\alpha, \beta\in\bC$ via \eqref{H.D.mod} and $[X_\lambda X]=\psi_1(\partial)A+\psi_2(\partial)B$ just like in \eqref{R.rel.origin}. Using the Jacobi identity $[B_\lambda[X_\mu X]]=[[B_\lambda X]_{\lambda+\mu}X]+[X_\mu[B_\lambda X]]$, we have $(\partial-a\partial-a\lambda+b)\psi_1(\partial+\lambda)B=0$, namely $\psi_1(\partial)=0$. Considering the Jacobi identity $[A_\lambda[X_\mu X]]=[[A_\lambda X]_{\lambda+\mu}X]+[X_\mu[A_\lambda X]]$ and comparing the coefficients of $B$, we get
	\begin{align}\label{D.AXX.B}
		(\partial+a\lambda+b)\psi_2(\partial+\lambda)=(\partial+2\alpha\lambda-\lambda+2\beta)\psi_2(\partial).
	\end{align}
	If $\psi_2(\partial)=0$, the equality \eqref{D.AXX.B} always holds. Otherwise, by Lemma~\ref{lem:x+ay+b)f(x+y)=(x+2a'y-y+2b')f(x)}, we have $\alpha=\frac{a+1}{2}$, $\beta=\frac{b}{2}$ and $\psi_2(\partial)$ is a nonzero constant, or $a=0$, $\alpha=1$, $b=2\beta$ and $\psi_2(\partial)=c(\partial+b)$ for some $c\in\bCx$. Up to isomorphism, we can take $c=1$. Thus, we have the following lemma.
	\begin{lemma}
		Let $R=R_\ep\oplus R_\op$ be a Lie conformal superalgebra of rank $(2+1)$, and $R_\ep$ satisfies the relation \eqref{H.D}. If $(a,b,Q(\partial,\lambda))\neq(1,0,0)$, then $R$ is isomorphic to one of the following Lie conformal superalgebras\textup{:}
		
		\subno{1} $\mathcal{D}_0:=\cp A\oplus\cp B\oplus\cp X$,
		\begin{align*}
			[A_\lambda A]=(\partial+2\lambda&)A+Q_0(\partial,\lambda)B,\ [A_\lambda B]=(\partial+a_0\lambda+b_0)B,\\
			[B_\lambda B]&=[A_\lambda X]=[B_\lambda X]=[X_\lambda X]=0,
		\end{align*}
		where $a_0,b_0\in\bC$ and $Q_0(\partial,\lambda)\in\bC[\partial,\lambda]$ satisfying \textup{\textbf{C1}}.
		
		\subno{2} $\mathcal{D}_1:=\cp A\oplus\cp B\oplus\cp X$,
		\begin{align*}
			[A_\lambda A]=(\partial+2&\lambda)A+Q_1(\partial,\lambda)B,\ [A_\lambda B]=(\partial+a_1\lambda+b_1)B,\ [B_\lambda B]=0,\\ 
			[A_\lambda X]&=(\partial+\alpha\lambda+\beta)X,\ [B_\lambda X]=[X_\lambda X]=0,
		\end{align*}
		where $a_1,b_1\in\bC$ and $Q_1(\partial,\lambda)\in\bC[\partial,\lambda]$ satisfying \textup{\textbf{C1}}, $\alpha,\beta\in\bC$.
		
		\subno{3} $\mathcal{D}_2:=\cp A\oplus\cp B\oplus\cp X$,
		\begin{align*}
			[A_\lambda A]=(\partial&+2\lambda)A+Q_2(\partial,\lambda)B,\ [A_\lambda B]=(\partial+a_2\lambda+b_2)B,\ [B_\lambda B]=0,\\ 
			[A_\lambda X]&=(\partial+\frac{a_2+1}{2}\lambda+\frac{b_2}{2})X,\ [X_\lambda X]=2B,\ [B_\lambda X]=0,
		\end{align*}
		where $a_2,b_2\in\bC$ and $Q_2(\partial,\lambda)\in\bC[\partial,\lambda]$ satisfying \textup{\textbf{C1}}.
		
		\subno{4} $\mathcal{D}_3:=\cp A\oplus\cp B\oplus\cp X$,
		\begin{align*}
			[A_\lambda A]=(\partial&+2\lambda)A+Q_3(\partial,\lambda)B,\ [A_\lambda B]=(\partial+b_3)B,\ [B_\lambda B]=0,\\ 
			[A_\lambda X]&=(\partial+\lambda+\frac{b_3}{2})X,\ [X_\lambda X]=(\partial+b_3)B,\ [B_\lambda X]=0,
		\end{align*}
		where $b_3\in\bC$ and $Q_3(\partial,\lambda)\in\bC[\partial,\lambda]$ satisfying \textup{\textbf{C1}}. 
	\end{lemma}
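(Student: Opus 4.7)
The plan is to parameterize the three unknown brackets involving $X$ and then successively impose Jacobi identities to pin down each coefficient. Since $(a,b,Q(\partial,\lambda))\neq(1,0,0)$, Proposition~\ref{rank2.lie.con.alg.mod}(4) tells us that on any nontrivial rank-one $R_\ep$-module one must have $\gamma=0$, i.e.\ $[B_\lambda X]=0$, and $[A_\lambda X]=(\partial+\alpha\lambda+\beta)X$ for some $\alpha,\beta\in\bC$. The trivial action $[A_\lambda X]=[B_\lambda X]=0$ is also admissible and must be treated separately. In every case I write $[X_\lambda X]=\psi_1(\partial)A+\psi_2(\partial)B$, where the $\lambda$-independence of the $\psi_i$ was already established in the discussion preceding \eqref{R.rel.origin}.

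First I would dispose of the trivial subcase. If $[A_\lambda X]=[B_\lambda X]=0$, the Jacobi identity $[A_\lambda[X_\mu X]]=0$ expands, via $[A_\lambda A]=(\partial+2\lambda)A+Q(\partial,\lambda)B$ and $[A_\lambda B]=(\partial+a\lambda+b)B$, into a polynomial identity whose coefficients of $A$ and $B$ force $\psi_1=0$ and then $\psi_2(\partial+\lambda)(\partial+a\lambda+b)=0$, hence $\psi_2=0$. This yields $\mathcal{D}_0$.

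For the nontrivial subcase I would compute the Jacobi identity $[B_\lambda[X_\mu X]]=[[B_\lambda X]_{\lambda+\mu}X]+[X_\mu[B_\lambda X]]$. With $[B_\lambda X]=0$ the right-hand side vanishes, while conformal sesquilinearity together with the expression of $[B_\lambda A]$ obtained from skew-symmetry gives an identity of the form $(\partial-a\partial-a\lambda+b)\psi_1(\partial+\lambda)B=0$; since $\partial-a\partial-a\lambda+b$ is nonzero in $\bC[\partial,\lambda]$, this forces $\psi_1=0$. The Jacobi identity $[A_\lambda[X_\mu X]]=[[A_\lambda X]_{\lambda+\mu}X]+[X_\mu[A_\lambda X]]$ then produces, after comparing coefficients of $B$, precisely the polynomial relation \eqref{D.AXX.B}: $(\partial+a\lambda+b)\psi_2(\partial+\lambda)=(\partial+(2\alpha-1)\lambda+2\beta)\psi_2(\partial)$.

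The main analytical step is to solve this polynomial identity. If $\psi_2=0$ it holds for arbitrary $\alpha,\beta$, yielding $\mathcal{D}_1$. Otherwise I invoke Lemma~\ref{lem:x+ay+b)f(x+y)=(x+2a'y-y+2b')f(x)} (the genuine obstacle, but now an available tool), which leaves two possibilities: either $\psi_2$ is a nonzero constant with $\alpha=\tfrac{a+1}{2}$ and $\beta=\tfrac{b}{2}$, or $\psi_2(\partial)=k_1(\partial+b)$ with $a=0$, $\alpha=1$, $b=2\beta$. A rescaling $X\mapsto cX$ multiplies $[X_\lambda X]$ by $c^2$ while preserving the form of $[A_\lambda X]$, so I can normalize the nonzero constant to $2$ and $k_1$ to $1$, yielding $\mathcal{D}_2$ and $\mathcal{D}_3$ respectively. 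Since each family was obtained by imposing exactly the Jacobi identities, a last bookkeeping check that the four presentations do define genuine Lie conformal superalgebras (automatic by construction) completes the proof.
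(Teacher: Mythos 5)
Your proposal is correct and follows essentially the same route as the paper: split off the trivial $R_\ep$-action (giving $\mathcal{D}_0$), use Proposition~\ref{rank2.lie.con.alg.mod}(4) to reduce to $[A_\lambda X]=(\partial+\alpha\lambda+\beta)X$, $[B_\lambda X]=0$, kill $\psi_1$ via the Jacobi identity with $B$, derive \eqref{D.AXX.B}, and apply Lemma~\ref{lem:x+ay+b)f(x+y)=(x+2a'y-y+2b')f(x)} followed by rescaling $X$. Your explicit derivation of $[X_\lambda X]=0$ in the trivial subcase and your remark that $(1-a)\partial-a\lambda+b$ is never the zero polynomial are small additions the paper leaves implicit, but the argument is the same.
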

	
	\afCase{2} $(a,b,Q(\partial,\lambda))=(1,0,0)$.
	
	Now, $R_\ep=\cp A\oplus\cp B$ is the Heisenberg--Virasoro Lie conformal algebra $\mathcal{HV}$ (see Example~\ref{HV}). In \cite{WY}, the Lie conformal superalgebra in the form of $R=R_\ep\oplus R_\op$, where $R_\ep=\mathcal{HV}$ and $R_\op=\cp X$, has been classified completely.
	\begin{lemma}\label{lem:D.2}
		\textup{(\cite{WY})} For a Lie conformal superalgebra $R=R_\ep\oplus R_\op$ of rank $(2+1)$, if $R_\ep$ is the Heisenberg--Virasoro Lie conformal algebra, then $R$ is the trivial case or isomorphic to one of the following Lie conformal superalgebras\textup{:}
		
		\subno{1} $\mathcal{D}_4:=\cp A\oplus\cp B\oplus\cp X$,
		\begin{align*}
			[A_\lambda A]=(\partial+2\lambda)A,\ [A_\lambda B]=(\partial+\lambda)B,\ [B_\lambda B]=0,\\ 
			[A_\lambda X]=(\partial+\alpha\lambda+\beta)X,\ [B_\lambda X]=\gamma X,\ [X_\lambda X]=0,
		\end{align*}
		where $\alpha,\beta,\gamma\in\bC$.
		
		\subno{2} $\mathcal{D}_5:=\cp A\oplus\cp B\oplus\cp X$,
		\begin{align*}
			[A_\lambda A]=(\partial+2\lambda)&A,\ [A_\lambda B]=(\partial+\lambda)B,\ [B_\lambda B]=0,\\ 
			[A_\lambda X]=(\partial+\lambda&)X,\ [X_\lambda X]=2B,\ [B_\lambda X]=0.
		\end{align*}
	\end{lemma}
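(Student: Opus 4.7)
The plan is to start from the general ansatz dictated by the preceding classifications. Since $R_\ep=\mathcal{HV}$ corresponds to type D with $(a,b,Q(\partial,\lambda))=(1,0,0)$, Proposition~\ref{rank2.lie.con.alg.mod}(4) tells me I may write
\[
[A_\lambda X]=(\partial+\alpha\lambda+\beta)X,\qquad [B_\lambda X]=\gamma X
\]
for some $\alpha,\beta,\gamma\in\bC$ (where now $\gamma$ is unrestricted because $\mathcal{HV}$ \emph{is} the Heisenberg--Virasoro algebra). The skew-symmetry argument from \eqref{R.rel.origin} already gives $[X_\lambda X]=\psi_1(\partial)A+\psi_2(\partial)B$. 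The task is thus to constrain the quadruple $(\alpha,\beta,\gamma,\psi_1,\psi_2)$ via the Jacobi identity and, after discarding the trivial case where all odd brackets vanish, isolate the surviving configurations.

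The first step is to apply the Jacobi identity $[A_\lambda[X_\mu X]]=[[A_\lambda X]_{\lambda+\mu}X]+[X_\mu[A_\lambda X]]$. Expanding the LHS using conformal sesquilinearity produces $\psi_1(\partial+\lambda)(\partial+2\lambda)A+\psi_2(\partial+\lambda)(\partial+\lambda)B$, while the RHS collapses to $(\partial+(2\alpha-1)\lambda+2\beta)\bigl(\psi_1(\partial)A+\psi_2(\partial)B\bigr)$. Comparing $A$-coefficients yields an equation to which Lemma~\ref{lem:x+ay+b)f(x+y)=(x+2a'y-y+2b')f(x)} applies with $a=2$, $b=0$, forcing $\psi_1$ either to vanish or to be a nonzero constant (with $\alpha=3/2$, $\beta=0$). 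Comparing $B$-coefficients gives the analogous equation with $a=1$, $b=0$, so $\psi_2$ is either zero or a nonzero constant with $\alpha=1$, $\beta=0$. These two "nonzero" branches are incompatible.

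Next I apply $[B_\lambda[X_\mu X]]=[[B_\lambda X]_{\lambda+\mu}X]+[X_\mu[B_\lambda X]]$. Using $[B_\lambda A]=\lambda B$ (from skew-symmetry applied to $[A_\mu B]=(\partial+\mu)B$) and $[B_\lambda B]=0$, the LHS simplifies to $\lambda\psi_1(\partial+\lambda)B$, while the RHS becomes $2\gamma\bigl(\psi_1(\partial)A+\psi_2(\partial)B\bigr)$. The $A$-coefficient identity forces $\gamma\psi_1(\partial)=0$, and the $B$-coefficient identity forces $\lambda\psi_1(\partial+\lambda)=2\gamma\psi_2(\partial)$. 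If $\psi_1$ were the nonzero constant branch above, the last identity would read $\lambda c=0$ for all $\lambda$, a contradiction. Hence $\psi_1\equiv 0$, and the remaining relation reduces to $\gamma\psi_2(\partial)=0$.

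This produces exactly two nontrivial branches. If $\psi_2\equiv 0$, then $\gamma$, $\alpha$, $\beta$ are all unconstrained, and I obtain $\mathcal{D}_4$. If $\psi_2\not\equiv 0$, then $\gamma=0$, $\alpha=1$, $\beta=0$, and $\psi_2$ is a nonzero constant that I normalize to $2$ by rescaling $X$, yielding $\mathcal{D}_5$. A final verification that the remaining Jacobi identity $[A_\lambda[B_\mu X]]=[[A_\lambda B]_{\lambda+\mu}X]+[B_\mu[A_\lambda X]]$ holds automatically in both cases completes the classification. I do not expect a serious obstacle; the main subtlety is recognizing that Lemma~\ref{lem:x+ay+b)f(x+y)=(x+2a'y-y+2b')f(x)} with different values of $a$ for the $A$- and $B$-coefficients is what simultaneously pins down $(\alpha,\beta)$ and rules out the coexistence of nonzero $\psi_1$ and $\psi_2$.
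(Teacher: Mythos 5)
Your proposal is correct. The paper itself gives no proof of this lemma --- it simply cites \cite{WY} --- but your derivation is exactly the natural extension of the paper's own Case 1 analysis of type $\mathcal{D}$: the same ansatz from Proposition~\ref{rank2.lie.con.alg.mod}(4), the same two Jacobi identities, and the same use of Lemma~\ref{lem:x+ay+b)f(x+y)=(x+2a'y-y+2b')f(x)} (with $a=2$ for the $A$-coefficient and $a=1$ for the $B$-coefficient), the only new ingredient being the constraints $\gamma\psi_1=0$ and $\lambda\psi_1(\partial+\lambda)=2\gamma\psi_2(\partial)$ coming from $[B_\lambda A]=\lambda B$ and a possibly nonzero $\gamma$. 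All the computations check out; the only (harmless) omission is that you do not remark that the $[X_\lambda[X_\mu X]]$ identity holds trivially in both branches since $[X_\lambda B]=0$ whenever $\psi_2\neq 0$.
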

	\begin{remark}
		We can see that $\mathcal{D}_5$ is a special case of $\mathcal{D}_2$.
	\end{remark}
	
	\subsection{Brief summary}
	\hspace{1.5em}Finally, according to Lemmas~\ref{lem:O}--\ref{lem:D.2}, we can give the main theorem in this section as follows.
	\begin{theorem}\label{thm:classification}
		Let $R=R_\ep\oplus R_\op$ be a Lie conformal superalgebra that is a free \bZz-graded \cp-module, where $R_\ep=\cp A\oplus\cp B$ and $R_\op=\cp X$. If $R$ is not the trivial case,  then it is isomorphic to one of the following Lie conformal superalgebras\textup{:}
		\[
		\begin{tabular}{cc}
			\toprule
			\phantom{1}characteristic of $R_\ep$ \phantom{12} & \phantom{12}Lie conformal superalgebra $R$\phantom{1}\\
			\midrule
			solvable & $\mathcal{A}_1,\mathcal{A}_2,\mathcal{A}_3,\mathcal{A}_4$\\ 
			via direct sum & $\mathcal{B}_1,\mathcal{B}_2,\mathcal{C}_1,\mathcal{C}_2,\mathcal{C}_3$\\ 
			others & $\mathcal{D}_1,\mathcal{D}_2,\mathcal{D}_3,\mathcal{D}_4$\\ 
			\bottomrule
		\end{tabular}
		\]
		The relations of all nontrivial cases are shown below. The parameters in different algebras are independent of each other and the omitted components vanish or are given by skew-symmetry. 
		
		\subno{1}
		$\mathcal{A}_1:\ [A_\lambda A]=f_1(\partial,\lambda)B,\ [A_\lambda X]=\phi_1(\lambda)X$, for any nonzero skew-symmetric polynomial $f_1(\partial,\lambda)\in\bC[\partial,\lambda]$, $\forall\ \phi_1(\lambda)\in\bC[\lambda]$.
		
		\subno{2}
		$\mathcal{A}_2:\ [A_\lambda A]=f_2(\partial,\lambda)B,\ [X_\lambda X]=\psi(\partial)B$, for any nonzero skew-symmetric polynomial $f_2(\partial,\lambda)\in\bC[\partial,\lambda]$, $\forall\ \psi(\partial)\in\cp$.
		
		\subno{3}
		$\mathcal{A}_3:[A_\lambda B]=2\phi_3(\lambda)B,\ [A_\lambda X]=\phi_3(\lambda)X,\ [X_\lambda X]=2B,\ \forall\ 0\neq \phi_3(\lambda)\in\bC[\lambda]$.
		
		\subno{4}
		$\mathcal{A}_4:\ [A_\lambda B]=p(\lambda)B,\ [A_\lambda X]=\phi_4(\lambda)X$,\ $\forall\ 0\neq p(\lambda)\in\bC[\lambda],\ \phi_4(\lambda)\in\bC[\lambda]$.
		
		\subno{5}
		$\mathcal{B}_1:\  [A_\lambda A]=(\partial+2\lambda)A,\ [B_\lambda B]=(\partial+2\lambda)B,\ [A_\lambda X]=(\partial+\alpha\lambda+\beta)X$, $\forall\ \alpha,\beta\in\bC$.
		
		\subno{6}
		$\mathcal{B}_2:\ [A_\lambda A]=(\partial+2\lambda)A,\ [B_\lambda B]=(\partial+2\lambda)B,\ [A_\lambda X]=(\partial+\frac{3}{2}\lambda)X,\ [X_\lambda X]=2A$.
		
		\subno{7}
		$\mathcal{C}_1:\ [A_\lambda A]=(\partial+2\lambda)A,\ [A_\lambda X]=(\partial+\alpha\lambda+\beta)X,\ \forall\ \alpha,\beta\in\bC$.
		
		\subno{8}
		$\mathcal{C}_2:\ [A_\lambda A]=(\partial+2\lambda)A,\ [A_\lambda X]=(\partial+\frac{3}{2}\lambda)X,\ [X_\lambda X]=2A$.
		
		\subno{9}
		$\mathcal{C}_3:\ [A_\lambda A]=(\partial+2\lambda)A,\ [B_\lambda X]=\varphi(\lambda)X,\ \forall\ 0\neq \varphi(\lambda)\in\bC[\lambda]$.
		
		\subno{10}
		$\mathcal{D}_1:\ [A_\lambda A]=(\partial+2\lambda)A+Q_1(\partial,\lambda)B,\ [A_\lambda B]=(\partial+a\lambda+b)B,\ [A_\lambda X]=(\partial+\alpha\lambda+\beta)X$, where $a,b\in\bC$ and $Q_1(\partial,\lambda)\in\bC[\partial,\lambda]$ satisfying \textup{\textbf{C1}}, $\forall\ \alpha,\beta\in\bC$.
		
		\subno{11}
		$\mathcal{D}_2:\ [A_\lambda A]=(\partial+2\lambda)A+Q_2(\partial,\lambda)B,\ [A_\lambda B]=(\partial+a\lambda+b)B,\ [A_\lambda X]=(\partial+\frac{a+1}{2}\lambda+\frac{b}{2})X,\ [X_\lambda X]=2B$, where $a,b\in\bC$ and $Q_2(\partial,\lambda)\in\bC[\partial,\lambda]$ satisfying \textup{\textbf{C1}}.
		
		\subno{12}
		$\mathcal{D}_3:\ [A_\lambda A]=(\partial+2\lambda)A+Q_3(\partial,\lambda)B,\ [A_\lambda B]=(\partial+b)B,\ [A_\lambda X]=(\partial+\lambda+\frac{b}{2})X,\ [X_\lambda X]=(\partial+b)B$, where $b\in\bC$ and $Q_3(\partial,\lambda)\in\bC[\partial,\lambda]$ satisfying \textup{\textbf{C1}}.
		
		\subno{13}
		$\mathcal{D}_4:\ [A_\lambda A]=(\partial+2\lambda)A,\ [A_\lambda B]=(\partial+\lambda)B,\ [A_\lambda X]=(\partial+\alpha\lambda+\beta)X,\ [B_\lambda X]=\gamma X$, $\forall\ \alpha,\beta,\gamma\in\bC$.
	\end{theorem}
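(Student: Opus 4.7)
The proof is essentially a cataloguing argument: the theorem merely collects the case-by-case analyses carried out in Lemmas~\ref{lem:O}--\ref{lem:D.2}, so the plan is to enumerate the four types of $R_\ep$ furnished by Proposition~\ref{rank2.lie.con.alg} and in each type invoke the corresponding lemma, discarding the branches that fall into the trivial cases described in Remark~\ref{trivial case} and Proposition~\ref{prop:O}.

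First, I would observe that since $R_\ep$ is a free rank two Lie conformal algebra, Proposition~\ref{rank2.lie.con.alg} forces it to be (up to isomorphism) either the commutative one, a nilpotent non-abelian one, a non-nilpotent solvable one, a direct sum of two Virasoro Lie conformal algebras, a direct sum of a commutative rank one and a Virasoro Lie conformal algebra, or one of the type~D algebras. Accordingly I would organize the argument as a four-way split on the type A/B/C/D label. In each branch I am allowed to assume the $\lambda$-bracket relations \eqref{H.A}--\eqref{H.D} for the even part, together with the ansatz \eqref{R.rel.origin} for the mixed and odd-odd brackets, noting that the skew-symmetry forces $\psi_1,\psi_2$ to depend only on $\partial$.

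Next, for each type I would invoke the previously established structural lemma: Lemma~\ref{lem:O} handles the commutative sub-case of type~A (which is trivial); Lemma~\ref{type.A.0} handles the remaining solvable type~A, producing $\mathcal{A}_1,\mathcal{A}_2,\mathcal{A}_3,\mathcal{A}_4$; the type~B lemma produces $\mathcal{B}_0,\mathcal{B}_1,\mathcal{B}_2$, of which $\mathcal{B}_0$ is excluded because its mixed brackets all vanish (trivial by Remark~\ref{trivial case}); the type~C lemma produces $\mathcal{C}_0,\mathcal{C}_1,\mathcal{C}_2,\mathcal{C}_3$, with $\mathcal{C}_0$ again excluded as trivial. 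For type~D I would separate into the two subcases $(a,b,Q(\partial,\lambda))\neq(1,0,0)$ (giving $\mathcal{D}_0,\mathcal{D}_1,\mathcal{D}_2,\mathcal{D}_3$, of which $\mathcal{D}_0$ is trivial) and $(a,b,Q(\partial,\lambda))=(1,0,0)$ (giving $\mathcal{D}_4$ and $\mathcal{D}_5$ by Lemma~\ref{lem:D.2}), and then note that $\mathcal{D}_5$ is the specialization of $\mathcal{D}_2$ at $a=1,b=0,Q_2=0$ and hence may be absorbed into the $\mathcal{D}_2$ family.

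Finally I would verify that the list is complete: every non-trivial possibility for $R$ has appeared in some branch, and the trivial ones have been carefully identified. The only conceptual subtlety — and the closest thing to an obstacle — is checking that one does not need to treat the alternative module structure \eqref{H.B.mod'} separately in type~B; this is handled by the swap $A\leftrightarrow B$, which is an automorphism of $R_\ep$ in that type and therefore yields an isomorphic Lie conformal superalgebra to one already on the list. The rest is just transcribing the explicit brackets from the preceding lemmas into the tabular summary.
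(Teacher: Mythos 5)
Your proposal is correct and follows essentially the same route as the paper: Theorem~\ref{thm:classification} is proved there simply by assembling Lemmas~\ref{lem:O}--\ref{lem:D.2} according to the four-type split of Proposition~\ref{rank2.lie.con.alg}, discarding the trivial branches ($\mathcal{O}_1,\mathcal{O}_2$, $\mathcal{B}_0$, $\mathcal{C}_0$, $\mathcal{D}_0$), absorbing $\mathcal{D}_5$ into $\mathcal{D}_2$, and handling the alternative action \eqref{H.B.mod'} via the swap $A\leftrightarrow B$, exactly as you describe. No gaps.
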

	
	Up to now, we have classified all the Lie conformal superalgebras of rank $(2+1)$ completely. Based on the structures of these Lie conformal superalgebras, we can further classify them into two classes according to the $\lambda$-brackets on the odd part. 
	
	Let $R=R_\ep\oplus R_\op$ be one of the Lie conformal superalgebras $\mathcal{A}_2,\mathcal{A}_3,\mathcal{B}_2,\mathcal{C}_2,\mathcal{D}_2,\mathcal{D}_3$. Then $[{R_\op}_\lambda R_\op]\neq 0$ and the case appearing most frequently is one generator element multiplied by a fixed nonzero constant, which is closely related to the Neveu--Schwarz Lie conformal superalgebra (see Example~\ref{exa:NS}). In particular, for the Lie conformal superalgebra $\mathcal{D}_2$, if we take $(a,b,Q_2(\partial,\lambda))=(1,0,0)$, then we get the \textit{Heisenberg--Virasoro Lie conformal superalgebra}, which was introduced in \cite{CDH}. Its conformal derivations, conformal biderivations, automorphism groups and free rank $(1+1)$ conformal modules were discussed in \cite{WY}.
	
	For the rest, the odd part $R_\op$ is a solvable ideal of $R$, since $[{R_\op}_\lambda R_\op]=0$, and thus $R$ is non-semisimple. If we set the parity of $X\in R_\op$ to be even, then we get a usual Lie conformal algebra. Taking $\mathcal{D}_4$ as an example, we can get a Lie conformal algebra of rank three with free $\cp$-basis $\{A,B,X\}$ and relations
	\begin{align*}
		[A_\lambda A]=(\partial+2\lambda)A,\ [A_\lambda B]=(\partial+\lambda)B,\ [B_\lambda B]=0,\\ 
		[A_\lambda X]=(\partial+\alpha\lambda+\beta)X,\ [B_\lambda X]=\gamma X,\ [X_\lambda X]=0,
	\end{align*}
	where $\alpha,\beta\in\bC$. Actually, it is the $\mathcal{W}(a,b,r)$ Lie conformal algebra (take $\alpha=a$, $\beta=b$, $\gamma=r$) whose all finite nontrivial irreducible conformal modules were classified in \cite{LXY}. In general, any one of this class of Lie conformal superalgebras can be regarded as the \textit{super deformation} of some Lie conformal algebras. The super deformation changes the structure of the original Lie conformal algebra, for instance, it was shown that $\mathcal{D}_4$ contains the odd non-inner derivations in \cite{WY}. In terms of representation, there are many free $(1+1)$ conformal modules that can be constructed (cf. \cite{WWX,WY}). For the finite irreducible conformal modules, some interesting results will be presented later.

	\section{Automorphism groups}\label{sec:auto.groups}
	\hspace{1.5em}This section is devoted to determine the automorphism groups of Lie conformal superalgebras of rank $(2+1)$. Let $R=R_\ep\oplus R_\op$ be a Lie conformal superalgebra of rank $(2+1)$.
	
	Let $\{A,B\}$ be a basis of $R_\ep$, $\{X\}$ of $R_\op$, and take $\sigma\in\Aut(R)$. We assume that
	\begin{align*}
		\sigma(A)=f_A(\partial)A+g_A(\partial)B,\ \sigma(B)=f_B(\partial)A+g_B(\partial)B,
	\end{align*}
	where $f_A(\partial),g_A(\partial),f_B(\partial),g_B(\partial)\in\cp$. Since $\sigma(R_\ep)=R_\ep$ and $R_\ep$ is a usual rank two Lie conformal algebra, we can apply the following lemma.
	
	\begin{lemma}\label{lem:auto.1}\textup{(\cite{BCH})} Let $R_\ep$ be a rank two Lie conformal algebra defined in Proposition~\ref{rank2.lie.con.alg}. Then we have the following statements.
		
		\subno{1} If $R_\ep$ is semisimple, then we have
		\begin{align*}
			\begin{pmatrix}
				f_A(\partial) & g_A(\partial)\\
				f_B(\partial) & g_B(\partial)
			\end{pmatrix}=
			\begin{pmatrix}
				1 & 0\\
				0 & 1
			\end{pmatrix}\text{ or }
			\begin{pmatrix}
				0 & 1\\
				1 & 0
			\end{pmatrix},
		\end{align*}
		i.e., $\Aut(R_\ep)\cong \bZz$.
		
		\subno{2} If $R_\ep$ is nilpotent, then we have
		\begin{align*}
			\Aut(R_\ep)\cong\left\{\begin{pmatrix}
				k & g_A(\partial)\\
				0 & k^2
			\end{pmatrix}\ \Big|\ k\in\bCx,\ g_A(\partial)\in\cp\right\}\cong\bCx\ltimes\cp.
		\end{align*}
		
		\subno{3} If $R_\ep$ is solvable, but not nilpotent, then we have
		\begin{align*}
			\Aut(R_\ep)\cong\left\{\begin{pmatrix}
				1 & kp(-\partial)\\
				0 & k_2
			\end{pmatrix}\ \Big|\ k_2\in\bCx,\ k\in\bC\right\}\cong\bCx\ltimes\bC,
		\end{align*}
		where $p(-\partial)$ is the polynomial only depending on $R_\ep$ (see type A in Proposition~\ref{rank2.lie.con.alg}). 
		
		\subno{4} If $R_\ep$ is the Lie conformal algebra of type C, then we have
		\begin{align*}
			\Aut(R_\ep)\cong\left\{\begin{pmatrix}
				1 & 0\\
				0 & k
			\end{pmatrix}\ \Big|\ k\in\bCx\right\}\cong\bCx.
		\end{align*}
		
		\subno{5} If $R_\ep$ is the Lie conformal of type D, then we have the following conclusions\textup{:}
		
		\hspace{0.25em}\subno{i} If $b\neq 0$, then
		\begin{align*}
			\Aut(R_\ep)\cong\left\{\begin{pmatrix}
				1 & k\left(1-\frac{a-1}{b}\partial\right)\\
				0 & k_2
			\end{pmatrix}\ \Big|\ k_2\in\bCx,\ k\in\bC\right\}\cong \bCx\ltimes\bC.
		\end{align*}
		
		\hspace{0.25em}\subno{ii} If $b=0$, $a\notin\{1,0,-1\}$ and $Q(\partial,\lambda)\neq 0$, then
		\begin{align*}
			\Aut(R_\ep)\cong\left\{\begin{pmatrix}
				1 & k\partial\\
				0 & 1
			\end{pmatrix}\ \Big|\ k\in\bC\right\}\cong\bC.
		\end{align*}
		
		\hspace{0.25em}\subno{iii} If $b=0$, $a\notin\{1,0,-1\}$ and $Q(\partial,\lambda)=0$, then
		\begin{align*}
			\Aut(R_\ep)\cong\left\{\begin{pmatrix}
				1 & k\partial\\
				0 & k_2
			\end{pmatrix}\ \Big|\ k_2\in\bCx,\ k\in\bC\right\}\cong \bCx\ltimes\bC.
		\end{align*}
		
		\hspace{0.25em}\subno{iv} If $b=0$, $a\in\{1,0,-1\}$ and $Q(\partial,\lambda)\neq 0$, then
		\begin{align*}
			\Aut(R_\ep)\cong\left\{\begin{pmatrix}
				1 & g_A(\partial)\\
				0 & 1
			\end{pmatrix}\ \right\}\cong \bC^2,
		\end{align*}
		\hspace{2em}where $g_A(\partial)=a_1\partial+\delta_{a,1}a_0+\delta_{a,0}a_2\partial^2+\delta_{a,-1}a_3\partial^3$, $a_i\in\bC,\ i=0,1,2,3$.
		
		\hspace{0.25em}\subno{v} If $b=0$, $a\in\{1,0,-1\}$ and $Q(\partial,\lambda)=0$, then
		\begin{align*}
			\Aut(R_\ep)\cong\left\{\begin{pmatrix}
				1 & g_A(\partial)\\
				0 & k_2
			\end{pmatrix}\ \Big|\ k_2\in\bCx\right\}\cong \bCx\ltimes\bC^2,
		\end{align*}
		\hspace{2em}where $g_A(\partial)=a_1\partial+\delta_{a,1}a_0+\delta_{a,0}a_2\partial^2+\delta_{a,-1}a_3\partial^3$, $a_i\in\bC,\ i=0,1,2,3$.
	\end{lemma}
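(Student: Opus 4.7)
The plan is to observe that an automorphism $\sigma$ is a graded $\cp$-module map, hence commutes with $\partial$ and is determined by $\sigma(A),\sigma(B)$. I encode these in the matrix
\[
M_\sigma=\begin{pmatrix}f_A(\partial) & g_A(\partial)\\ f_B(\partial) & g_B(\partial)\end{pmatrix}\in M_2(\cp),
\]
whose invertibility forces $\det M_\sigma$ to be a unit in $\cp$, i.e.\ an element of $\bCx$. I then impose, for each of the four types of Proposition~\ref{rank2.lie.con.alg}, the three bracket-preservation identities $\sigma([A_\lambda A])=[\sigma(A)_\lambda\sigma(A)]$, $\sigma([A_\lambda B])=[\sigma(A)_\lambda\sigma(B)]$, $\sigma([B_\lambda B])=[\sigma(B)_\lambda\sigma(B)]$, expanding the right-hand sides via conformal sesquilinearity, which turns $[p(\partial)X_\lambda q(\partial)Y]$ into $p(-\lambda)q(\partial+\lambda)[X_\lambda Y]$. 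Each identity then becomes a polynomial equation in $\partial,\lambda$ with unknowns $f_A,g_A,f_B,g_B\in\cp$.

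For the semisimple case (Type B), the $A$-component of $\sigma([A_\lambda A])=[\sigma(A)_\lambda\sigma(A)]$ reduces to $f_A(\partial)=f_A(-\lambda)f_A(\partial+\lambda)$; comparing degrees in $\lambda$ forces $f_A\in\{0,1\}$, and the same analysis pins down $g_A,f_B,g_B$. Combining with $\sigma([A_\lambda B])=0$ and the invertibility of $M_\sigma$ leaves precisely the two listed matrices, giving $\bZz$. For Types A and C, the relation $[B_\lambda B]=0$ and the restricted shape of $[A_\lambda A]$ force $f_B=0$ (so $\sigma(B)$ is a $\cp$-multiple of $B$); the remaining identities then fix $f_A$ to a constant $k$ and $g_B$ to $k^2$ (respectively $k_2$) governed by the skew-symmetric polynomial appearing in $[A_\lambda A]$ or by $p(\lambda)$, while $g_A(\partial)$ stays free or is restricted to $kp(-\partial)$ by the compatibility of $[A_\lambda B]$. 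This reproduces items (2)--(4).

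The main obstacle is Type D, which splits into the five subcases (i)--(v) according to whether $b=0$, whether $a\in\{1,0,-1\}$, and whether $Q(\partial,\lambda)=0$. After using $\sigma([A_\lambda B])=(\partial+a\lambda+b)\sigma(B)$ together with $\det M_\sigma\in\bCx$ to force $f_A=1$ and $f_B=0$, the $B$-component of $\sigma([A_\lambda A])=[\sigma(A)_\lambda\sigma(A)]$ becomes a functional equation for $g_A(\partial)$ involving $Q(\partial,\lambda)$ and the substitution $\partial\mapsto\partial+\lambda$. Solving this equation in each subcase, with Lemma~\ref{lem:f(x+y)g(x,y)=f(x)h(y)} and the degree-bound argument behind Lemma~\ref{lem:x+ay+b)f(x+y)=(x+2a'y-y+2b')f(x)} as the main tools, reveals exactly which monomials among $1,\partial,\partial^2,\partial^3$ may occur in $g_A$; the special values $a\in\{1,0,-1\}$ activate the summands $\delta_{a,1}a_0$, $\delta_{a,0}a_2\partial^2$, $\delta_{a,-1}a_3\partial^3$, while a nonzero $Q$ rigidifies $g_B$ to the constant $1$. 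The final step, which I expect to be the most delicate, is the bookkeeping that verifies these five families exhaust all solutions and that no further compatibility constraint links $k_2=g_B(0)$ to the coefficients of $g_A$.
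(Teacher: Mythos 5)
The paper does not actually prove this lemma: it is imported verbatim from \cite{BCH} (the same source as Proposition~\ref{rank2.lie.con.alg}), so there is no internal proof to compare your argument against. That said, your strategy --- encode $\sigma$ as a matrix over $\cp$ whose determinant must be a unit, expand the three bracket-preservation identities by sesquilinearity, and solve the resulting polynomial identities type by type --- is the only sensible direct approach, and your treatment of Types A, B and C is essentially complete: the degree argument forcing $f_A\in\{0,1\}$ in the semisimple case, the vanishing of $f_B$ read off from the $A$-component, and the identification $g_A=kp(-\partial)$ from the constraint $[\sigma(A)_\lambda\sigma(A)]=0$ in the solvable non-nilpotent case all check out.

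The gap is in Type D, which is where all the content of the lemma lives. You correctly reduce to $f_A=1$, $f_B=0$, $g_B=k_2$ constant, and to the single functional equation
\begin{equation*}
Q(\partial,\lambda)\,(k_2-1)=g_A(\partial+\lambda)(\partial+a\lambda+b)-g_A(-\lambda)\big((1-a)\partial-a\lambda+b\big)-(\partial+2\lambda)\,g_A(\partial),
\end{equation*}
but you then only describe, rather than carry out, the analysis of its solution set. Two things remain to be proved: (a) that the kernel of the linear map $L\colon g_A\mapsto(\text{right-hand side})$ is exactly $\Span_\bC\{a_1\partial,\ \delta_{a,1}a_0,\ \delta_{a,0}a_2\partial^2,\ \delta_{a,-1}a_3\partial^3\}$ when $b=0$, and the one-dimensional space $\bC\big(1-\tfrac{a-1}{b}\partial\big)$ when $b\neq0$; and (b) that for $Q\neq0$ the polynomial $Q(\partial,\lambda)$ does not lie in the image of $L$, which is what forces $k_2=1$ in subcases (ii) and (iv). Point (b) is not mere bookkeeping: for instance when $a=1$, $b=0$, $Q=c(\partial+2\lambda)$, one must observe that every element of $\im L$ vanishes at $\lambda=0$ while $Q(\partial,0)=c\partial\neq0$. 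Neither Lemma~\ref{lem:f(x+y)g(x,y)=f(x)h(y)} nor Lemma~\ref{lem:x+ay+b)f(x+y)=(x+2a'y-y+2b')f(x)} applies to an equation of this shape (it is linear in $g_A$ with three shifted occurrences, not a product identity), so the tools you name do not by themselves close the argument; an explicit monomial-by-monomial computation of $L$ on $1,\partial,\partial^2,\dots$ for each relevant value of $a$ is still required.
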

	
	On the odd part, we assume that $\sigma(X)=h(\partial)X$, where $h(\partial)\in\cp$. Since $\sigma(X)$ forms a basis of $R_\op$, then $h(\partial)$ is a nonzero constant. This together with Lemma~\ref{lem:auto.1}, we can get the following lemma.
	\begin{lemma}\label{lem:auto.2}
		For some $k_1,k_2,k_3\in\bCx$ and $g(\partial)\in\cp$, we have
		\begin{align*}
			\sigma\begin{pmatrix}
				A\\
				B\\
				X
			\end{pmatrix}=\begin{pmatrix}
				k_1 & g(\partial) & 0\\
				0 & k_2 & 0\\
				0 & 0 & k_3
			\end{pmatrix}
			\begin{pmatrix}
				A\\
				B\\
				X
			\end{pmatrix}.
		\end{align*}
	\end{lemma}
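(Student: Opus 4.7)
The plan is to analyze how $\sigma$ acts on $R_\ep$ and $R_\op$ separately and then piece the results together. Since $\sigma$ preserves parity, $\sigma|_{R_\ep}$ is an automorphism of the rank-two Lie conformal algebra $R_\ep$, so I would invoke Lemma~\ref{lem:auto.1}. For the non-semisimple cases, which are parts (2)--(5) of that lemma and cover types $\mathcal{A}$, $\mathcal{C}$ and $\mathcal{D}$, the matrix already has the upper-triangular shape $\begin{pmatrix} k_1 & g(\partial) \\ 0 & k_2 \end{pmatrix}$ with $k_1,k_2\in\bCx$ and $g(\partial)\in\cp$, so the even block of the claim is immediate (for the nilpotent case one reads off $k_2=k_1^2$, but this is still a particular instance of the stated template).

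The only case requiring extra work is the semisimple case (type $\mathcal{B}$, namely $R\cong\mathcal{B}_1$ or $\mathcal{B}_2$), where Lemma~\ref{lem:auto.1}(1) allows $\sigma|_{R_\ep}$ to be either the identity or the swap $A\leftrightarrow B$. The identity matches the claimed form while the swap does not, so I need to rule out the swap. Writing $\sigma(X)=h(\partial)X$ for some $h(\partial)\in\cp$ and applying $\sigma$ to the identity $[A_\lambda X]=(\partial+\alpha\lambda+\beta)X$ (respectively $(\partial+\tfrac32\lambda)X$ in $\mathcal{B}_2$, both nonzero), the left-hand side equals $(\partial+\alpha\lambda+\beta)h(\partial)X$, while the right-hand side is $[\sigma(A)_\lambda\sigma(X)]=[B_\lambda h(\partial)X]=h(\partial+\lambda)[B_\lambda X]=0$ by the conformal sesquilinearity \eqref{conformal-sesqui} and the relation $[B_\lambda X]=0$ in $\mathcal{B}_1,\mathcal{B}_2$. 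This forces $h(\partial)=0$, contradicting bijectivity of $\sigma$; hence the swap is excluded.

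For the odd part, $\sigma(X)\in R_\op=\cp X$, so write $\sigma(X)=h(\partial)X$. The same grading argument applied to $\sigma^{-1}$ gives $\sigma^{-1}(X)=h'(\partial)X$, and the composition $X=\sigma\sigma^{-1}(X)=h'(\partial)h(\partial)X$ forces $h(\partial)h'(\partial)=1$ in $\cp$, so $h(\partial)=k_3\in\bCx$. Combining the even and odd computations yields the stated block-diagonal matrix.

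I expect the only step with any subtlety to be ruling out the swap in the semisimple case, since every other type-by-type matrix from Lemma~\ref{lem:auto.1} already fits the desired template. Everything else is bookkeeping: reading off the common upper-triangular shape across the five cases and invoking invertibility of $\sigma$ on the rank-one $\cp$-module $R_\op$.
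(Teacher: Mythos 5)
Your proof is correct, and it follows the same basic route as the paper: restrict $\sigma$ to $R_\ep$ and invoke Lemma~\ref{lem:auto.1}, then observe that $\sigma(X)=h(\partial)X$ with $h$ a unit of $\cp$, hence a nonzero constant (your argument via $\sigma^{-1}$ is just a careful spelling-out of the paper's one-line remark that $\sigma(X)$ must be a $\cp$-basis of $R_\op$). The one genuine difference is your treatment of the semisimple case: the paper's justification of this lemma is literally ``Lemma~\ref{lem:auto.1} plus $h(\partial)\in\bCx$,'' which does not by itself exclude the swap $A\leftrightarrow B$ allowed by Lemma~\ref{lem:auto.1}(1); the paper only rules that out later, inside the proof of Theorem~\ref{thm:auto.group}, where it explicitly lists the swap as a second possible matrix for type $\mathcal{B}$ and then kills it via $\sigma[A_\lambda X]=[\sigma(A)_\lambda\sigma(X)]$. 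You perform exactly that exclusion here, using $[A_\lambda X]\neq 0$ and $[B_\lambda X]=0$ together with sesquilinearity, which makes the lemma self-contained and arguably patches a small logical gap in the paper's own presentation. One caveat you should make explicit: this exclusion (and indeed the lemma as stated) requires $R$ to be one of the nontrivial algebras --- for the trivial algebra $\mathcal{B}_0$, where $[A_\lambda X]=[B_\lambda X]=[X_\lambda X]=0$, the swap composed with $X\mapsto k_3X$ \emph{is} an automorphism not of the claimed upper-triangular form, so your restriction to $\mathcal{B}_1,\mathcal{B}_2$ is essential and should be stated as a hypothesis rather than left implicit.
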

	
	Finally, we can give the following two main theorems of this section.
	\begin{theorem}\label{thm:auto.group}
		Let $R$ be the Lie conformal superalgebra of rank $(2+1)$. Using the notations in Theorem~\ref{thm:classification}, we have the following conclusions\textup{:}
		\begin{table}[!ht]
			\begin{minipage}{.45\textwidth}
				\centering
				\begin{tabular}{|c|c|}
					\hline
					\phantom{123456}$R$\phantom{123456}& \phantom{12345}$\Aut(R)$\phantom{12345} \\
					\hline
					$\mathcal{A}_1(\phi_1(\lambda)=0)$ & $(\bCx\ltimes\cp)\times\bCx$\\
					\hline
					$\mathcal{A}_1(\phi_1(\lambda)\neq0)$ & $\cp\times\bCx$\\
					\hline
					$\mathcal{A}_2(\psi(\partial)=0)$ & $(\bCx\ltimes\cp)\times\bCx$\\
					\hline
					$\mathcal{A}_2(\psi(\partial)\neq0)$ & $(\bCx\ltimes\cp)\times\bZz$\\
					\hline
					$\mathcal{A}_3$ & $\bCx\ltimes\bC$\\
					\hline
					$\mathcal{A}_4$ & $(\bCx\ltimes\bC)\times\bCx$\\
					\hline
				\end{tabular}
			\end{minipage}
			\begin{minipage}{.5\textwidth}
				\centering
				\begin{tabular}{|c|c|}
					\hline
					\phantom{1234}$R$\phantom{1234}& \phantom{123}$\Aut(R)$\phantom{123} \\
					\hline
					$\mathcal{B}_1$ & $\bCx$\\
					\hline
					$\mathcal{B}_2$ & $\bZz$\\
					\hline
					$\mathcal{C}_1$ & $\bCx\times\bCx$\\
					\hline
					$\mathcal{C}_2$ & $\bCx\times\bZz$\\
					\hline
					$\mathcal{C}_3$ & $\bCx$\\
					\hline  
				\end{tabular}
			\end{minipage}
		\end{table}
		
	\end{theorem}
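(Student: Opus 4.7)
The plan is to start from Lemma~\ref{lem:auto.2}, which already forces every $\sigma\in\Aut(R)$ to act as the block-upper-triangular matrix with parameters $k_1,k_2,k_3\in\bCx$ and $g(\partial)\in\cp$. For each of the eleven algebras in the table, I would first impose the constraints on $\sigma|_{R_\ep}$ dictated by Lemma~\ref{lem:auto.1} (according to the type A, B, C, or D of $R_\ep$), and then derive the remaining conditions on $k_1,k_2,k_3,g(\partial)$ by requiring preservation of the three $\lambda$-brackets involving the odd generator, i.e. $[A_\lambda X]$, $[B_\lambda X]$, and $[X_\lambda X]$.

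Concretely, for each bracket I expand both sides of $\sigma([u_\lambda v])=[\sigma(u)_\lambda\sigma(v)]$ using conformal sesquilinearity \eqref{conformal-sesqui}; since $\sigma(X)=k_3 X$ and $[B_\lambda X]$, $[X_\lambda X]$ typically vanish or lie in a one-dimensional span, the identity collapses to a polynomial equation that pins down ratios among $k_1,k_2,k_3$ and constrains $g(\partial)$. For example, in $\mathcal{A}_1$ with $\phi_1(\lambda)\neq 0$, the identity $\sigma([A_\lambda X])=[\sigma(A)_\lambda\sigma(X)]$ reduces to $k_1 k_3\phi_1(\lambda)=k_3\phi_1(\lambda)$, forcing $k_1=1$ and then $k_2=k_1^2=1$ by Lemma~\ref{lem:auto.1}(2), leaving $g(\partial)\in\cp$ and $k_3\in\bCx$ free, which yields $\cp\times\bCx$. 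The purely even cases (e.g.\ $\mathcal{A}_1$ and $\mathcal{A}_2$ with trivial odd data, or $\mathcal{B}_1,\mathcal{C}_1,\mathcal{C}_3$) cost only an extra $\bCx$-factor coming from $k_3$.

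The more delicate cases are those with $[X_\lambda X]\neq 0$, namely $\mathcal{A}_2(\psi\neq 0)$, $\mathcal{A}_3$, $\mathcal{B}_2$, and $\mathcal{C}_2$, since the identity $\sigma([X_\lambda X])=[\sigma(X)_\lambda\sigma(X)]$ couples $k_3^2$ to a product of the even parameters. In $\mathcal{A}_2(\psi\neq 0)$, after using $k_2=k_1^2$, the equation $k_3^2\psi(\partial)=k_1^2\psi(\partial)$ gives $k_3=\pm k_1$, producing the $\bZz$ factor; in $\mathcal{B}_2$ and $\mathcal{C}_2$ the same quadratic phenomenon yields the two tabulated $\bZz$'s (together with the $\bZz$ swap in $\mathcal{B}_2$ forced by Lemma~\ref{lem:auto.1}(1), which must then be ruled out by the $[A_\lambda X]$ constraint). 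For $\mathcal{A}_3$, combining Lemma~\ref{lem:auto.1}(3) with the three odd-bracket identities forces $k_3^2=k_2$ and ties $g(\partial)$ to the single free scalar, giving $\bCx\ltimes\bC$.

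Once all constraints are collected, I would verify, for each case, that the surviving parameter set really does define an automorphism (by checking the remaining brackets, which are then automatically preserved) and identify the resulting matrix subgroup with the stated direct or semidirect product. The main obstacle is not mathematical depth but bookkeeping: keeping the eleven cases straight, tracking which semidirect-product structure is inherited from $\Aut(R_\ep)$, and making sure every nontrivial bracket is tested so that no constraint coupling $k_3$ to $(k_1,k_2,g(\partial))$ is overlooked.
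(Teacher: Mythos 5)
Your proposal is correct and follows essentially the same route as the paper: start from the matrix form of Lemma~\ref{lem:auto.2}, restrict the even block via Lemma~\ref{lem:auto.1} according to the type of $R_\ep$, and then extract the remaining constraints on $k_1,k_2,k_3,g(\partial)$ from $\sigma([A_\lambda X])=[\sigma(A)_\lambda\sigma(X)]$, $\sigma([B_\lambda X])=[\sigma(B)_\lambda\sigma(X)]$ and $\sigma([X_\lambda X])=[\sigma(X)_\lambda\sigma(X)]$. Your sample computations (the relation $k_1k_3\phi_1(\lambda)=k_3\phi_1(\lambda)$ for $\mathcal{A}_1$, the quadratic constraint $k_3^2=k_2$ in the cases with $[X_\lambda X]\neq 0$, and ruling out the swap automorphism of $R_\ep$ in type $\mathcal{B}$ via the $[A_\lambda X]$ bracket) coincide with the ones in the paper's proof.
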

	\begin{proof}
		Employ the notations in Lemma~\ref{lem:auto.2}. If $R_\ep$ is nilpotent, by Lemma~\ref{lem:auto.1}(2), we have $k_1=k$ and $k_2=k^2$ for some $k\in\bCx$. For $\mathcal{A}_1$, considering $\sigma[A_\lambda X]=[\sigma(A)_\lambda\sigma(X)]$ and comparing the coefficients of $X$, we can obtain $kk_3\phi_1(\lambda)=k_3\phi_1(\lambda)$. If $\phi_1(\lambda)=0$, then $k$ and $k_3$ can be any nonzero complex number. Otherwise, we have $k=1$ since $k_3\neq 0$. Thus we get the conclusion. 
		
		For $\mathcal{A}_2$, if $\psi(\partial)=0$, then $\Aut(\mathcal{A}_2)\cong\Aut(\mathcal{A}_1)$. Otherwise, using $\sigma[X_\lambda X]=[\sigma(X)_\lambda\sigma(X)]$ and comparing the coefficients of $B$, we have $k_3^2=k^2$. It follows that
		\begin{align*}
			\sigma\begin{pmatrix}
				A\\
				B\\
				X
			\end{pmatrix}=\begin{pmatrix}
				k & g(\partial) & 0\\
				0 & k^2 & 0\\
				0 & 0 & k
			\end{pmatrix}
			\begin{pmatrix}
				A\\
				B\\
				X
			\end{pmatrix}\text{ or }\begin{pmatrix}
				k & g(\partial) & 0\\
				0 & k^2 & 0\\
				0 & 0 & -k
			\end{pmatrix}
			\begin{pmatrix}
				A\\
				B\\
				X
			\end{pmatrix}
		\end{align*}
		and we get the automorphism groups of $\mathcal{A}_2$ in the table.
		
		If $R_\ep$ is solvable but not nilpotent, by Lemma~\ref{lem:auto.1}(3), we have $k_1=1$ and $g(\partial)=kp(-\partial)$ for some $k\in\bC$. Note that $\sigma[A_\lambda X]=[\sigma(A)_\lambda\sigma(X)]$ always holds for any $k_3\in\bCx$. So we can get $\Aut(\mathcal{A}_4)$ directly. For $\mathcal{A}_3$, we have $k_3^2=k_2$ by $\sigma[X_\lambda X]=[\sigma(X)_\lambda\sigma(X)]$ and $[X_\lambda X]=2B$, namely
		\begin{align*}
			\sigma\begin{pmatrix}
				A\\
				B\\
				X
			\end{pmatrix}=\begin{pmatrix}
				1 & 2k\phi(-\partial) & 0\\
				0 & k_3^2 & 0\\
				0 & 0 & k_3
			\end{pmatrix}
			\begin{pmatrix}
				A\\
				B\\
				X
			\end{pmatrix}.
		\end{align*}
		We can deduce that $\Aut(\mathcal{A}_3)\cong\bCx\ltimes\bC$.
		
		If $R_\ep$ is the direct sum of two Virasoro Lie conformal algebras, by Lemma~\ref{lem:auto.1}(1), we have 
		\begin{align*}
			\sigma\begin{pmatrix}
				A\\
				B\\
				X
			\end{pmatrix}=\begin{pmatrix}
				1 & 0 & 0\\
				0 & 1 & 0\\
				0 & 0 & k_3
			\end{pmatrix}
			\begin{pmatrix}
				A\\
				B\\
				X
			\end{pmatrix}\text{ or }\begin{pmatrix}
				0 & 1 & 0\\
				1 & 0 & 0\\
				0 & 0 & k_3
			\end{pmatrix}
			\begin{pmatrix}
				A\\
				B\\
				X
			\end{pmatrix}.
		\end{align*}
		For $\mathcal{B}_1$, we will verify the above two cases respectively. Considering $\sigma[A_\lambda X]=[\sigma(A)_\lambda\sigma(X)]$ and comparing the coefficients of $X$, we have $k_3(\partial+\alpha\lambda+\beta)=0$, which is impossible. So the second case does not exist and we get $\Aut(\mathcal{B}_1)\cong\bCx$. For $\mathcal{B}_2$, similar to $\mathcal{B}_1$, we only need to discuss the first case. Applying $\sigma$ to $[X_\lambda X]=2A$ and comparing the coefficients of $A$, we obtain $k_3^2=1$. It implies $\Aut(\mathcal{B}_2)\cong\bZz$.
		
		If $R_\ep$ is of type C, by Lemma~\ref{lem:auto.1}(4), we have 
		\begin{align*}
			\sigma\begin{pmatrix}
				A\\
				B\\
				X
			\end{pmatrix}=\begin{pmatrix}
				1 & 0 & 0\\
				0 & k & 0\\
				0 & 0 & k_3
			\end{pmatrix}
			\begin{pmatrix}
				A\\
				B\\
				X
			\end{pmatrix},
		\end{align*}
		for some $k,k_3\in\bCx$. For $\mathcal{C}_1$, we find that $k$ and $k_3$ can be taken any nonzero constants by direct computation. For $\mathcal{C}_2$, applying $\sigma$ to $[X_\lambda X]=2A$ and comparing the coefficients of $A$, we have $k_3^2=1$. For $\mathcal{C}_3$, considering $\sigma[B_\lambda X]=[\sigma(B)_\lambda\sigma(X)]$ and comparing the coefficients of $X$, we get $kk_3\varphi(\lambda)=k_3\varphi(\lambda)$. It follows that $k=1$ since $k_3\neq 0$ and $\varphi(\lambda)\neq 0$. Therefore, we get the automorphism groups of $\mathcal{C}_1$, $\mathcal{C}_2$ and $\mathcal{C}_3$, respectively.
	\end{proof}
	
	\begin{theorem}
		Using the notations in Theorem~\ref{thm:classification}, we have the following conclusions\textup{:}
		
		\subno{1} $\Aut(\mathcal{D}_1)\cong\begin{cases}
			(\bCx\ltimes\bC)\times\bCx,\ &b\neq0,\\
			\bC\times\bCx,\ &b=0,\ a\notin\{1,0,-1\},\ Q_1(\partial,\lambda)\neq0,\\
			(\bCx\ltimes\bC)\times\bCx,\ &b=0,\ a\notin\{1,0,-1\},\ Q_1(\partial,\lambda)=0,\\
			\bC^2\times\bCx,\ &b=0,\ a\in\{1,0,-1\},\ Q_1(\partial,\lambda)\neq0,\\
			(\bCx\ltimes\bC^2)\times\bCx,\ &b=0,\ a\in\{1,0,-1\},\ Q_1(\partial,\lambda)=0.
		\end{cases}$
		
		\subno{2} $\Aut(\mathcal{D}_2)\cong\begin{cases}
			\bCx\ltimes\bC,\ &b\neq0,\\
			\bC,\ &b=0,\ a\notin\{1,0,-1\},\ Q_2(\partial,\lambda)\neq0,\\
			\bCx\ltimes\bC,\ &b=0,\ a\notin\{1,0,-1\},\ Q_2(\partial,\lambda)=0,\\
			\bC^2,\ &b=0,\ a\in\{1,0,-1\},\ Q_2(\partial,\lambda)\neq0,\\
			\bCx\ltimes\bC^2,\ &b=0,\ a\in\{1,0,-1\},\ Q_2(\partial,\lambda)=0.
		\end{cases}$
		
		\subno{3} $\Aut(\mathcal{D}_3)\cong\begin{cases}
			\bCx\ltimes\bC,\ &b\neq0,\\
			\bC^2,\ &b=0,\ Q_3(\partial,\lambda)\neq0,\\
			\bCx\ltimes\bC^2,\ &b=0,\ Q_3(\partial,\lambda)=0.
		\end{cases}$
		
		\subno{4} $\Aut(\mathcal{D}_4)\cong\begin{cases}
			\bCx,\ &\gamma\neq0,\\
			(\bCx\ltimes\bC^2)\times\bCx,\ &\gamma=0.
		\end{cases}$
	\end{theorem}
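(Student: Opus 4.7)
The plan is to extend the strategy of Theorem~\ref{thm:auto.group} case by case to each $\mathcal{D}_i$, combining Lemma~\ref{lem:auto.1}(5) with the compatibility constraints coming from the odd-part brackets. By Lemma~\ref{lem:auto.2}, any $\sigma\in\Aut(R)$ has the form $\sigma(A)=k_1 A+g(\partial)B$, $\sigma(B)=k_2 B$, $\sigma(X)=k_3 X$, with $k_1,k_2,k_3\in\bCx$ and $g(\partial)\in\cp$. Since each $\mathcal{D}_i$ has $R_\ep$ of type D, Lemma~\ref{lem:auto.1}(5) already pins down the possible shapes of $(k_1,k_2,g)$ in terms of $(a,b,Q_i)$; in particular $k_1=1$ in every subcase. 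The remaining task is to track the conditions imposed on $(k_2,k_3,g)$ by applying $\sigma$ to the relations $[A_\lambda X]$, $[B_\lambda X]$ and $[X_\lambda X]$, and then to feed the five subcases of Lemma~\ref{lem:auto.1}(5) into the result.

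For $\mathcal{D}_1$, the relations $[B_\lambda X]=0=[X_\lambda X]$ are preserved automatically, and because $[B_\lambda X]=0$ the $g(\partial)B$-term in $\sigma(A)$ acts trivially on $\sigma(X)$ via conformal sesquilinearity; hence $\sigma[A_\lambda X]=[\sigma(A)_\lambda \sigma(X)]$ collapses to the tautology $k_3(\partial+\alpha\lambda+\beta)X=k_3(\partial+\alpha\lambda+\beta)X$. No extra constraint is imposed on $k_3$, so $\Aut(\mathcal{D}_1)\cong\Aut(R_\ep)\times\bCx$, and the five subcases listed in (1) are read directly off Lemma~\ref{lem:auto.1}(5).

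For $\mathcal{D}_2$ and $\mathcal{D}_3$, the bracket $[X_\lambda X]$ is nonzero, equal to $2B$ and $(\partial+b)B$ respectively. Applying $\sigma$ and matching coefficients of $B$ yields the single relation $k_3^2=k_2$; the $[A_\lambda X]$ identity is again automatic as in $\mathcal{D}_1$ since $[B_\lambda X]=0$. Thus the $k_3$ parameter is tied to $k_2$ via the square-root relation, and the automorphism group is obtained from $\Aut(R_\ep)$ by replacing the free factor $k_2\in\bCx$ (when present) by $k_3\in\bCx$, while the $\bC$ or $\bC^2$ part arising from $g(\partial)$ is carried over unchanged; when $k_2=1$ is forced by Lemma~\ref{lem:auto.1}(5), the equation $k_3^2=1$ then fixes the extra freedom. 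For $\mathcal{D}_4$, Lemma~\ref{lem:auto.1}(5)(v) gives $\Aut(R_\ep)\cong\bCx\ltimes\bC^2$ with $g(\partial)=a_0+a_1\partial$. The vanishing $[X_\lambda X]=0$ is automatic; $\sigma[B_\lambda X]=[\sigma(B)_\lambda\sigma(X)]$ forces $\gamma k_3=\gamma k_2 k_3$, and $\sigma[A_\lambda X]$ produces an extra term $\gamma\,g(-\lambda)k_3 X$. When $\gamma\neq 0$ these force $k_2=1$ and $g=0$, leaving only $k_3\in\bCx$, so $\Aut(\mathcal{D}_4)\cong\bCx$; when $\gamma=0$ both identities are vacuous and $\Aut(\mathcal{D}_4)\cong(\bCx\ltimes\bC^2)\times\bCx$.

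The main obstacle is the bookkeeping across the five subcases of Lemma~\ref{lem:auto.1}(5) inside each of $\mathcal{D}_1$, $\mathcal{D}_2$ and $\mathcal{D}_3$: the $\lambda$-bracket verifications themselves are light, but one must repeatedly check that the polynomial $g(\partial)$ permitted by the even-part automorphism group remains compatible with the odd brackets (which is automatic here thanks to $[B_\lambda X]=0$ in $\mathcal{D}_1$–$\mathcal{D}_3$), and carefully decide in each subcase whether the $k_3$ freedom is genuinely new or is absorbed into the existing $k_2\in\bCx$ via $k_3^2=k_2$. The $\gamma$-split in $\mathcal{D}_4$ is handled separately and is the only place where a nontrivial sesquilinearity cancellation is really exploited.
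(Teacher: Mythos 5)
Your proposal follows essentially the same route as the paper's proof: Lemma~\ref{lem:auto.2} combined with Lemma~\ref{lem:auto.1}(5) for the even part, the observation that $\sigma[A_\lambda X]=[\sigma(A)_\lambda\sigma(X)]$ holds automatically for $\mathcal{D}_1$--$\mathcal{D}_3$ (so that only $[X_\lambda X]$ contributes the constraint $k_3^2=k_2$, and nothing constrains $k_3$ for $\mathcal{D}_1$), and the $\gamma$-split for $\mathcal{D}_4$ via the two conditions $k_3(a_0-a_1\lambda)\gamma=0$ and $k_2k_3\gamma=k_3\gamma$. The one place where you are slightly loose --- in the same way the paper is --- is the remark that $k_3^2=1$ ``fixes the extra freedom'' when $k_2=1$ is forced: it in fact leaves the two choices $k_3=\pm1$, so strictly a residual $\bZz$ factor should be accounted for there; this is a shared imprecision rather than a divergence of method.
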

	\begin{proof}
		Employ the notations in Lemma~\ref{lem:auto.2}. For (1),  we find that $\sigma[A_\lambda X]=[\sigma(A)_\lambda\sigma(X)]$ always holds for any $k_3\in\bCx$ by direct computation. Hence, the automorphism groups of $\mathcal{D}_1$ is the direct product of the automorphism groups of $R_\ep$ and $\bCx$. By Lemma~\ref{lem:auto.1}(5), we can get the conclusion.
		
		For (2), on the one hand, $\sigma[A_\lambda X]=[\sigma(A)_\lambda\sigma(X)]$ is always satisfied. On the other hand, applying $\sigma$ to $[X_\lambda X]=2B$ and comparing the coefficients of $B$, we have $k_3^2=k_2$. It implies that the automorphism groups of $\mathcal{D}_2$ is just the automorphism groups of $R_\ep$ and we get (2). Similarly, we can get (3) by putting $a=0$. 
		
		For (4), applying $\sigma$ to $[A_\lambda X]=(\partial+\alpha\lambda+\beta)X$ and $[B_\lambda X]=\gamma X$, and comparing the coefficients of $X$, respectively, by Lemma~\ref{lem:auto.1}, we obtain $k_3(-a_1\lambda+a_0)\gamma=0$ and $k_2k_3\gamma=k_3\gamma$. It follows that $\gamma=0$ or $a_0=a_1=0$ and $k_2=1$. Hence, if $\gamma=0$, then $\Aut(\mathcal{D}_4)\cong(\bCx\ltimes\bC^2)\times\bCx$. Otherwise, $\Aut(\mathcal{D}_4)\cong\bCx$.
	\end{proof}
	
	\section{Representations of Lie conformal superalgebras of rank $(2+1)$}
	\hspace{1.5em}In this section, we aim to classify the finite irreducible conformal modules over the Lie conformal superalgebras of rank $(2+1)$. Since we are only interested in finite nontrivial irreducible conformal modules (FNICMs), we always assume the conformal module is free from Lemma~\ref{lem:free.finite.rank.mod}.
	
	This section is organized roughly as follows. Firstly, we list all FNICMs over them, which is the main result in this section. Then, we prove our results and the case of type $\mathcal{D}$ is the focus of our discussion. Finally, some corollaries are presented.
	
	\subsection{Classifications of finite nontrivial irreducible conformal modules (FNICMs)}
	\hspace{1.5em}Let $R=R_\ep\oplus R_\op$ be a Lie conformal superalgebra of rank $(2+1)$, where $R_\ep=\cp A\oplus \cp B$ and $R_\op=\cp X$. Define the following conformal modules over $R$ (the parameters in different modules are independent):
	
	\subno{1} $N=\cp v$ with the action of $R$ on it satisfying
	\begin{align*}
		A_\lambda v=\mathfrak{f}(\lambda)v,\ \ B_\lambda v=\mathfrak{g}(\lambda)v,\ \ X_\lambda v=0,
	\end{align*}
	where $\mathfrak{f}(\lambda), \mathfrak{g}(\lambda)\in\bC[\lambda]$ are not zero simultaneously. If $\mathfrak{g}(\lambda)=0$ (resp., $\mathfrak{f}(\lambda)=0$), we rewrite $N$ by $N_\mathfrak{f}$ (resp., $N_\mathfrak{g}$).
	\begin{remark}
		Note that the conformal module over a Lie conformal superalgebra is also $\bZz$-graded. So, more explicitly, $N=\cp v_\ep$ or $N=\cp v_\op$. They are isomorphic to each other under the parity change. For convenience, here and from now on, we omit the parity of $v$ for any rank one conformal module.
	\end{remark}
	
	\subno{2} $M_{A,\Delta,\eta}=\cp v$ for some $\Delta\in\bCx$, $\eta\in\bC$, with $\lambda$-actions:
	\begin{align*}
		A_\lambda v=(\partial+\Delta\lambda+\eta)v,\ \ B_\lambda v=X_\lambda v=0.
	\end{align*}
	
	\subno{3} $M_{B,\Delta,\eta}=\cp v$ for some $\Delta\in\bCx$, $\eta\in\bC$, with $\lambda$-actions:
	\begin{align*}
		B_\lambda v=(\partial+\Delta\lambda+\eta)v,\ \ A_\lambda v=X_\lambda v=0.
	\end{align*}
	
	\subno{4} $M_{\Delta,\eta}^+=\cp v_\ep\oplus\cp v_\op$ for some $\Delta\in\bCx$, $\eta\in\bC$, with $\lambda$-actions:
	\begin{align*}
		\begin{cases}
			A_\lambda v_\ep=(\partial+\Delta\lambda+\eta)v_\ep,\ A_\lambda v_\op=\big(\partial+(\Delta+\frac{1}{2})\lambda+\eta\big)v_\op,\\
			B_\lambda v_\ep=B_\lambda v_\op=0,\\
			X_\lambda v_\ep=v_\op,\ X_\lambda v_\op=(\partial+2\Delta\lambda+\eta)v_\ep.
		\end{cases}
	\end{align*}
	
	\subno{5} $M_{\Delta,\eta}^-=\cp v_\ep\oplus\cp v_\op$ for some $\frac{1}{2}\neq\Delta\in\bC$, $\eta\in\bC$, with $\lambda$-actions:
	\begin{align*}
		\begin{cases}
			A_\lambda v_\ep=(\partial+\Delta\lambda+\eta)v_\ep,\ A_\lambda v_\op=\big(\partial+(\Delta-\frac{1}{2})\lambda+\eta\big)v_\op,\\
			B_\lambda v_\ep=B_\lambda v_\op=0,\\
			X_\lambda v_\ep=\big(\partial+(2\Delta-1)\lambda+\eta\big)v_\op,\ X_\lambda v_\op=v_\ep.
		\end{cases}
	\end{align*}
	
	\subno{6} $M_{\Delta,\eta,\omega}=\cp v$ for some $\Delta,\eta\in\bC,\omega\in\bCx$, with $\lambda$-actions:
	\begin{equation*}
		A_\lambda v=(\partial+\Delta\lambda+\eta)v,\ \ B_\lambda v=\omega v,\ X_\lambda v=0.
	\end{equation*}
	
	\subno{7} $M_{\Delta,\zeta,c,\epsilon}=\cp v_\ep\oplus\cp v_\op$ with $\lambda$-actions\textup{:}
	\begin{align*}
		\begin{cases}
			A_\lambda v_\ep=(\partial+\Delta\lambda+\zeta)v_\ep,\ A_\lambda v_\op=(\partial+\Delta\lambda+\zeta)v_\op,\\
			B_\lambda v_\ep=c\epsilon v_\ep,\ B_\lambda v_\op=c\epsilon v_\op,\\
			X_\lambda v_\ep=cv_\op,\phantom{c}\ X_\lambda v_\op=\epsilon v_\ep,
		\end{cases}
	\end{align*}
	where $\Delta,\zeta\in\bC$, $c,\epsilon\in\bCx$.

	For the Lie conformal superalgebra $\mathcal{D}_2$, redenote it by $\mathcal{HVS}$ if $(a,b,Q_2(\partial,\lambda))=(1,0,0)$, otherwise by $\bar{\mathcal{D}}_2$. The following theorem is our main result in this section.
	\begin{theorem}\label{thm:FNICM}
		Use the notations in Theorem~\ref{thm:classification}. Then we have the following classification of finite nontrivial irreducible conformal modules over Lie conformal superalgebra of rank $(2+1)$\textup{:}
		\begin{table}[!ht]
			\centering
			\begin{tabular}{|c|c|}    
				\hline
				Lie conformal superalgebra & \phantom{1234567}FNICM(s)\phantom{1234567}\\
				\hline
				
				$\mathcal{A}_1$, $\mathcal{A}_2$, $\mathcal{A}_3$, $\mathcal{A}_4$ & $N_\mathfrak{f}$\\ \hline
				$\mathcal{B}_1$ & $M_{A,\Delta,\eta}$, $M_{B,\Delta,\eta}$\\ \hline
				$\mathcal{B}_2$ & $M_{B,\Delta,\eta}$, $M^+_{\Delta,\eta}$, $M^-_{\Delta,\eta}$\\ \hline
				$\mathcal{C}_1$, $\mathcal{C}_3$ & $N_\mathfrak{g}$, $M_{A,\Delta,\eta}$\\ \hline
				$\mathcal{C}_2$ & $N_\mathfrak{g}$, $M^+_{\Delta,\eta}$, $M^-_{\Delta,\eta}$\\ \hline
				$\mathcal{D}_1$, $\mathcal{D}_4$ & $M_{A,\Delta,\eta}$, $M_{\Delta,\eta,\omega}$\\ \hline
				$\mathcal{HVS}$ & $M_{A,\Delta,\eta}$, $M_{\Delta,\zeta,c,\epsilon}$\\ \hline
				$\bar{\mathcal{D}}_2$, $\mathcal{D}_3$ & $M_{A,\Delta,\eta}$\\ 
				\hline
			\end{tabular}
		\end{table}
		
	\end{theorem}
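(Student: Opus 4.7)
The plan is to go case by case through the 13 nontrivial Lie conformal superalgebras listed in Theorem~\ref{thm:classification}, exploiting the fact that any conformal $R$-module $M=M_\ep\oplus M_\op$ is automatically a conformal module over the rank-two Lie conformal algebra $R_\ep$. By Proposition~\ref{rank2.lie.rank1.mod}, every nonzero $\bZz$-homogeneous irreducible $R_\ep$-summand of $M$ is free of rank one over $\cp$, and Lemma~\ref{lem:free.finite.rank.mod} rules out torsion. Hence an FNICM $M$ has rank $(1+0)$, $(0+1)$, or $(1+1)$, and the even actions on each $\cp$-summand are pinned down by the rank-one module profiles in Proposition~\ref{rank2.lie.con.alg.mod}.

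For the rank-one cases, write $M=\cp v$ with a fixed parity. Because $X\cdot v$ lives in the opposite-parity component, which is zero, the action $X_\lambda v=0$ is forced. The remaining constraint comes from $[X_\lambda X]_{\lambda+\mu}v=X_\lambda(X_\mu v)+X_\mu(X_\lambda v)=0$, which requires $\psi_1(\partial)A+\psi_2(\partial)B$ to act trivially on $v$. Running this constraint against the profiles in Proposition~\ref{rank2.lie.con.alg.mod} and the nontriviality requirement eliminates most candidates: for instance, in $\mathcal{B}_2$ where $[X_\lambda X]=2A$, the profile $A_\lambda v=(\partial+\Delta\lambda+\eta)v$ is ruled out, leaving only $M_{B,\Delta,\eta}$ among rank-one modules. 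Performing this sieve across all 13 algebras produces the rank-one column of the table.

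For the rank-$(1+1)$ case, $M=\cp v_\ep\oplus\cp v_\op$, and nontrivial odd action together with irreducibility force $[X_\lambda X]\neq 0$, so only $\mathcal{B}_2$, $\mathcal{C}_2$, $\mathcal{HVS}$, $\bar{\mathcal{D}}_2$, and $\mathcal{D}_3$ remain live. Write $X_\lambda v_\ep=p(\partial,\lambda)v_\op$ and $X_\lambda v_\op=q(\partial,\lambda)v_\ep$. The Jacobi identities with $A$ and $B$, together with the super $XX$-relation $[X_\lambda X]_{\lambda+\mu}v_\sigma=X_\lambda(X_\mu v_\sigma)+X_\mu(X_\lambda v_\sigma)$, give two-variable polynomial equations in $p,q$. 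For $\mathcal{B}_2$ and $\mathcal{C}_2$ these solve to the two Neveu--Schwarz-type modules of Proposition~\ref{ns.mod}, yielding $M^\pm_{\Delta,\eta}$. For $\mathcal{HVS}$, the central-like parameter $\gamma$ from $[B_\lambda X]=\gamma X$ combined with $[X_\lambda X]=2B$ allows constant solutions $p=c$, $q=\epsilon$ with $B_\lambda v_\bullet=c\epsilon v_\bullet$, producing $M_{\Delta,\zeta,c,\epsilon}$. For $\bar{\mathcal{D}}_2$ and $\mathcal{D}_3$ the system collapses to $p=q=0$, hence no rank-$(1+1)$ FNICM exists.

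The main obstacle will be $\mathcal{HVS}$: here the Jacobi identities couple the Virasoro weight $\Delta$, the Heisenberg-like scalar $\zeta$, and the exchange constants $c,\epsilon$ in an entangled way, so disentangling them requires the super analogue of Lie's theorem (to simultaneously scalar-ize the $B$-action on $v_\ep$ and $v_\op$) followed by careful two-variable polynomial manipulations of the same flavor as Lemmas~\ref{lem:f(x+y)g(x,y)=f(x)h(y)} and \ref{lem:x+ay+b)f(x+y)=(x+2a'y-y+2b')f(x)}. A secondary difficulty is showing, for $\bar{\mathcal{D}}_2$, $\mathcal{D}_3$, and the Type $\mathcal{A}$ algebras, that no rank-$(1+1)$ FNICM exists; this reduces to verifying the polynomial system either forces $p=q=0$ or contradicts the module profiles of Proposition~\ref{rank2.lie.con.alg.mod}. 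Finally, irreducibility of the constructed modules on the advertised parameter ranges and pairwise nonisomorphism within each family follow from a direct comparison of $\lambda$-actions on a cyclic generator.
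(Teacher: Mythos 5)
Your reduction of the problem to modules of rank $(1{+}0)$, $(0{+}1)$ or $(1{+}1)$ is the crux of the theorem, and the step you use to get it does not work. Proposition~\ref{rank2.lie.rank1.mod} applies to \emph{irreducible} conformal modules over the rank two Lie conformal algebra $R_\ep$; but the restriction of an irreducible $R$-module $M$ to $R_\ep$ need not be irreducible, and conformal modules over non-semisimple conformal algebras are not completely reducible, so $M$ does not decompose into ``irreducible $R_\ep$-summands'' whose ranks you can add up. A priori $M_\ep$ could have an $R_\ep$-composition series of any length, and nothing in your argument bounds the rank of $M$. This is exactly the point where the paper invests all of its technical effort for the Type~$\mathcal{D}$ algebras: it passes to the extended annihilation superalgebra $\Lie(R)^e$, builds the filtration $\mathcal{L}_n$, invokes Cheng--Kac's lemma (Lemma~\ref{CK.lemma}) to produce a nonzero finite-dimensional subspace $M_N$ with $M=\cp M_N$, and then uses the super analogue of Lie's theorem (Lemma~\ref{lem:lie.sup.thm.app}) together with Lemma~\ref{ck.erratum} to show $\dim M_N\le 2$ (with $\dim M_N=2$ only possible for $\mathcal{HVS}$). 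Without some substitute for this machinery, your case analysis for $\mathcal{D}_1$, $\mathcal{D}_4$, $\mathcal{HVS}$, $\bar{\mathcal{D}}_2$ and $\mathcal{D}_3$ never gets off the ground.

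The rest of your plan is essentially sound and close to the paper where the rank bound comes for free. When $[X_\lambda X]=0$ (or acts trivially, as in $\mathcal{A}_2$, $\mathcal{A}_3$ where $B$ kills every FNICM), the identity $0=[X_\lambda X]_{2\lambda}v=2X_\lambda(X_\lambda v)$ plus torsion-freeness (Lemma~\ref{lem:free.finite.rank.mod}) forces $X_\lambda M=0$, so $M$ is a genuinely irreducible $R_\ep$-module and Proposition~\ref{rank2.lie.rank1.mod} applies legitimately; your ``sieve'' then reproduces the rank-one rows of the table. For $\mathcal{B}_2$ and $\mathcal{C}_2$ the paper shortcuts your polynomial computation by observing that these algebras are (contain) a direct sum of $\mathcal{NS}$ with a complementary summand and quoting the semisimple-module result of Cheng--Kac together with Proposition~\ref{ns.mod}; your direct approach would also need the rank bound first. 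You should also note that for $\mathcal{HVS}$ the paper does not re-derive the rank $(1{+}1)$ modules by hand but cites the classification of Proposition~\ref{HVS.mod} and then isolates the irreducible one, $M_{\Delta,\zeta,c,\epsilon}$.
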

	
	\subsection{FNICMs of type $\mathcal{A}$}
	\hspace{1.5em}Note that $\mathcal{A}_i(i=1,2,3,4)$ are solvable (see Proposition~\ref{prop:sol}). Let $M$ be the FNICM over $\mathcal{A}_i$. For the finite-dimensional solvable Lie algebra, we have the famous Lie's theorem. The analog of Lie's theorem in the conformal non-super case has been shown in Proposition~\ref{lem:con.Lie.thm}, and the non-conformal super case is as follows.
	
	\begin{proposition}\label{lem:lie.sup.Lie.thm}\textup{(\cite{K77})}
		Let $\mathfrak{g}=\mathfrak{g}_\ep\oplus\mathfrak{g}_\op$ be a finite-dimensional solvable Lie superalgebra and $V=V_\ep\oplus V_\op$ be its finite-dimensional irreducible module. Then either $\dim V_\ep=\dim V_\op$ and $\dim V=2^s$, where $s\le \dim\mathfrak{g}_\op$ or $\dim V=1$. 
	\end{proposition}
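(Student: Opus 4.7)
My approach is to reduce to classical Lie's theorem on the even part, bound $\dim V$ by acting on a weight vector with the odd part, and then refine the bound using a Clifford-algebra structure. The main obstacle will be the last step: identifying the precise dimension $2^s$ in the presence of corrections to the naive anticommutation relations.

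First, since $\mathfrak{g}$ is solvable, its even part $\mathfrak{g}_\ep$ is a finite-dimensional solvable Lie algebra. Applying the classical Lie theorem over $\bC$, there is a nonzero common eigenvector $v_0 \in V$ for $\mathfrak{g}_\ep$ with weight $\lambda : \mathfrak{g}_\ep \to \bC$; because the $\mathfrak{g}_\ep$-action preserves the $\bZz$-grading, I may take $v_0$ homogeneous, say $v_0 \in V_\ep$, and the weight $\lambda$ automatically vanishes on $[\mathfrak{g}_\ep, \mathfrak{g}_\ep]$ by the standard scalar-commutator argument.

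Second, fixing a basis $x_1, \dots, x_n$ of $\mathfrak{g}_\op$, consider the cyclic subspace $W = U(\mathfrak{g}) v_0$. By the PBW theorem for Lie superalgebras together with $h v_0 \in \bC v_0$ for every $h \in \mathfrak{g}_\ep$, the space $W$ is spanned by ordered monomials $x_{i_1} x_{i_2} \cdots x_{i_k} v_0$ with $i_1 < i_2 < \cdots < i_k$; such ordering is permissible because $x_i x_j + x_j x_i = [x_i, x_j] \in \mathfrak{g}_\ep$, and the $\mathfrak{g}_\ep$-action can be pushed back through odd factors (picking up correction terms in $[\mathfrak{g}_\ep, \mathfrak{g}_\op] \subseteq \mathfrak{g}_\op$) until it reaches $v_0$, where it acts by scalar. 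A straightforward induction verifies $\mathfrak{g}$-invariance of $W$, so irreducibility of $V$ forces $W = V$, giving the crude bound $\dim V \le 2^n$; moreover the parity of each monomial alternates with the number of odd factors, suggesting the balance $\dim V_\ep = \dim V_\op$.

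Third, to sharpen $\dim V \le 2^n$ to $\dim V = 2^s$ with $s \le n$, I would introduce the symmetric bilinear form $B(x, y) = \lambda([x, y])$ on $\mathfrak{g}_\op$, which is well-defined since $[x, y] \in \mathfrak{g}_\ep$. Choosing a basis of $\mathfrak{g}_\op$ adapted to $\ker B$ and its complement, the operators $X_i$ induced by $x_i$ satisfy Clifford-type anticommutation $X_i X_j + X_j X_i = B(x_i, x_j) \cdot \mathrm{id}$ modulo corrections coming from the non-scalar part of the $\mathfrak{g}_\ep$-action on higher monomials. A filtered argument identifies $V$ as an irreducible module for a suitable quotient of $\mathrm{Cliff}(\mathfrak{g}_\op, B)$ whose unique irreducible complex representation has dimension $2^{\lceil r/2 \rceil}$ (where $r = \mathrm{rank}\, B$), while the contribution of $\ker B$ is absorbed into the counting; this yields $\dim V = 2^s$ for some $s \le n$. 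The equality $\dim V_\ep = \dim V_\op$ then follows because any $X_i$ with nontrivial action interchanges the two parities bijectively, and in the degenerate subcase where every $X_i$ annihilates $v_0$ the module collapses to $\dim V = 1$.

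The hardest step will be the Clifford identification in the third paragraph, since the relations hold exactly on $v_0$ but acquire corrections on higher monomials because $[\mathfrak{g}_\ep, \mathfrak{g}_\op] \subseteq \mathfrak{g}_\op$ is generally nonzero. If the direct approach becomes unwieldy, I would fall back on an induction on $\dim \mathfrak{g}_\op$: locate a codimension-one $\mathfrak{g}_\ep$-stable subspace of $\mathfrak{g}_\op$ generating a Lie subsuperalgebra $\mathfrak{g}'$, apply the inductive hypothesis to an irreducible constituent of $V|_{\mathfrak{g}'}$, and show that extending by one remaining odd generator either doubles the module dimension (producing the factor of $2$) or leaves it unchanged (absorbed into the radical of $B$), which recovers the same conclusion $\dim V = 2^s$.
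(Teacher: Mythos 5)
The paper does not prove this proposition; it is quoted directly from Kac \cite{K77}, so your proposal has to stand on its own. Your first two steps are sound and are indeed the opening moves of the standard argument: Lie's theorem on $\mathfrak{g}_\ep$ produces a homogeneous common eigenvector $v_0$ of weight $\lambda$, and the super PBW theorem gives $V=U(\mathfrak{g})v_0=\Lambda(\mathfrak{g}_\op)v_0$, hence $\dim V\le 2^n$ with $n=\dim\mathfrak{g}_\op$. The symmetric form $B(x,y)=\lambda([x,y])$ on $\mathfrak{g}_\op$ is also the right object to introduce.

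The gap is in your third step, and it is genuine. The operators $X_i$ satisfy $X_iX_j+X_jX_i=\rho([x_i,x_j])$, where $\rho([x_i,x_j])$ is the action of an even element of $\mathfrak{g}$; this operator equals the scalar $B(x_i,x_j)$ only on the line $\bC v_0$, not on all of $V$. Consequently $V$ is not a module over (a quotient of) $\mathrm{Cliff}(\mathfrak{g}_\op,B)$, and the ``filtered argument'' you invoke can at best control an associated graded object, which yields upper bounds on $\dim V$ but cannot produce the exact equalities $\dim V=2^s$ and $\dim V_\ep=\dim V_\op$. Your justification of the parity balance --- that any $X_i$ acting nontrivially ``interchanges the two parities bijectively'' --- is likewise unproved: an odd operator maps $V_\ep$ to $V_\op$ and back, but it may act nilpotently. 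What actually closes the argument (and is the content of Kac's proof) is a polarization: using the $\ad\mathfrak{g}_\ep$-invariance of $B$, which follows from $\lambda([\mathfrak{g}_\ep,\mathfrak{g}_\ep])=0$, one produces a $\mathfrak{g}_\ep$-stable isotropic subspace $\mathfrak{m}\subseteq\mathfrak{g}_\op$, maximal with these properties, sets $\mathfrak{p}=\mathfrak{g}_\ep\oplus\mathfrak{m}$, extends $\lambda$ to a one-dimensional $\mathfrak{p}$-module with $\mathfrak{m}$ acting by zero, and shows $V\cong\mathrm{Ind}_{\mathfrak{p}}^{\mathfrak{g}}\,\bC_\lambda$. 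The induced module is free over the exterior algebra of a complement of $\mathfrak{m}$ in $\mathfrak{g}_\op$, which is what forces $\dim V=2^s$ with $s=\dim\mathfrak{g}_\op-\dim\mathfrak{m}$ and the equality of the two parities when $s\ge1$. Your fallback induction on $\dim\mathfrak{g}_\op$ points in this direction, but the dichotomy ``adjoining one odd generator either doubles the dimension or leaves it unchanged'' is precisely the assertion that requires proof, not a step you may assume.
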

	
	The following lemma contains two particularly neat results of Proposition~\ref{lem:lie.sup.Lie.thm}.
	\begin{lemma}\label{lem:lie.sup.thm.app}\textup{(\cite{K77})} \subno{1} All the finite-dimensional irreducible modules of $\mathfrak{g}$ is one-dimensional if and only if $[\mathfrak{g}_\op,\mathfrak{g}_\op]\subseteq[\mathfrak{g}_\ep,\mathfrak{g}_\ep]$. 
		
		\subno{2} Let $\mathfrak{g}=\mathfrak{g}_\ep\oplus\mathfrak{g}_\op$ be a finite-dimensional nilpotent Lie superalgebra and $\mathfrak{g}^*$ its dual space. Then there is a bijective correspondence between the set of classes of isomorphic finite-dimensional irreducible $\mathfrak{g}$-modules and $\mathfrak{F}=\{ \ell\in\mathfrak{g}^*\ |\ \ell([\mathfrak{g},\mathfrak{g}])=\ell(\mathfrak{g}_\op)=0\}$.
	\end{lemma}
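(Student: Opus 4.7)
The plan is to derive both parts from Proposition~\ref{lem:lie.sup.Lie.thm} together with the standard Clifford/Kirillov-type construction of irreducible modules over solvable Lie superalgebras. The underlying mechanism in both cases is the same: a one-dimensional module $\bC v$ must have its odd part $\mathfrak{g}_\op$ acting as zero (parity forces $y\cdot v$ to lie in the opposite parity subspace, which is zero), so such a module is determined by a linear form $\ell\in\mathfrak{g}^*$ satisfying $\ell([\mathfrak{g}_\ep,\mathfrak{g}_\ep])=0$ and $\ell(\mathfrak{g}_\op)=0$. Conversely, every $\ell$ of this form defines a one-dimensional $\mathfrak{g}$-module.

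For part (1), the ``$\Rightarrow$'' direction is direct: if all finite-dimensional irreducible modules are one-dimensional, pick any $\ell\in\mathfrak{g}_\ep^*$ vanishing on $[\mathfrak{g}_\ep,\mathfrak{g}_\ep]$ and consider the (associated) irreducible module determined by $\ell$ via Proposition~\ref{lem:lie.sup.Lie.thm}. Its dimension is $2^s$, where $2s$ equals the rank of the symmetric bilinear form $B_\ell(x,y):=\ell([x,y])$ on $\mathfrak{g}_\op$. The hypothesis forces $s=0$, hence $B_\ell\equiv 0$ for every such $\ell$, which by standard duality yields $[\mathfrak{g}_\op,\mathfrak{g}_\op]\subseteq[\mathfrak{g}_\ep,\mathfrak{g}_\ep]$. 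Conversely, for the ``$\Leftarrow$'' direction, any finite-dimensional irreducible $V$ contains a common eigenvector for $\mathfrak{g}_\ep$ with weight $\ell$ (here we implicitly use the solvability built into the context of Proposition~\ref{lem:lie.sup.Lie.thm}, which makes $\ell([\mathfrak{g}_\ep,\mathfrak{g}_\ep])=0$). Under the hypothesis, $\ell$ also vanishes on $[\mathfrak{g}_\op,\mathfrak{g}_\op]$, so $B_\ell\equiv 0$. The resulting Clifford module is one-dimensional, hence $V$ itself is one-dimensional by the super Lie theorem.

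For part (2), one adapts Kirillov's orbit method to the nilpotent super setting. Given $\ell\in\mathfrak{F}$, define the one-dimensional module $\bC_\ell$ on which $\mathfrak{g}_\ep$ acts via $\ell$ and $\mathfrak{g}_\op$ acts by zero; the conditions $\ell([\mathfrak{g},\mathfrak{g}])=0$ and $\ell(\mathfrak{g}_\op)=0$ ensure this defines a genuine $\mathfrak{g}$-module, and distinct $\ell$ give non-isomorphic modules. Conversely, given a finite-dimensional irreducible $\mathfrak{g}$-module $V$, nilpotency of $\mathfrak{g}$ combined with the super Lie theorem produces a common $\mathfrak{g}_\ep$-eigenvector and an associated form $\ell$; the nilpotent structure guarantees that the radical-type arguments collapse $B_\ell$ and force $V\cong\bC_\ell$, giving a well-defined inverse map.

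The main technical obstacle is justifying the nontrivial half of the ``$\Leftarrow$'' step in (1): one must verify that the Clifford-algebra construction associated to $B_\ell$ really produces the full irreducible $\mathfrak{g}$-module rather than a proper submodule, and that the obtained dimension is exactly $2^s$ with $s=\tfrac12\,\mathrm{rank}\,B_\ell$. Similarly, in (2), showing that no two distinct $\ell\in\mathfrak{F}$ give isomorphic modules and that every irreducible arises this way requires a careful induction on $\dim\mathfrak{g}$ using the ascending central series of the nilpotent superalgebra. Both of these are the classical parts of Kac's construction in \cite{K77}, so the proof proposal reduces the lemma to invoking those structural results after setting up the eigenvector/bilinear-form framework above.
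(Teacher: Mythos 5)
A preliminary observation: the paper offers no proof of this lemma at all --- both halves are quoted from Kac \cite{K77}, so the ``paper's proof'' is the citation itself. Your write-up, by your own closing admission, also routes the two technical cores (the construction of an irreducible module attached to a form $B_\ell$ via Clifford algebras/polarizations, and the classification over nilpotent superalgebras) back to \cite{K77}; to that extent it is an elaborated citation rather than a proof, which is consistent with what the paper does. Within your scaffolding, though, note that your ``$\Rightarrow$'' direction in part (1) is circular relative to the tools you name: Proposition~\ref{lem:lie.sup.Lie.thm} only constrains the dimension of a \emph{given} irreducible module; it does not produce, for each $\ell$ with $\ell([\mathfrak{g}_\ep,\mathfrak{g}_\ep])=0$, an irreducible module of dimension $2^s$ with $2s=\mathrm{rank}\,B_\ell$. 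That existence statement is precisely Kac's construction, i.e., the substance of what is being ``proved''.

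The genuine gap is in your part (2). The elementary half of your argument is correct: $\ell\mapsto\bC_\ell$ is an injection of $\mathfrak{F}$ onto the isomorphism classes of one-dimensional modules (parity forces $\mathfrak{g}_\op$ to kill any one-dimensional module). The entire content of (2) is therefore surjectivity, i.e., that every finite-dimensional irreducible module of a nilpotent $\mathfrak{g}$ is one-dimensional, and your justification --- ``the nilpotent structure guarantees that the radical-type arguments collapse $B_\ell$ and force $V\cong\bC_\ell$'' --- is false. Take $\mathfrak{g}=\bC z\oplus\bC x$ with $z$ even and central, $x$ odd, and $[x,x]=z$; this superalgebra is nilpotent (indeed $[\mathfrak{g},[\mathfrak{g},\mathfrak{g}]]=0$) and has $\mathfrak{F}=\{0\}$. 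Yet for every $c\in\bCx$ the superspace $V_c=\bC v\oplus\bC (x\cdot v)$, on which $z$ acts by $c$ and hence $x^2$ acts by $c/2$, is an irreducible module of dimension $(1|1)$ (a graded submodule containing either basis vector contains both, since $x\cdot(x\cdot v)=\tfrac{c}{2}v\neq0$), and the $V_c$ are pairwise non-isomorphic by their central characters. So nilpotency does not collapse $B_\ell$ (here $B_\ell(x,x)=c\neq 0$), and no bijection with $\mathfrak{F}$ can exist for this $\mathfrak{g}$. In fact, combining part (1) with the elementary bijection above shows that the conclusion of part (2) is \emph{equivalent} to the inclusion $[\mathfrak{g}_\op,\mathfrak{g}_\op]\subseteq[\mathfrak{g}_\ep,\mathfrak{g}_\ep]$, which is an additional hypothesis and not a consequence of nilpotency. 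Consequently your proof of (2) cannot be completed as sketched, and the statement as transcribed should only be invoked in situations where that inclusion holds (for instance in Lemma~\ref{lem:D(1).mod.rank1}, where the relevant odd generators satisfy $[X_m,X_n]=0$); a careful blind proof attempt should have surfaced this counterexample rather than asserted the collapse of $B_\ell$.
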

	
	In \cite{FKR}, the analog of Lie's theorem for finite solvable Lie conformal superalgebras was discussed. However, the relevant conclusions are difficult to be applied to the classification of the finite irreducible conformal modules. Even their ranks are not easy to be determined, which is very different from the Lie superalgebra. 
	
	Fortunately, the Lie conformal superalgebras $\mathcal{A}_i(i=1,2,3,4)$ satisfy $R_\op'\subseteq R_\ep'\subseteq\cp B$ and we have $B_\lambda v=0$ by Proposition~\ref{rank2.lie.con.alg.mod}(1), for $v\in M$. Because $0=[X_\lambda X]_{2\lambda}v=2X_\lambda(X_\lambda v)$ and $M$ is torsion-free by Lemma~\ref{lem:free.finite.rank.mod}, we can deduce that $X_\lambda v=0$ and $M$ can be considered as a conformal module over the even part of $\mathcal{A}_i$. Therefore, this together with Proposition~\ref{rank2.lie.con.alg.mod}(1), we can reach the conclusion of the solvable cases in Theorem~\ref{thm:FNICM}. 
	
	\subsection{FNICMs of type $\mathcal{B},\mathcal{C}$}
	\hspace{1.5em}This subsection is arranged to discuss the FNICMs over the Lie conformal superalgebras of type $\mathcal{B},\mathcal{C}$. Let $R^1$ be the Lie conformal superalgebra $R^1=\cp A\oplus\cp X$, where $R^1_\ep=\cp A$ and $R^1_\op=\cp X$, which satisfies
	\begin{align*}
		[A_\lambda A]=(\partial+2\lambda)A,\ \ [A_\lambda X]=(\partial+\alpha\lambda+\beta)X,\ \ [X_\lambda X]=0,
	\end{align*}
	for any $\alpha,\beta\in\bC$. It is the \textit{super deformation of Heisenberg--Virasoro type Lie conformal algebra}, which was first introduced in \cite{WWX}. Also in \cite{WWX}, its finite irreducible conformal modules were classified completely.
	\begin{proposition}\label{prop:sHV.mod}\textup{(\cite{WWX})}
		Any nontrivial conformal module over $R^1$ is of rank one and isomorphic to the following\textup{:}
		\begin{align*}
			V_{\Delta,\eta}=\cp v,\ \ A_\lambda v=(\partial+\Delta\lambda+\eta)v,\ \ X_\lambda v=0,
		\end{align*}
		for some $\Delta,\eta\in\bC$. The module $V_{\Delta,\eta}$ is irreducible if and only if $\Delta\neq 0$.
	\end{proposition}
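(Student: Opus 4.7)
The plan is to reduce the classification to that of $\mathrm{Vir}$-modules by proving that the odd generator $X$ must act trivially on any finite nontrivial irreducible conformal $R^1$-module $M$; then Proposition~\ref{vir.mod} (applied to $R^1_\ep=\mathrm{Vir}$) finishes the job.

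First I would invoke Lemma~\ref{lem:free.finite.rank.mod} so that $M$ is torsion-free and free of finite rank over $\cp$, decomposing along the $\bZz$-grading as $M=M_\ep\oplus M_\op$. The central object is
\begin{equation*}
N \;:=\; \{v\in M : X_\lambda v=0 \text{ for all } \lambda\in\bC\}.
\end{equation*}
I would verify that $N$ is an $R^1$-submodule. Closure under $\partial$ follows from $X_\lambda(\partial v)=(\partial+\lambda)X_\lambda v$, and closure under the $X$-action is tautological. For closure under $A$, skew-symmetry gives $[X_\lambda A]=((\alpha-1)\partial+\alpha\lambda-\beta)X$, so the Jacobi identity yields
\begin{equation*}
X_\lambda(A_\mu v) \;=\; [X_\lambda A]_{\lambda+\mu}v \;+\; A_\mu(X_\lambda v),
\end{equation*}
and both terms on the right vanish for $v\in N$ (the first by conformal sesquilinearity reducing it to a multiple of $X_{\lambda+\mu}v=0$). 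By the irreducibility of $M$, either $N=M$ or $N=0$. If $N=M$, then $X$ acts trivially, $M$ becomes a conformal $\mathrm{Vir}$-module, and any $\mathrm{Vir}$-submodule is automatically an $R^1$-submodule; so $M$ is irreducible as a $\mathrm{Vir}$-module, and Proposition~\ref{vir.mod} yields $M\cong V_{\Delta,\eta}$ with $\Delta\neq 0$. Conversely, setting $X_\lambda v=0$ on any such $V_{\Delta,\eta}$ produces a valid conformal $R^1$-module, irreducible iff $\Delta\neq 0$.

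The main obstacle is ruling out $N=0$. Since $X$ reverses parity, $X_\lambda M\subseteq M_\ep[\lambda]\oplus M_\op[\lambda]$ lands in the ``opposite'' piece, so if either $M_\ep$ or $M_\op$ vanished then $X$ would act trivially and contradict $N=0$; hence both summands must be nonzero. From $|X||X|=1$ and $[X_\lambda X]=0$, the Jacobi identity produces the anticommutation $X_\lambda X_\mu v+X_\mu X_\lambda v=0$, so each $X_\lambda$ is a square-zero odd operator. Picking a vector $v\in M_\ep$ which, by Proposition~\ref{vir.mod} applied to a rank-one $\mathrm{Vir}$-subquotient of $M_\ep$, satisfies $A_\mu v=(\partial+\Delta\mu+\eta)v$, I would combine the Jacobi-derived relation
\begin{equation*}
A_\mu(X_\lambda v) \;=\; ((\alpha-1)\mu-\lambda+\beta)X_{\mu+\lambda}v \;+\; (\partial+\lambda+\Delta\mu+\eta)X_\lambda v
\end{equation*}
with the anticommutation identity and a degree comparison in $\lambda$ and $\mu$ to force $X_\lambda v=0$, thereby producing a nonzero element of $N$ and the desired contradiction. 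Executing this degree/weight analysis cleanly---ultimately showing that the only way the two recursions above can coexist with $X_\lambda^2=0$ and finite rank is the trivial solution---is the technical heart of the argument.
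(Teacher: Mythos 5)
Your reduction to the dichotomy ``$N=M$ or $N=0$'' is sound, and the branch $N=M$ is handled correctly, but the plan for ruling out $N=0$ has a genuine gap, and the specific mechanism you propose cannot close it. The two relations you intend to exploit --- the anticommutation $X_\lambda(X_\mu v)+X_\mu(X_\lambda v)=0$ and the compatibility $A_\mu(X_\lambda v)=((\alpha-1)\mu-\lambda+\beta)X_{\mu+\lambda}v+(\partial+\lambda+\Delta\mu+\eta)X_\lambda v$ --- are \emph{not} enough to force $X_\lambda v=0$ by any degree comparison, because they admit nonzero finite-rank solutions. Concretely, take $\alpha=1$, $\beta=0$ and $M=\cp v_\ep\oplus\cp v_\op$ with $A_\mu v_\ep=(\partial+\Delta\mu+\eta)v_\ep$, $A_\mu v_\op=(\partial+\Delta\mu+\eta)v_\op$, $X_\lambda v_\ep=v_\op$, $X_\lambda v_\op=0$: both identities hold, $X_\lambda$ is square-zero, the module is free of rank $(1+1)$, yet $X_\lambda v_\ep\neq 0$. (These are exactly the modules $M^{(i)}_{\Delta_0,\Delta_1,\eta}$ of Proposition~\ref{HVS.mod} with $B$ forgotten; they are reducible, with $N=\cp v_\op$ proper and nonzero.) So a vector $v$ with $A_\mu v=(\partial+\Delta\mu+\eta)v$ need not lie in $N$, and irreducibility must enter the argument in a stronger way than merely ``$N=0$ or $N=M$''. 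A secondary imprecision: a rank-one $\Vir$-\emph{subquotient} of $M_\ep$ does not by itself produce a vector of $M$ on which $A$ acts diagonally; you need an actual submodule (or the Cheng--Kac machinery) for that.

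The paper itself does not prove this proposition; it is quoted from \cite{WWX}, and the argument that actually works is the one the paper runs for type $\mathcal{D}$ in Section~5.4. Pass to the extended annihilation superalgebra $\Lie(R^1)^e$, use Lemma~\ref{CK.lemma} to obtain a nonzero finite-dimensional subspace $M_N$ annihilated by all sufficiently deep modes, and observe that the relevant quotient is a finite-dimensional solvable Lie superalgebra with $[\mathfrak{g}_\op,\mathfrak{g}_\op]=0\subseteq[\mathfrak{g}_\ep,\mathfrak{g}_\ep]$; Lemma~\ref{lem:lie.sup.thm.app}(1) then yields a one-dimensional submodule $\bC v\subseteq M_N$, and since each $X_{(i)}$ reverses parity it must kill $v$. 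That $v$ is the nonzero element of $N$ you were missing; from there your $N=M$ branch and Proposition~\ref{vir.mod} finish the proof exactly as you describe.
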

	
	For $\mathcal{B}_1$, it is a direct sum of $R^1$ and the Virasoro Lie conformal algebra. We give the following lemma.
	\begin{lemma}\label{quasi.s.s}
		Set $R$ as a direct sum of $R^1$ and $\cp B$, that is $[A_\lambda B]=[X_\lambda B]=0$. Let $M$ be a finite conformal $R$-module. Suppose $A$ acts nontrivially on $M$. Then there exists $v\in M$ such that $A_\lambda v=(\partial+\Delta\lambda+\eta)v$ for some $\Delta, \eta\in\bC$ and $B_\lambda v=0$. 
	\end{lemma}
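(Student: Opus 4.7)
The plan is to locate a vector $v\in M$ that is a Virasoro highest-weight vector for $A$ (so $A_\lambda v=(\partial+\Delta\lambda+\eta)v$ for some $\Delta,\eta\in\bC$) and is simultaneously annihilated by the full $B$-action. The key leverage is the commutation $[A_\mu,B_\lambda]=0$ on $M$, coming from the direct-sum structure $R=R^1\oplus\cp B$, which rigidly constrains $B_\lambda$ on $A$-weight vectors.

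Since $A$ acts nontrivially on $M$, a Cheng--Kac style analysis of $M$ restricted to the Virasoro subalgebra $\cp A$ (or Proposition~\ref{vir.mod} applied to a suitable rank-one Vir-subquotient) produces $v_0\in M$ with $A_\lambda v_0=(\partial+\Delta\lambda+\eta)v_0$. Writing $B_\lambda v_0=\sum_{k\geq 0}\lambda^{(k)}u_k$ with $u_k\in M$, the identity $[A_\mu,B_\lambda]v_0=0$ together with comparison of coefficients of $\lambda^{(k)}$ yields the recursion $A_\mu u_0=(\partial+\Delta\mu+\eta)u_0$ and $A_\mu u_k=(\partial+\Delta\mu+\eta)u_k+ku_{k-1}$ for $k\geq 1$, so every $u_k$ lies in the $(\Delta,\eta)$-weight space of $A$. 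If moreover the $u_k$ all lie in $\cp v_0$, writing $u_k=p_k(\partial)v_0$ converts the recursion into the polynomial identity $(\partial+\Delta\mu+\eta)[p_k(\partial+\mu)-p_k(\partial)]=p_{k-1}(\partial)$; since the left side is divisible by $\mu$ in $\bC[\partial,\mu]$ while the right side is $\mu$-independent, one forces $p_{k-1}=0$ and $p_k$ constant, iteratively yielding $B_\lambda v_0=0$.

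The main obstacle in the general case is that the $u_k$ may escape $\cp v_0$. To handle this, I would use Cheng--Kac's joint weight decomposition of $M$ under the commuting semisimple operators $A_{(1)}$ and $B_{(1)}$ (they commute because $[A_\mu B]=0$), then isolate a common eigenvector $v$ lying in a nonzero $A_{(1)}$-eigenspace and with minimal $B_{(1)}$-weight; minimality (combined with the Witt-type relation $[B_{(1)},B_{(n)}]=(1-n)B_{(n)}$) gives $B_{(n)}v=0$ for $n\geq 2$, reducing $B_\lambda v$ to $B_{(0)}v+\Delta'\lambda v$. Passing if necessary to $\ker B_{(0)}$ within this weight space (nonempty by a standard finite-dimensional kernel argument together with the Witt relations among the $B_{(n)}$), one obtains $B_\lambda v=\Delta'\lambda v$, and the direct computation $[A_\mu,B_\lambda]v=-\Delta'\lambda^2 v=0$ forces $\Delta'=0$, yielding the desired vector $v$ with $A_\lambda v=(\partial+\Delta\lambda+\eta)v$ and $B_\lambda v=0$.
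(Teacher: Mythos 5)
Your second paragraph already contains the paper's entire proof, and you stop one line short of finishing it. From $[A_\mu,B_\lambda]v_0=0$ you correctly derive
\begin{equation*}
\sum_{k}\lambda^{(k)}A_\mu u_k \;=\; (\partial+\lambda+\Delta\mu+\eta)\sum_{k}\lambda^{(k)}u_k ,
\end{equation*}
i.e.\ $A_\mu u_k=(\partial+\Delta\mu+\eta)u_k+ku_{k-1}$. But $B_\lambda v_0$ is a \emph{polynomial} in $\lambda$: if $N$ is the largest index with $u_N\neq 0$, the right-hand side has $\lambda$-degree $N+1$ with top coefficient a nonzero multiple of $u_N$, while the left-hand side has $\lambda$-degree at most $N$ (equivalently, your own recursion at $k=N+1$ reads $0=(N+1)u_N$). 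Hence $B_\lambda v_0=0$ outright, with no hypothesis that the $u_k$ lie in $\cp v_0$. The ``main obstacle'' you identify is illusory, and this top-coefficient comparison is precisely the argument the paper gives (there phrased with $A_{(0)}$ in place of $A_\mu$).

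Because you do not draw that conclusion, your proof as written rests on the third paragraph, and that paragraph has genuine gaps. There is no justification that $A_{(1)}$ and $B_{(1)}$ act semisimply on a finite conformal module (they need not), and a common eigenvector of $A_{(1)}$ is in any case weaker than the required relation $A_\lambda v=(\partial+\Delta\lambda+\eta)v$. The Witt-type relation $[B_{(1)},B_{(n)}]=(1-n)B_{(n)}$ presupposes $[B_\lambda B]=(\partial+2\lambda)B$, which is not among the hypotheses of the lemma --- the lemma is applied both to $\mathcal{B}_1$ (where $\cp B$ is Virasoro) and to $\mathcal{C}_1$ (where $[B_\lambda B]=0$), so you cannot assume it. Finally, the nonemptiness of $\ker B_{(0)}$ on the chosen weight space is asserted, not proved; $B_{(0)}$ is a $\cp$-linear endomorphism and there is no a priori reason for its kernel to be nonzero. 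None of this is needed: delete the third paragraph, drop the restriction $u_k\in\cp v_0$ in the second, and compare top coefficients in $\lambda$.
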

	\begin{proof}
		Considering that $M$ is an $R^1$-module, by Proposition~\ref{prop:sHV.mod}, there exists $v\in M$ such that $A_\lambda v=(\partial+\Delta\lambda+\eta)v$ for some $\Delta,\eta\in\bC$. Suppose that $B_\lambda v=\sum_{i\in\bZ_+}\lambda^iv_i$, where $v_i\in V$. By $[A_\lambda B]=0$, \eqref{jth prdouct} and \eqref{jth.action}, we get
		\begin{align*}
			A_{(0)}(B_\lambda v)=\sum_{i\in\bZ_+}\lambda^i(A_{(0)}v_i)=B_\lambda(A_{(0)}v)=(\partial+\lambda+\eta)\sum_{i\in\bZ_+}\lambda^iv_i.
		\end{align*}
		Comparing the coefficients of $\lambda$, we have $B_\lambda v=0$. 
	\end{proof}
	\begin{remark}\label{rem:quasi.s.s}
		Set $[A_\lambda B]=0$. By the proof of Lemma~\ref{quasi.s.s}, we can say that for any $v$ in the finite conformal module $M$, if $A_\lambda v=(\partial+\Delta\lambda+\eta)v$, then we have $B_\lambda v=0$. Conversely, if $B_\lambda v\neq 0$, then $A$ acts trivially.
	\end{remark}

	Let $M$ be the finite nontrivial irreducible conformal module over $\mathcal{B}_1$, because $0=[X_\lambda X]_{2\lambda}v=2X_\lambda(X_\lambda v)$ for any $v\in M$ and $M$ has no nonzero torsion elements by Lemma~\ref{lem:free.finite.rank.mod}, we can deduce that $X_\lambda v=0$. So $M$ can be considered as a conformal module over the even part of $\mathcal{B}_1$. By Proposition~\ref{rank2.lie.rank1.mod}, the FNICMs over $\mathcal{B}_1$ are all of rank one. Hence, the FNICM over $\mathcal{B}_1$ is of the following form and its irreducibility is given by the $\Vir$-module:
	\begin{lemma}\label{B1.mod.1}
		$M_{A,\Delta,\eta}=\cp v$ for some $\Delta\in\bCx$, $\eta\in\bC$, with $\lambda$-actions\textup{:}
		\begin{align*}
			A_\lambda v=(\partial+\Delta\lambda+\eta)v,\ \ B_\lambda v=X_\lambda v=0.
		\end{align*}
	\end{lemma}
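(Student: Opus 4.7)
The plan is to follow the two-step reduction outlined in the paragraph immediately preceding the lemma: first show that the odd generator $X$ must act trivially on any FNICM $M$ over $\mathcal{B}_1$, and then invoke the classification of finite irreducible conformal modules over the even part $R_\ep=\cp A\oplus\cp B$, which is the direct sum of two Virasoro Lie conformal algebras.

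For the first step, I would apply the conformal Jacobi identity \eqref{conmod.2} to the relation $[X_\lambda X]=0$. Since $|X|=\op$, the sign in \eqref{conmod.2} becomes $+$, so $X_\lambda(X_\mu v)+X_\mu(X_\lambda v)=0$ for all $v\in M$; specializing $\mu=\lambda$ yields $2X_\lambda(X_\lambda v)=0$. Combining this with the fact that $M$ is free of finite rank (hence torsion-free) over $\cp$ via Lemma~\ref{lem:free.finite.rank.mod}, I would write $X_\lambda v=\sum_{i\geq 0}\lambda^i v_i$ in $M[\lambda]$ and run a leading-degree analysis in $\lambda$ on $X_\lambda(X_\lambda v)$ to force each $v_i$ to vanish, concluding $X_\lambda v=0$.

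With $X$ acting trivially, $M$ becomes an irreducible conformal module over $R_\ep$. Proposition~\ref{rank2.lie.rank1.mod} forces $M=\cp v$ to be free of rank one, and Proposition~\ref{rank2.lie.con.alg.mod}(2) lists the only two possible $R_\ep$-actions in Type B: either $A$ acts by \eqref{H.B.mod} and $B$ trivially, or vice versa via \eqref{H.B.mod'}. The former is precisely $M_{A,\Delta,\eta}$ with $\Delta:=\alpha_1$ and $\eta:=\beta_1$; consistency of the remaining $\mathcal{B}_1$-bracket $[A_\lambda X]=(\partial+\alpha\lambda+\beta)X$ with $X_\lambda v=0$ is immediate from the Jacobi identity. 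Irreducibility is then inherited from the Virasoro case: by Proposition~\ref{vir.mod}, $V_{\Delta,\eta}$ is irreducible if and only if $\Delta\neq 0$, which matches the stated condition $\Delta\in\bCx$.

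The main obstacle is the triviality-of-$X$ argument: passing from $X_\lambda(X_\lambda v)=0$ to $X_\lambda v=0$ is not formal, because the skew-symmetry of $X_\lambda(X_\mu v)$ in $(\lambda,\mu)$ already makes the diagonal vanish automatically. The actual argument must exploit the full polynomial identity $X_\lambda(X_\mu v)=-X_\mu(X_\lambda v)$ together with the free-of-finite-rank $\cp$-module structure from Lemma~\ref{lem:free.finite.rank.mod}; the remaining steps are then routine applications of the cited rank-two classification results.
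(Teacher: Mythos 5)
Your strategy coincides with the paper's: first force $X$ to act trivially using $[X_\lambda X]=0$, then regard $M$ as an irreducible conformal module over the even part $\Vir\oplus\Vir$ and invoke Proposition~\ref{rank2.lie.rank1.mod}, Proposition~\ref{rank2.lie.con.alg.mod}(2) and the irreducibility criterion of Proposition~\ref{vir.mod}. The second half of your argument is exactly what the paper does and is unproblematic.

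The gap is the step you yourself flag, and your diagnosis of it is correct. The paper disposes of it in one line (``because $0=[X_\lambda X]_{2\lambda}v=2X_\lambda(X_\lambda v)$ and $M$ has no nonzero torsion elements, we can deduce $X_\lambda v=0$''), but the diagonal identity $X_\lambda(X_\lambda v)=0$ is an automatic consequence of the antisymmetry $X_\lambda(X_\mu v)=-X_\mu(X_\lambda v)$, i.e.\ of the anticommutation relations $X_{(m)}X_{(n)}=-X_{(n)}X_{(m)}$, and a family of anticommuting odd operators, each squaring to zero, can act nontrivially on a free $\cp$-module: the modules $M^{(i)}_{\Delta_0,\Delta_1,\eta}$ of Proposition~\ref{HVS.mod}, on which $B$ acts by zero, are explicit torsion-free instances. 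So neither torsion-freeness alone nor your proposed leading-degree analysis of $X_\lambda(X_\lambda v)$ can close the step; the missing ingredient is irreducibility. A clean repair: $\ker X=\{v\in M\mid X_\lambda v=0\}$ is a conformal submodule (this uses $[B_\lambda X]=0$, $[A_\lambda X]\in\bC[\partial,\lambda]X$ and $[X_\lambda X]=0$), hence equals $0$ or $M$, and one must rule out $\ker X=0$. In rank $(1+1)$ this is where torsion-freeness genuinely enters: writing $X_\lambda v_\ep=h(\partial,\lambda)v_\op$ and $X_\lambda v_\op=g(\partial,\lambda)v_\ep$, the identity at $\mu=\lambda$ gives $h(\partial+\lambda,\lambda)\,g(\partial,\lambda)=0$ in the integral domain $\bC[\partial,\lambda]$, so $g=0$ or $h=0$, and then $\cp v_\op$ (resp.\ $\cp v_\ep$) is a proper nonzero submodule, a contradiction. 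For modules of unknown rank one needs either an a priori rank bound or an annihilation-superalgebra argument in the spirit of Lemma~\ref{CK.lemma}, as the paper carries out for type $\mathcal{D}$. Your write-up should close the step along one of these lines rather than leave it as an acknowledged obstacle.
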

	
	Further, besides $R^1$, $\cp B$ in $\mathcal{B}_1$ also contributes to a $\Vir$-module, where $A_\lambda v=X_\lambda v=0$ can be proved similarly. Therefore, we get another FNICM over $\mathcal{B}_1$:
	\begin{lemma}
		$M_{B,\Delta,\eta}=\cp v$ for some $\Delta\in\bCx$, $\eta\in\bC$, with $\lambda$-actions\textup{:}
		\begin{align*}
			B_\lambda v=(\partial+\Delta\lambda+\eta)v,\ \ A_\lambda v=X_\lambda v=0.
		\end{align*}
	\end{lemma}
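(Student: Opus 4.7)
The plan is to execute the $A\leftrightarrow B$ mirror of Lemma~\ref{B1.mod.1}. Given any FNICM $M$ over $\mathcal{B}_1$, the paragraph preceding Lemma~\ref{B1.mod.1} already establishes that $X_\lambda v=0$ for every $v\in M$, using $[X_\lambda X]=0$ together with the super-Jacobi axiom \eqref{conmod.2} and torsion-freeness (Lemma~\ref{lem:free.finite.rank.mod}). Consequently $M$ descends to a finite nontrivial irreducible conformal module over the even part $\mathcal{H}=\cp A\oplus\cp B$, and by Proposition~\ref{rank2.lie.rank1.mod} it is free of rank one, $M=\cp v$.

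Next I would invoke Proposition~\ref{rank2.lie.con.alg.mod}(2), which supplies exactly two mutually exclusive alternatives, \eqref{H.B.mod} and \eqref{H.B.mod'}, for such a rank one $\mathcal{H}$-action. Alternative \eqref{H.B.mod} was extracted in Lemma~\ref{B1.mod.1} to produce $M_{A,\Delta,\eta}$; the current lemma simply records the module obtained from the symmetric alternative \eqref{H.B.mod'}, after relabelling $(\Delta,\eta):=(\alpha_2,\beta_2)$ and combining with the vanishing $X$-action just established.

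It remains to verify that the proposed data really defines a conformal $\mathcal{B}_1$-module and to pin down when it is irreducible. Conformality is essentially automatic: every instance of \eqref{conmod.2} involving $A$ or $X$ collapses to $0=0$ because those generators act trivially, while the surviving $[B_\lambda B]$-identity is precisely the Virasoro module axiom, valid by Proposition~\ref{vir.mod}. Irreducibility reduces in turn to that of the $\Vir$-module $V_{\Delta,\eta}$ on $\cp v$, which Proposition~\ref{vir.mod} guarantees precisely when $\Delta\in\bCx$, matching the hypothesis. I foresee no substantive obstacle, since every nontrivial input is already packaged in Propositions~\ref{vir.mod}, \ref{rank2.lie.con.alg.mod}, and \ref{rank2.lie.rank1.mod}, and the content of the lemma is effectively a bookkeeping mirror of Lemma~\ref{B1.mod.1}.
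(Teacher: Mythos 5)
Your argument is correct and follows the paper's own route: kill the $X$-action via $[X_\lambda X]=0$ and torsion-freeness, reduce to a rank-one module over the type-B even part via Proposition~\ref{rank2.lie.rank1.mod}, and read off the second alternative \eqref{H.B.mod'} of Proposition~\ref{rank2.lie.con.alg.mod}(2), with irreducibility coming from the $\Vir$-module $V_{\Delta,\eta}$. The paper phrases this as ``$\cp B$ also contributes a $\Vir$-module, proved similarly,'' which is exactly the mirror-image bookkeeping you carry out explicitly.
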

	
	Now, we discuss the case of $\mathcal{C}_1$. It is a direct sum of $R^1$ and the rank one commutative Lie conformal algebra. Let $M$ be the finite nontrivial irreducible conformal module over $\mathcal{C}_1$. Firstly, the conformal module $M_{A,\Delta,\eta}$ defined in Lemma~\ref{B1.mod.1} is a FNICM over $\mathcal{C}_1$ by Lemma~\ref{quasi.s.s}. In addition, $\cp B$ also contributes a nontrivial conformal module. For $0\neq v\in M$, by Proposition~\ref{lem:con.Lie.thm}, we can suppose that $B_\lambda v=\mathfrak{g}(\lambda)v$ for some $0\neq\mathfrak{g}(\lambda)\in\bC[\lambda]$. $X$ acts trivially by $[X_\lambda X]=0$. By Remark~\ref{rem:quasi.s.s}, we know that $A_\lambda$ acts trivially. Hence, $M=\cp v$ by the irreducibility. We get the FNICMs over $\mathcal{C}_1$:
	\begin{lemma} 
		\subno{1} $M_{A,\Delta,\eta}=\cp v$ for some $\Delta\in\bCx$, $\eta\in\bC$, with $\lambda$-actions\textup{:}
		\begin{align*}
			A_\lambda v=(\partial+\Delta\lambda+\eta)v,\ \ B_\lambda v=X_\lambda v=0.
		\end{align*}
		
		\subno{2} $N_\mathfrak{g}=\cp v$ for some nonzero polynomial $\mathfrak{g}(\lambda)\in\bC[\lambda]$, with $\lambda$-actions\textup{:}
		\begin{align*}
			B_\lambda v=\mathfrak{g}(\lambda)v,\ \ A_\lambda v=X_\lambda v=0.
		\end{align*}
	\end{lemma}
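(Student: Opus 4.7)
The plan is to cut the classification over $\mathcal{C}_1$ down to a question about its even part. First I would show that the odd generator $X$ acts as zero on any FNICM $M$. Since $[X_\lambda X]=0$, the module axiom \eqref{conmod.2} gives $0=[X_\lambda X]_{2\lambda}v=2X_\lambda(X_\lambda v)$ for every $v\in M$; combined with torsion-freeness from Lemma~\ref{lem:free.finite.rank.mod}, this forces $X_\lambda v=0$.

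With $X$ acting trivially, $M$ is nothing more than a finite nontrivial irreducible conformal module over $R_\ep$, which is the Type C rank two Lie conformal algebra of \eqref{H.C}. Proposition~\ref{rank2.lie.rank1.mod} then tells me $M$ is free of rank one, and Proposition~\ref{rank2.lie.con.alg.mod}(3) pins down exactly two possible shapes for the action of $\{A,B\}$: either $A_\lambda v=(\partial+\alpha\lambda+\beta)v$ with $B$ trivial, or $A$ is trivial and $B_\lambda v=\mathfrak{g}(\lambda)v$ for some nonzero $\mathfrak{g}(\lambda)\in\bC[\lambda]$. The former gives the family $M_{A,\Delta,\eta}$, the latter the family $N_\mathfrak{g}$.

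The remaining task is to isolate exactly when these candidates are irreducible and nontrivial. In the first case, irreducibility reduces to the $\Vir$-module criterion in Proposition~\ref{vir.mod}, which requires $\Delta\in\bCx$. In the second case, any nonzero $\cp$-submodule of $\cp v$ has the form $p(\partial)\cp v$, and the required containment $B_\lambda(p(\partial)v)=p(\partial+\lambda)\mathfrak{g}(\lambda)v\in\bC[\lambda]\otimes p(\partial)\cp v$ forces $p(\partial+\lambda)=p(\partial)$ (compare $\partial$-leading coefficients, using $\mathfrak{g}\neq0$), so $p$ is a constant and $N_\mathfrak{g}$ is automatically irreducible; nontriviality is built into $\mathfrak{g}\neq0$.

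I do not foresee a serious obstacle here. The one subtle point is the opening step, where the super $\lambda$-bracket $[X_\lambda X]=0$ together with torsion-freeness kills $X$; once that is in hand, the classification is inherited wholesale from the rank two even part, and the remainder is a direct appeal to results already recorded in the preliminaries.
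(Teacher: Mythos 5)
Your argument is correct, and it reaches the same classification, but it routes through different ingredients than the paper's own treatment of $\mathcal{C}_1$. You first kill the odd generator on all of $M$ (via $0=[X_\lambda X]_{2\lambda}v=2X_\lambda(X_\lambda v)$ together with torsion-freeness from Lemma~\ref{lem:free.finite.rank.mod}), then regard $M$ as a module over the rank-two even part of type C and quote Proposition~\ref{rank2.lie.rank1.mod} and Proposition~\ref{rank2.lie.con.alg.mod}(3); this is precisely the strategy the paper uses for type $\mathcal{A}$ and for $\mathcal{B}_1$, transplanted to $\mathcal{C}_1$. The paper instead exploits the decomposition $\mathcal{C}_1=R^1\oplus\cp B$: when $A$ acts nontrivially it invokes Proposition~\ref{prop:sHV.mod} (the classification of modules over the super deformation $R^1$ from \cite{WWX}) together with Lemma~\ref{quasi.s.s} to produce $v$ with $A_\lambda v=(\partial+\Delta\lambda+\eta)v$ and $B_\lambda v=0$, and when $B$ acts nontrivially it uses the conformal Lie theorem (Proposition~\ref{lem:con.Lie.thm}) for $\cp B$ plus Remark~\ref{rem:quasi.s.s} to force $A$ to act trivially. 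The trade-off is that the paper's route gets the rank-one statement from the $R^1$-module classification without first needing $X$ to annihilate $M$, while yours is more uniform across the types and keeps all citations internal to the rank-two even-part results. A small bonus of your write-up is the explicit irreducibility verification for $N_\mathfrak{g}$ (showing any submodule $p(\partial)\cp v$ forces $p$ constant), which the paper leaves implicit; your check that this forces $p(\partial+\lambda)=p(\partial)$, hence $p$ constant, is sound.
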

	
	The rest of type $\mathcal{B},\mathcal{C}$ can be discussed similarly. We give the conclusions in Theorem~\ref{thm:FNICM} directly. Notice that $\mathcal{B}_2$ is semisimple. In particular, it is the direct sum of the Neveu--Schwarz Lie conformal superalgebra and the Virasoro Lie conformal algebra. Their FNICMs are clear to us. From the conclusion in \cite[Proposition 3.1]{CK}, which is a very important conclusion about the representation of the semisimple Lie conformal (super)algebra, we can get the FNICMs of $\mathcal{B}_2$ more easily.

	\subsection{FNICMs of type $\mathcal{D}$}
	\hspace{1.5em}This subsection is devoted to discuss the FNICMs over  $\mathcal{D}_i(i=1,\dots,4)$. To give the classification of FNICMs over these Lie conformal superalgebras, Cheng--Kac's lemma plays a key role.
	\begin{lemma}\label{CK.lemma}\textup{(\cite{CK})}
		Let $\mathfrak{L}$ be a Lie superalgebra over $\bC$ with a distinguished element $\partial$ and a descending sequence of subspaces $\mathfrak{L}\supset\mathfrak{L}_0\supset\mathfrak{L}_1\supset\cdots$, such that $[\partial, \mathfrak{L}_n]=\mathfrak{L}_{n-1}$ for $n\ge 1$. Let $M$ be an $\mathfrak{L}$-module and let 
		\begin{equation*}
			M_n=\{ v\in M\ |\ \mathfrak{L}_nv=0\},\ n\in\bZ_+.
		\end{equation*}
		Suppose that $M_n\neq \{0\}$ for $n\gg 0$, and that the minimal $N\in\bZ_+$ for which $M_N\neq \{0\}$ is positive. Then $\cp M_N=\cp\otimes M_N$ and therefore $\cp M_N \cap M_N=M_N$. In particular, $M_N$ is finite-dimensional if $M$ is a finitely generated $\cp$-module.
	\end{lemma}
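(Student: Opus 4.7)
The plan is to establish $\cp M_N=\cp\otimes M_N$ directly, by showing that if $v_1,\dots,v_r\in M_N$ are $\bC$-linearly independent, then $\{\partial^j v_i\}_{i\ge 1,\,j\ge 0}$ remain $\bC$-linearly independent in $M$. The remaining assertions will then fall out quickly.

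Suppose, toward a contradiction, that there is a nontrivial relation $\sum_{i,j}a_{ij}\partial^j v_i=0$, and choose one with $d:=\max\{j:a_{ij}\neq 0\text{ for some }i\}$ as small as possible; since the $v_i$ are $\bC$-linearly independent, necessarily $d\ge 1$. The driving input is the commutator identity in $U(\mathfrak{L})$,
\begin{equation*}
Y\partial^k=\sum_{j=0}^k\binom{k}{j}(-1)^j\,\partial^{k-j}(\mathrm{ad}\,\partial)^j Y,
\end{equation*}
proved by a straightforward induction on $k$ starting from $Y\partial=\partial Y-[\partial,Y]$.

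Now take $Y\in\mathfrak{L}_{N+d-1}$. By the filtration hypothesis $[\partial,\mathfrak{L}_n]=\mathfrak{L}_{n-1}$, one has $(\mathrm{ad}\,\partial)^jY\in\mathfrak{L}_{N+d-1-j}$; for every $0\le j\le d-1$ this sits inside $\mathfrak{L}_N$ and hence annihilates any $v\in M_N$. Therefore $Y\partial^k v=0$ for all $k\le d-1$, while in $Y\partial^d v$ only the top term $j=d$ survives, giving $Y\partial^d v=(-1)^d((\mathrm{ad}\,\partial)^d Y)\,v$. Applying such a $Y$ to the chosen relation kills every contribution with $j<d$ and yields
\begin{equation*}
((\mathrm{ad}\,\partial)^d Y)\,\Bigl(\sum_i a_{id}\,v_i\Bigr)=0.
\end{equation*}
As $Y$ ranges over $\mathfrak{L}_{N+d-1}$, the element $(\mathrm{ad}\,\partial)^d Y$ sweeps out $\mathfrak{L}_{N-1}$ (by iterating the filtration hypothesis $d$ times). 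Hence $w:=\sum_i a_{id}v_i$ is annihilated by every element of $\mathfrak{L}_{N-1}$, i.e., $w\in M_{N-1}$. Minimality of $N$ together with $N\ge 1$ forces $M_{N-1}=0$, so $w=0$; linear independence of the $v_i$ then gives $a_{id}=0$ for all $i$, contradicting the choice of $d$.

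This proves $\cp M_N=\cp\otimes M_N$, and $\cp M_N\cap M_N=M_N$ is immediate from the resulting direct sum decomposition $\cp\otimes M_N=\bigoplus_{k\ge 0}\partial^k M_N$. Finally, if $M$ is finitely generated as a module over the PID $\cp$, so is the submodule $\cp M_N$, which by the above is free of rank $\dim_\bC M_N$; finite generation forces that rank to be finite. The main obstacle is the bookkeeping with $(\mathrm{ad}\,\partial)^jY$ under the filtration, namely ensuring that for $Y\in\mathfrak{L}_{N+d-1}$ all low-degree terms vanish on $M_N$ while the top-degree term sweeps out the whole of $\mathfrak{L}_{N-1}$.
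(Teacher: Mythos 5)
Your argument is correct, and since the paper only cites this lemma from \cite{CK} without reproducing a proof, your write-up in fact supplies the standard argument from that reference: one shows $\{\partial^j v_i\}$ is $\bC$-linearly independent by applying elements $Y\in\mathfrak{L}_{N+d-1}$ to a putative relation of minimal top degree $d$, using the commutator expansion of $Y\partial^k$ together with the filtration condition to isolate the top coefficients and land them in $M_{N-1}=0$. The only point worth making explicit is that the identity $Y\partial=\partial Y-[\partial,Y]$ in $U(\mathfrak{L})$ presumes $\partial$ is even, which holds in every application here since $\partial$ spans the even summand $\bC\partial$ of $\Lie(R)^e$.
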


	\subsubsection{The cases of $\mathcal{D}_1,\mathcal{D}_4$}
	\hspace{1.5em}For simplicity, consider the following Lie conformal superalgebra $\bar{\mathcal{D}}=\cp A\oplus\cp B\oplus\cp X$ with
	\begin{equation}\label{lambda.D(1)}
		\begin{split}
			&[A_\lambda A]=(\partial+2\lambda)A+Q(\partial,\lambda)B,\ [A_\lambda B]=(\partial+a\lambda+b)B,\\
			&[A_\lambda X]=(\partial+\alpha\lambda+\beta)X,\ [B_\lambda X]=\gamma X,\ [B_\lambda B]=[X_\lambda X]=0,
		\end{split}
	\end{equation}
	where $a,b\in\bC$ and $Q(\partial,\lambda)\in\bC[\partial,\lambda]$ satisfying \textup{\textbf{C1}}, $\alpha,\beta,\gamma\in\bC$. At the same time, these parameters satisfy the following conditions.
	\newline $\bullet$ If $\gamma=0$, then $\bar{\mathcal{D}}=\mathcal{D}_1$.
	\newline $\bullet$ If $(a,b,Q(\partial,\lambda))=(1,0,0)$, then $\bar{\mathcal{D}}=\mathcal{D}_4$.
	
	In the sequel, we use $d$ to denote the total degree of $Q(\partial,\lambda)$ if $Q(\partial,\lambda)$ is not a constant. Otherwise, we let $d=1$. 
	
	\begin{lemma}\label{lem:D(1)}
		The annihilation Lie superalgebra of $\bar{\mathcal{D}}$ is the Lie superalgebra $\Lie\left(\bar{\mathcal{D}}\right)^+$ which has a basis $\{A_m,\, B_n,\, X_s\ |\ m\in\bZ_{\ge-1},\ n,s\in\bZ_+\}$ over $\bC$ %\textup{:}
	%	\begin{equation*}
	%		\{A_m,\, B_n,\, X_s\ |\ %m\in\bZ_{\ge-1},\ n,s\in\bZ_+\}
	%	\end{equation*}
		with the following relations\textup{:}
		\begin{equation}\label{ann.D(1)}
			\begin{split}
				[A_m,A_n]&=(m-n)A_{m+n}+\sum_{i=0}^dc_iB_{m+n+2-i},\\
				[A_m,B_n]&=bB_{m+n+1}+\big((a-1)(m+1)-n\big)B_{m+n},\\
				[A_m,X_s]&=\beta X_{m+s+1}+\big((\alpha-1)(m+1)-s\big)X_{m+s},\\
				[B_n,X_s]&=\gamma X_{n+s},\ \ \ [B_m,B_n]=[X_m,X_n]=0,
			\end{split}
		\end{equation}
		where $c_i\in\bC$ for $0\le i\le d$ depending on $Q(\partial,\lambda)$. Furthermore, the extended annihilation Lie superalgebra is $\Lie\left(\bar{\mathcal{D}}\right)^e=\bC\partial\ltimes\Lie\left(\bar{\mathcal{D}}\right)^+$, which satisfies \eqref{ann.D(1)} and
		\begin{equation}\label{e.ann.D(1)}
			[\partial,A_m]=-(m+1)A_{m-1},\ [\partial,B_n]=-nB_{n-1},\ [\partial,X_s]=-sX_{s-1}.
		\end{equation}
	\end{lemma}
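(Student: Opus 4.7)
The plan is to unpack Definition~\ref{def:annalg} by computing the $j$-th products $a_{(j)}b$ directly from the $\lambda$-brackets in \eqref{lambda.D(1)}, then apply the bracket formula \eqref{ann.lie.bracket} term-by-term. To align the indices with conventional Virasoro-type notation, I would set
\[
A_m := A_{(m+1)} \ (m \ge -1), \qquad B_n := B_{(n)}, \qquad X_s := X_{(s)} \ (n,s \ge 0),
\]
which is an admissible basis since $a_{(n)}=0$ whenever $n$ exceeds the polynomial degree of the $\lambda$-bracket, and by definition $a_{(n)}=0$ for $n<0$. The relations $(\partial a)_{(n)}=-na_{(n-1)}$ in Definition~\ref{def:annalg} will be the one non-trivial substitution used throughout.

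Next I would read off the $j$-th products from the brackets in \eqref{lambda.D(1)} using the expansion $[a_\lambda b]=\sum_j \lambda^{(j)} a_{(j)}b$. For the cross-brackets this is immediate: $A_{(0)}B=(\partial+b)B$, $A_{(1)}B=aB$, $A_{(j)}B=0$ for $j\ge 2$; and similarly $A_{(0)}X=(\partial+\beta)X$, $A_{(1)}X=\alpha X$, $A_{(j)}X=0$ for $j\ge 2$; and $B_{(0)}X=\gamma X$ with higher products zero. For $[A_\lambda A]$ one writes $Q(\partial,\lambda)=\sum_{i=0}^{d}q_i(\partial)\lambda^i$, giving $A_{(0)}A=\partial A + q_0(\partial)B$, $A_{(1)}A=2A+q_1(\partial)B$, and $A_{(j)}A=j!\,q_j(\partial)B$ for $j\ge 2$. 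The $[B_\lambda B]=[X_\lambda X]=0$ relations immediately give the vanishing brackets in the last line of \eqref{ann.D(1)}.

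Then I would substitute into \eqref{ann.lie.bracket}. The brackets $[A_m,B_n]$, $[A_m,X_s]$ and $[B_n,X_s]$ involve only finitely many nonzero $j$-th products (at most $j=0,1$), so each is a short computation: for instance,
\[
[A_m,X_s]=\bigl((\partial+\beta)X\bigr)_{(m+s+1)}+(m+1)(\alpha X)_{(m+s)}=\beta X_{m+s+1}+\bigl((\alpha-1)(m+1)-s\bigr)X_{m+s},
\]
after applying $(\partial X)_{(m+s+1)}=-(m+s+1)X_{(m+s)}$. The same pattern yields the stated $[A_m,B_n]$ and $[B_n,X_s]$. For $[A_m,A_n]$ the sum runs over all $j\le m+1$, producing the Virasoro part $(m-n)A_{m+n}$ from the $(\partial+2\lambda)A$ term (again via $(\partial A)_{(\cdot)}=-\cdot\, A_{(\cdot -1)}$) and a finite $B$-tail $\sum_{i=0}^{d}c_i B_{m+n+2-i}$ whose coefficients $c_i$ depend polynomially on $m,n$ and the $q_i(\partial)$ after again rewriting $(\partial^k B)_{(\cdot)}$ via the $\partial$-action rule. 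The extended relations \eqref{e.ann.D(1)} are then immediate from \eqref{extended.ann.lie}.

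The only step that requires any care is the $[A_m,A_n]$ computation: one must systematically replace every occurrence of $(\partial^k q_j(\partial) B)_{(\cdot)}$ by the correct shifted $B_{(\cdot - k)}$ with the binomial weights from $\binom{m+1}{j}$. I would do this by induction on $k$ using the identity $(\partial a)_{(n)}=-na_{(n-1)}$, which guarantees that the $B$-contribution indeed collapses into the finite sum $\sum_{i=0}^{d} c_i B_{m+n+2-i}$ as asserted; the precise form of the $c_i$ is not needed for the representation-theoretic arguments to follow, which is presumably why the lemma records only the existence of such constants.
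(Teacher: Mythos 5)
Your proposal is correct and follows essentially the same route as the paper's proof: extract the $j$-th products from \eqref{lambda.D(1)}, substitute into \eqref{ann.lie.bracket} using $(\partial a)_{(n)}=-na_{(n-1)}$, relabel $A_m=A_{(m+1)}$, $B_n=B_{(n)}$, $X_s=X_{(s)}$, and read off \eqref{e.ann.D(1)} from \eqref{extended.ann.lie}. The sample computation of $[A_m,X_s]$ checks out, and the treatment of the $B$-tail in $[A_m,A_n]$ matches the paper's (which likewise records only the existence of the constants $c_i$).
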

	\begin{proof}
		We suppose that $Q(\partial,\lambda)=\sum_{i=0}^kQ_i(\partial)\lambda^i$ for some $k\in\bZ_+$ and $Q_i(\partial)\in\cp$. By \eqref{jth prdouct} and \eqref{lambda.D(1)}, we have
		\begin{align*}
			A_{(0)}A=\partial A+Q_0(\partial)B,\ A_{(1)}A=2A+Q_1(\partial)B,\ A_{(i)}A=Q_i(\partial)i!B\ \textit{ for }\ 2\le i\le k,\\
			A_{(0)}B=(\partial+b)B,\ A_{(1)}B=aB,\ A_{(0)}X=(\partial+\beta)X,\ A_{(1)}X=\alpha X,\ B_{(0)}X=\gamma X,
		\end{align*}
		and the other items are all zero. Then, by \eqref{ann.lie.bracket}, for all $m,n\in\bZ_+$, we get
		\begin{align*}
			\quad[A_{(m)},A_{(n)}]&=\sum_{j\in\bZ_+}\binom{m}{j}(A_{(j)}A)_{(m+n-j)}\\
			&=(A_{(0)}A)_{(m+n)}+m(A_{(1)}A)_{(m+n-1)}+\sum_{j=2}^k\binom{m}{j}(A_{(j)}A)_{(m+n-j)}\\
			&=(\partial A)_{(m+n)}+m(2A)_{(m+n-1)}+\sum_{j=0}^k\binom{m}{j}\big(j!Q_j(\partial)B\big)_{(m+n-j)}\\
			&=(m-n)A_{(m+n-1)}+\sum_{i=0}^dc_iB_{(m+n-i)},
		\end{align*}
		where $c_i\in\bC$ depending on $Q(\partial,\lambda)$, $m$ and $n$. Similarly, we can deduce that
		\begin{align*}
			[A_{(m)},B_{(n)}]&=bB_{(m+n)}+\big((a-1)m-n\big)B_{(m+n-1)},\\
			[A_{(m)},X_{(s)}]&=\beta X_{(m+s)}+\big((\alpha-1)m-s\big)X_{(m+s-1)},\\
			[B_{(m)},X_{(s)}]&=\gamma X_{(m+s)},\ \ \ [B_{(n)},B_{(s)}]=[X_{(n)},X_{(s)}]=0.
		\end{align*}
		Taking $A_m=A_{(m+1)}$ for $m\ge-1$ and $B_n=B_{(n)}$, $X_n=X_{(n)}$ for $n\ge 0$, we obtain the Lie brackets in \eqref{ann.D(1)}. Finally, we get \eqref{e.ann.D(1)} by the definition of the extended annihilation Lie superalgebra in \eqref{extended.ann.lie}.
	\end{proof}
	
	Denote $\Lie\left(\bar{\mathcal{D}}\right)^e$ by $\mathcal{L}$. Next, we define $\mathcal{L}_n=\sum_{i\ge n}\left(\bC A_{i+d}\oplus\bC B_i\oplus\bC X_i\right)$ for some $n\in\bZ_+$, which is a linear subspace of $\Lie\left(\bar{\mathcal{D}}\right)^+$. Furthermore, we define $B_{-1}=X_{-1}=0$ and $\mathcal{L}_{-1}=\Lie\left(\bar{\mathcal{D}}\right)^+$. Therefore, we get a descending sequence of subspaces $\mathcal{L}\supset\mathcal{L}_{-1}\supset\mathcal{L}_{0}\supset\cdots\mathcal{L}_{n}\supset\cdots$. It is not difficult to derive the following lemma from direct computation.
	\begin{lemma}\label{ck.conditions}
		\subno{1} $[\mathcal{L}_m,\mathcal{L}_n]\subset\mathcal{L}_{m+n}$ for $m,n\in\bZ_+$. In particular, $\mathcal{L}_n$ is an ideal of $\mathcal{L}_0$ for all $n\in\bZ_+$.
		
		\subno{2} For all $n\in\bZ_+$, $[\partial,\mathcal{L}_n]=\mathcal{L}_{n-1}$.
	\end{lemma}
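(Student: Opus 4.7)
The plan is to verify both claims by direct bookkeeping on indices, using the explicit brackets of Lemma~\ref{lem:D(1)} and the action of $\partial$ in \eqref{e.ann.D(1)}. Since $\mathcal{L}_n$ is spanned by the generators $\{A_{i+d}, B_i, X_i : i \geq n\}$, part (1) reduces to verifying that each non-trivial bracket type produces generators whose shifted indices still lie in $\mathcal{L}_{m+n}$. For instance, for $i \geq m$ and $j \geq n$, \eqref{ann.D(1)} gives $[A_{i+d}, A_{j+d}] = (i-j) A_{i+j+2d} + \sum_{k=0}^{d} c_k B_{i+j+2d+2-k}$; writing $A_{i+j+2d} = A_{(i+j+d)+d}$ shows the $A$-term lies in $\mathcal{L}_{m+n}$ because $i+j+d \geq m+n$, while each $B$-index is at least $i+j+d+2 \geq m+n$. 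The brackets $[A_{i+d}, B_j]$, $[A_{i+d}, X_j]$, and $[B_i, X_j]$ are handled by analogous index-shift arguments, and $[B,B] = [X,X] = 0$. Specializing to $m = 0$ then yields that $\mathcal{L}_n$ is an ideal of $\mathcal{L}_0$.

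For part (2), the inclusion $[\partial, \mathcal{L}_n] \subseteq \mathcal{L}_{n-1}$ is immediate: for $i \geq n$, the identities $[\partial, A_{i+d}] = -(i+d+1) A_{(i-1)+d}$, $[\partial, B_i] = -i B_{i-1}$, and $[\partial, X_i] = -i X_{i-1}$ all land in $\mathcal{L}_{n-1}$ (using $B_{-1} = X_{-1} = 0$ when $i = 0$). Conversely, for $n \geq 1$ and any generator of $\mathcal{L}_{n-1}$, say $A_{j+d}$, $B_j$, or $X_j$ with $j \geq n-1$, I invert these identities: $A_{j+d} = -\frac{1}{j+d+2}[\partial, A_{(j+1)+d}]$, $B_j = -\frac{1}{j+1}[\partial, B_{j+1}]$, and $X_j = -\frac{1}{j+1}[\partial, X_{j+1}]$, where the target element lies in $\mathcal{L}_n$ and each scalar coefficient is nonzero (note that $j+1 \geq n \geq 1$).

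There is no genuine obstacle here, only careful bookkeeping of index shifts. The design choice of using $A_{i+d}$ rather than $A_i$ in the definition of $\mathcal{L}_n$ is exactly what absorbs the degree-$d$ inhomogeneous term $\sum_k c_k B_{\bullet}$ in $[A,A]$, and it simultaneously keeps the coefficient $-(i+d+1)$ in $[\partial, A_{i+d}]$ nonzero so that the second equality holds for all $n \geq 1$, which is precisely what Cheng--Kac's lemma (Lemma~\ref{CK.lemma}) requires.
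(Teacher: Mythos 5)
Your argument is correct and is precisely the direct index computation that the paper leaves to the reader (it offers no proof beyond the remark that the lemma follows ``from direct computation''), including the key observation that the shift to $A_{i+d}$ in the definition of $\mathcal{L}_n$ is exactly what absorbs the inhomogeneous terms $B_{m+n+2-i}$ in $[A_m,A_n]$. One caveat: you establish the reverse inclusion in part (2) only for $n\ge 1$, and this restriction is in fact forced rather than a gap on your side --- for $n=0$ the stated equality fails, since $A_{-1},\dots,A_{d-2}$ lie in $\mathcal{L}_{-1}=\Lie(\bar{\mathcal{D}})^+$ while the lowest $A$-index occurring in $[\partial,\mathcal{L}_0]$ is $d-1\ge 0$, so the paper's ``for all $n\in\bZ_+$'' in part (2) must be read as $n\ge 1$ (only the inclusion $[\partial,\mathcal{L}_0]\subseteq\mathcal{L}_{-1}$ survives at $n=0$).
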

	
	Assume that $M$ is a FNICM over $\bar{\mathcal{D}}$. Let $M_n=\{v\in M\,|\,\mathcal{L}_n v=0\}$. By Proposition~\ref{prop:conformal}, there exists some integer $n$ such that $M_n\neq\{0\}$. If $n=-1$, then $\mathcal{L}_{-1} v=0$. We can take $0\neq v\in M_{-1}$. Then $U(\mathcal{L})v=\cp U(\mathcal{L}_{-1})v=\cp v$, where $U(\mathfrak{g})$ is the universal enveloping algebra of Lie superalgebra $\mathfrak{g}$. So, $M=\cp v$ by the irreducibility of $M$. We can deduce that $\mathcal{L}_{-1}$ acts trivially on $M$, since $\mathcal{L}_{-1}$ is an ideal of $\mathcal{L}$. Thus, $M$ is an irreducible $\cp$-module with one dimension. Equivalently, $M=\cp v$ is a trivial conformal module, which is a contradiction.
	
	Hence, we can obtain that $n\ge 0$. Note that we can take $\mathfrak{L}_m=\mathcal{L}_{m-1}$ for $m\ge 0$ here. This together with Lemma~\ref{CK.lemma} and Lemma~\ref{ck.conditions} shows that there exists a minimal integer $N\ge 1$ such that $M_N\neq\{0\}$ and $M_N$ is finite-dimensional. Now, we give the following lemma.
	\begin{lemma}\label{lem:D(1).mod.rank1}
		Let $M$ be the finite nontrivial conformal $\bar{\mathcal{D}}$-module. Then there exists a nonzero element $v\in M$ such that $\cp v$ is a conformal $\bar{\mathcal{D}}$-submodule of rank one as a $\cp$-module. In particular, if $M$ is irreducible, then $M=\cp v$.
	\end{lemma}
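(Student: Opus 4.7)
The plan is to leverage the Cheng--Kac framework already set up -- a minimal $N$ with $M_N \neq \{0\}$ and $M_N$ finite-dimensional -- together with the super version of Lie's theorem, to produce a distinguished vector $v \in M_N$, and then to argue that $\cp v$ is closed under the $\bar{\mathcal{D}}$-action by pinning down how the low-index operators $A_{-1}, A_0, \ldots, A_{d-1}$ (which lie outside $\mathcal{L}_0$) act on $v$.

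First, by Lemma~\ref{ck.conditions}(1), $\mathcal{L}_N$ is an ideal of $\mathcal{L}_0$, so $M_N$ carries the structure of a module over the finite-dimensional Lie superalgebra $\mathcal{L}_0/\mathcal{L}_N$. Inspecting the brackets in Lemma~\ref{lem:D(1)} shows that every iterated commutator strictly raises the generator index modulo $\mathcal{L}_N$, so $\mathcal{L}_0/\mathcal{L}_N$ is nilpotent, hence solvable. Its odd part is spanned by the images of $X_0, \ldots, X_{N-1}$, which pairwise commute, so $[(\mathcal{L}_0/\mathcal{L}_N)_\op, (\mathcal{L}_0/\mathcal{L}_N)_\op] = 0$; Lemma~\ref{lem:lie.sup.thm.app}(1) then forces every finite-dimensional irreducible $\mathcal{L}_0/\mathcal{L}_N$-module to be one-dimensional. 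A minimal nonzero submodule of $M_N$ therefore yields a nonzero $v \in M_N$ and a character $\chi \colon \mathcal{L}_0 \to \bC$ with $\ell v = \chi(\ell) v$ for every $\ell \in \mathcal{L}_0$.

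Next, I collect the easy consequences. The parity of $v$ forces $\chi(X_s) = 0$ for all $s \geq 0$, since $X_s v$ has opposite parity to $v$ but must lie in $\bC v$; hence $X_\lambda v = 0$. Similarly $B_\lambda v = \sum_{n \geq 0} \lambda^{(n)} \chi(B_n) v \in \bC[\lambda] v$, and the portion of $A_\lambda v$ coming from $m \geq d$ is $\sum_{m \geq d} \lambda^{(m+1)} \chi(A_m) v \in \bC[\lambda] v$. Thus all three of $A_\lambda v$, $B_\lambda v$, $X_\lambda v$ already lie in $\cp[\lambda] v$ except for the finitely many unknown contributions $A_m v$ with $-1 \leq m \leq d-1$.

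The main obstacle is to show that $A_m v \in \cp v$ for these low indices. My plan is a downward induction on $m$ from $d-1$ to $-1$, exploiting the commutator $[\partial, A_{m+1}] = -(m+2) A_m$ from \eqref{e.ann.D(1)}. At step $m$, set $w_m := A_m v$; for $\ell \in \mathcal{L}_N$ one has $\ell w_m = [\ell, A_m] v$, and expanding $[\ell, A_m]$ via Lemma~\ref{lem:D(1)} produces a combination of scalars (from $\chi$) and of $A_k v$ with $k > m$, all of which lie in $\cp v$ by the previous inductive steps. Using the Cheng--Kac identification $\cp M_N = \cp \otimes M_N$, so that $\cp v \cap M_N = \bC v$, one can then select a polynomial $p_m(\partial) \in \cp$ matching these scalar profiles so that $w_m - p_m(\partial) v$ is annihilated by $\mathcal{L}_N$ and acts under $\mathcal{L}_0$ with the character $\chi$; this forces $w_m - p_m(\partial) v \in \bC v$, and hence $A_m v \in \cp v$. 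Once the induction is complete, conformal sesquilinearity promotes $A_\lambda v \in \cp[\lambda] v$, so $\cp v$ is a conformal $\bar{\mathcal{D}}$-submodule; and since $v$ is torsion-free by Lemma~\ref{lem:free.finite.rank.mod}, $\cp v$ is free of rank one as a $\cp$-module. Finally, if $M$ is irreducible, the nonzero submodule $\cp v$ must coincide with $M$, giving $M = \cp v$.
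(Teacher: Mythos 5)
Your setup follows the paper's Cheng--Kac skeleton (minimal $N$ with $M_N\neq\{0\}$, $M_N$ finite-dimensional, super Lie's theorem giving a one-dimensional eigenspace), and you correctly identify that the real issue is the action of the low-index operators $A_{-1},A_0,\dots,A_{d-1}$, which lie outside $\mathcal{L}_0$. But your resolution of that issue has a genuine gap. The paper avoids the problem entirely by enlarging the solvable algebra \emph{before} invoking Lie's theorem: it sets $\mathcal{K}=\bC(\partial-A_{-1})\oplus\mathcal{L}_0\oplus\sum_{i=0}^{d-1}\bC A_i$, checks via an explicit derived-series computation that $\mathcal{K}/\mathcal{L}_N$ is solvable, and then gets a single vector $v$ that is simultaneously an eigenvector for \emph{all} of $\mathcal{K}$ — in particular $(\partial-A_{-1})v=\ell(\partial-A_{-1})v$ gives $A_{-1}v=(\partial+\eta)v$ and $A_iv,B_iv\in\bC v$ at once, so $\cp v$ is a submodule with no further work. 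You instead apply Lie's theorem only to $\mathcal{L}_0/\mathcal{L}_N$ and then try to recover $A_mv\in\cp v$ for $-1\le m\le d-1$ by downward induction.

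Two steps of that induction are unjustified. First, the existence of a single polynomial $p_m(\partial)$ ``matching these scalar profiles'' so that $w_m-p_m(\partial)v$ is killed by $\mathcal{L}_N$ \emph{and} is a $\chi$-eigenvector for $\mathcal{L}_0$ is a finite but nontrivial system of compatibility conditions (note $\ell(p_m(\partial)v)$ for $\ell\in\mathcal{L}_0$ involves commuting $\ell$ past powers of $\partial$, producing lower-index generators); you assert solvability rather than prove it. Second, and more fundamentally, even granting such a $p_m$, the conclusion ``this forces $w_m-p_m(\partial)v\in\bC v$'' does not follow: you obtained $v$ from a \emph{minimal} submodule of $M_N$, so all you know is that $\bC v$ is one-dimensional — the full $\chi$-weight space of $M_N$ under $\mathcal{L}_0$ could be larger, and $w_m-p_m(\partial)v$ could land anywhere in it, in which case $A_mv\notin\cp v$. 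A smaller point: your claim that $\mathcal{L}_0/\mathcal{L}_N$ is nilpotent because brackets ``strictly raise the index'' fails when $\gamma\neq 0$, since $[B_0,X_0]=\gamma X_0$ keeps $X_0$ in every term of the lower central series; solvability (which is what Lemma~\ref{lem:lie.sup.thm.app}(1) actually needs, and which the paper verifies by computing $\mathcal{L}_0^{(2)}\subseteq\mathcal{L}_1$) does hold, so this particular slip is repairable.
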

	\begin{proof}
		Set $\mathcal{K}=\bC(\partial-A_{-1})\oplus\mathcal{L}_0\oplus\sum_{i=0}^{d-1}\bC A_i$. Then $\mathcal{K}$ is a subalgebra of $\mathcal{L}$ and $[\mathcal{K},\mathcal{L}_n]\subset\mathcal{L}_n$ for any $n\ge 0$. Therefore $M_N$ is a nonzero nontrivial finite-dimensional $\mathcal{K}/\mathcal{L}_N$-module. Note that
		\begin{equation*}
			\mathcal{K}^{(1)}=[\mathcal{K},\mathcal{K}]\subseteq\mathcal{L}_0\oplus\sum_{i=0}^{d-1}\bC A_i,\ \mathcal{K}^{(2)}=[\mathcal{K}^{(1)},\mathcal{K}^{(1)}]\subseteq\mathcal{L}_0\oplus\sum_{i=1}^{d-1}\bC A_i,\cdots,
		\end{equation*}
		and $\mathcal{K}^{(d+1)}=[\mathcal{K}^{(d)},\mathcal{K}^{(d)}]\subseteq\mathcal{L}_0$ inductively. Moreover, 
		\begin{equation*}
			\mathcal{L}_0^{(1)}=[\mathcal{L}_0,\mathcal{L}_0]\subseteq\mathcal{L}_1\oplus\bC X_0,\ \mathcal{L}_0^{(2)}=[\mathcal{L}_0^{(1)},\mathcal{L}_0^{(1)}]\subseteq\mathcal{L}_1.
		\end{equation*}
		Then, by Lemma~\ref{ck.conditions}, we have $\mathcal{L}_0^{(n)}\subseteq\mathcal{L}_{n-1}$ for $n\ge 2$. Hence, there exists an integer $k$ depending on $d$ and $N$ such that $\mathcal{K}^{(k)}\subseteq\mathcal{L}_N$. So $\mathcal{K}/\mathcal{L}_N$ is finite-dimensional and solvable. By the super analog of Lie's theorem (see Lemma~\ref{lem:lie.sup.thm.app}(1), notice that $[X_m,X_n]=0$), the module $M_N$ is of dimension one. It follows that there exists a nonzero element $v\in M_N$ and a linear function $\ell$ on $\mathcal{K}$ such that $xv=\ell(x)v$ for all $x\in\mathcal{K}$. In other words, $A_{-1}v=(\partial+\eta)v$ for some $\eta\in\bC$, $A_i v=\ell(A_i)v$ and $B_i v=\ell(B_i)v$ for $i\ge 0$. $X_i v=0$ since the action of $X$ changes the parity for $i\ge 0$. Thus, $\cp v$ is of rank one as a conformal submodule over $\bar{\mathcal{D}}$. It is straightforward to check that $\cp v=\cp\otimes\bC v$ is a submodule of $M$. By the irreducibility of $M$ and Lemma~\ref{CK.lemma}, we get the conclusion. 
	\end{proof}
	
	\begin{proposition}\label{prop:Dbar.mod}
		The finite nontrivial conformal module over $\bar{\mathcal{D}}$ is of the form $M_{\Delta,\eta,\omega}=\cp v$ with
		\begin{equation}\label{M.delta.eta.omega}
			A_\lambda v=(\partial+\Delta\lambda+\eta)v,\ \ B_\lambda v=\omega v,\ \ X_\lambda v=0,
		\end{equation}
		where $\Delta,\eta,\omega\in\bC$, and if $(a,b,Q(\partial,\lambda))\neq(1,0,0)$, then $\omega=0$. Moreover, if $(a,b,Q(\partial,\lambda))\neq(1,0,0)$, then $M_{\Delta,\eta,\omega}=\cp v$ is irreducible if and only if $\Delta\neq 0$. If $(a,b,Q(\partial,\lambda))=(1,0,0)$, then $M_{\Delta,\eta,\omega}=\cp v$ is irreducible if and only if $\Delta\neq 0$ or $\omega\neq 0$.
	\end{proposition}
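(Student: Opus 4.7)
The plan is to build on Lemma~\ref{lem:D(1).mod.rank1}, which already guarantees $M = \cp v$ for some generator $v$ satisfying $A_{-1}v = (\partial+\eta)v$, $A_i v \in \bC v$ and $B_i v \in \bC v$ for $i \geq 0$, and $X_i v = 0$ for all $i \geq 0$. Reassembling these data via \eqref{jth.action}, I record
\begin{align*}
A_\lambda v = (\partial+\eta+h_A(\lambda))v, \quad B_\lambda v = h_B(\lambda) v, \quad X_\lambda v = 0,
\end{align*}
for some $h_A(\lambda), h_B(\lambda) \in \bC[\lambda]$, and the task is to pin down these polynomials.

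First I would feed the bracket $[A_\lambda B] = (\partial+a\lambda+b)B$ into \eqref{conmod.2} applied to $v$; after a short cancellation this reduces to
\begin{align*}
-\mu\, h_B(\mu) = \bigl((a-1)\lambda - \mu + b\bigr)\, h_B(\lambda+\mu).
\end{align*}
Either $h_B \equiv 0$, or specializing $\lambda = 0$ forces $b = 0$ and then $\mu = 0$ forces $(a-1)\lambda\, h_B(\lambda) = 0$, yielding $a = 1$ and $h_B(\lambda) = \omega$ a constant. Next, applying \eqref{conmod.2} to $[A_\lambda A] = (\partial+2\lambda)A + Q(\partial,\lambda)B$ and collecting terms gives
\begin{align*}
\lambda\, h_A(\lambda) - \mu\, h_A(\mu) = (\lambda-\mu)\, h_A(\lambda+\mu) + Q(-\lambda-\mu,\lambda)\, h_B(\lambda+\mu).
\end{align*}
A monomial-by-monomial comparison (the coefficient of $\lambda^n\mu$ with $n\geq 2$ on the left is zero, while on the right it forces $(n-1)a_n = 0$) shows $h_A$ has degree at most $1$, i.e.\ $h_A(\lambda) = \Delta\lambda+\eta'$; absorbing $\eta'$ into $\eta$ gives $A_\lambda v = (\partial+\Delta\lambda+\eta)v$. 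If moreover $h_B = \omega \neq 0$, then $a = 1,\, b = 0$ and Condition~\textup{\textbf{C1}} gives $Q(\partial,\lambda) = c(\partial+2\lambda)$, so $Q(-\lambda-\mu,\lambda) = c(\lambda-\mu)$ and the identity collapses to $c\omega(\lambda-\mu) = 0$, forcing $c = 0$ and hence $Q \equiv 0$. Thus $h_B \neq 0$ occurs only in the case $(a,b,Q(\partial,\lambda)) = (1,0,0)$, matching the statement. The remaining axioms $[B_\lambda B] = 0$, $[A_\lambda X]$, $[B_\lambda X]$, $[X_\lambda X]=0$ are immediately verified on $v$ since $X_\lambda v = 0$ and $h_B$ is a scalar.

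For the irreducibility claim, any proper nonzero submodule of the torsion-free module $\cp v$ must be $p(\partial)\cp v$ for some nonzero $p(\partial) \in \cp$. $A$-invariance demands $p(\partial+\lambda)(\partial+\Delta\lambda+\eta)$ be divisible by $p(\partial)$ in $\bC[\partial,\lambda]$; at any root $x_0$ of $p$, this forces $x_0 + \Delta\lambda+\eta \equiv 0$ in $\lambda$, so $\Delta = 0$ and $x_0 = -\eta$, and a multiplicity check at $\partial = -\eta$ rules out $(\partial+\eta)^k$ for $k \geq 2$. The candidate $(\partial+\eta)\cp v$ is then $B$-invariant only if $(\partial+\lambda+\eta)\omega$ is divisible by $\partial+\eta$, which forces $\omega = 0$. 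Hence a proper submodule exists exactly when $\Delta = 0$ and $\omega = 0$: this gives irreducibility $\iff \Delta \neq 0$ when $(a,b,Q) \neq (1,0,0)$ (where $\omega=0$ is forced), and $\iff \Delta \neq 0$ or $\omega \neq 0$ when $(a,b,Q) = (1,0,0)$. The main obstacle is the careful polynomial analysis of the $[A_\lambda A]$ identity, which must simultaneously pin down the shape of $h_A$ and extract the auxiliary constraint $Q \equiv 0$ in the $h_B \neq 0$ branch; everything else is routine bookkeeping.
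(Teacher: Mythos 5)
Your argument is correct, but it takes a more self-contained route than the paper for the classification half. Where you re-derive the shape of the $A$- and $B$-actions on the rank-one module $\cp v$ by solving the polynomial identities coming from \eqref{conmod.2} applied to $[A_\lambda B]$ and $[A_\lambda A]$ --- and in the process independently recover the constraint that $\omega\neq 0$ forces $(a,b,Q(\partial,\lambda))=(1,0,0)$ --- the paper simply observes that $X_\lambda v=0$ by parity, restricts $M$ to the even part $\bar{\mathcal{D}}_\ep$, and quotes Proposition~\ref{rank2.lie.con.alg.mod}(4) (the classification of rank-one conformal modules over rank-two Lie conformal algebras from \cite{XHW}), which already contains both the form $A_\lambda v=(\partial+\Delta\lambda+\eta)v$, $B_\lambda v=\omega v$ and the vanishing of $\omega$ outside the Heisenberg--Virasoro case. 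Your computation costs a page of bookkeeping but makes the proposition independent of that external result; one small presentational point is that the monomial comparison forcing $\deg h_A\le 1$ should be stated as using the already-established fact that in the $h_B\neq 0$ branch $Q(-\lambda-\mu,\lambda)h_B(\lambda+\mu)=c\omega(\lambda-\mu)$ contributes no monomials $\lambda^n\mu$ with $n\ge 2$. For irreducibility the two arguments are essentially equivalent in content: you classify all $\cp$-submodules as $p(\partial)\cp v$ and test invariance directly, while the paper reduces the $\omega=0$ cases to the known structure of $\Vir$-modules (Proposition~\ref{vir.mod}) and handles $\Delta=0$, $\omega\neq 0$ by the same leading-coefficient trick with $B_\lambda u=\omega\varphi(\partial+\lambda)v$ that underlies your divisibility check. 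Both reach the same dichotomy: a proper nonzero submodule exists exactly when $\Delta=\omega=0$.
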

	\begin{proof}
		On the one hand, $X_\lambda$ must change the parity, which gives $X_\lambda v=0$. On the other hand, $M$ is also the conformal module over the even part of $\bar{\mathcal{D}}$, so we can get the $\lambda$-actions of $A$ and $B$ in \eqref{M.delta.eta.omega} directly by Proposition~\ref{rank2.lie.con.alg.mod}(4). In other words, $M_{\Delta,\eta,\omega}$ exhausts all the nontrivial rank one free conformal modules over $\bar{\mathcal{D}}$. Hence, we only need to show its irreducibility.
		
		If $(a,b,Q(\partial,\lambda))\neq(1,0,0)$, then $\omega=0$ by Proposition~\ref{rank2.lie.con.alg.mod}(4). $M_{\Delta,\eta,0}$ as a $\bar{\mathcal{D}}$-module is actually equivalent to $V_{\Delta,\eta}$ as a $\Vir$-module. Its irreducibility can be obtained by $\Vir$-module in Proposition~\ref{vir.mod} directly.
		
		Now let $(a,b,Q(\partial,\lambda))=(1,0,0)$. Suppose $\Delta=\omega=0$, then $M_{0,\eta,0}$ as a $\bar{\mathcal{D}}$-module is equivalent to $V_{0,\eta}$ as a $\Vir$-module. Then, by Proposition~\ref{vir.mod}, we have $(\partial+\eta)M_{0,\eta,0}\cong M_{1,\eta,0}$ and $(\partial+\eta)M_{0,\eta,0}$ is the nontrivial submodule of $M_{0,\eta,0}$, which contradicts with the irreducibility of $M_{0,\eta,0}$. Conversely, if $\Delta=0$ and $\omega\neq 0$, assume that $U$ is a nonzero submodule of $M_{0,\eta,\omega}$. Then, there exists $u=\varphi(\partial)v\in U$ for some nonzero $\varphi(\partial)\in\cp$. If $\deg\varphi(\partial)=0$, then $v\in U$ and $U=M_{0,\eta,\omega}$. If $\deg\varphi(\partial)=k>0$, then $B_\lambda u=\varphi(\partial+\lambda)(B_\lambda v)=\omega\varphi(\partial+\lambda)v\in\bC[\lambda]\otimes U$. The coefficient of $\lambda^k$ in $\omega\varphi(\partial+\lambda)v$ is a nonzero multiple of $v$, which implies $v\in U$ and $U=M_{0,\eta,\omega}$. Therefore $M_{0,\eta,\omega}$ is irreducible. If $\Delta\neq 0$, $M_{\Delta,\eta,\omega}$ is an irreducible $\Vir$-module. Furthermore, it is an irreducible $\bar{\mathcal{D}}$-module.
	\end{proof}

	\subsubsection{The case of $\mathcal{D}_2$, $\mathcal{D}_3$} 
	\hspace{1.5em}The following lemma can be proved similarly by referring to Lemma~\ref{ann.D(1)}.
	\begin{lemma}\label{lem:D2}
		The annihilation Lie superalgebra of $\mathcal{D}_2$ is the Lie superalgebra $\Lie(\mathcal{D}_2)^+$ which has a basis $\{A_m,\, B_n,\, X_s\ |\ m\in\bZ_{\ge-1},\, n,s\in\bZ_+\}$ over $\bC$ %\textup{:}
		%\begin{equation*}
		%	\{A_m,\, B_n,\, X_s\ |\ %m\in\bZ_{\ge-1},\, n,s\in\bZ_+\}
	%	\end{equation*}
		with the following super-brackets\textup{:}
		\begin{equation}\label{ann.D2}
			\begin{split}
				[A_m,A_n]&=(m-n)A_{m+n}+\sum_{i=0}^dc_iB_{m+n+2-i},\\
				[A_m,B_n]&=bB_{m+n+1}+\big((a-1)(m+1)-n\big)B_{m+n},\\
				[A_m,X_s]&=\frac{1}{2}\left(bX_{m+s+1}+\big((a-1)(m+1)-2s\big)X_{m+s}\right),\\
				[X_r,X_s]&=2B_{r+s},\ \ \ [B_m,B_n]=[B_n,X_s]=0,
			\end{split}
		\end{equation}
		where $c_i\in\bC$ for $0\le i\le d$ depending on $Q_2(\partial,\lambda)$ and $d$ is its total degree. Furthermore, the extended annihilation Lie superalgebra is $\Lie(\mathcal{D}_2)^e=\bC\partial\ltimes\Lie(\mathcal{D}_2)^+$, which satisfies \eqref{ann.D2} and
		\begin{equation*}
			[\partial,A_m]=-(m+1)A_{m-1},\ [\partial,B_n]=-nB_{n-1},\ [\partial,X_s]=-sX_{s-1}.
		\end{equation*}
	\end{lemma}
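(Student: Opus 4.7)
The plan is to mimic almost verbatim the strategy of Lemma~\ref{lem:D(1)}: read off the $j$-th products from the defining $\lambda$-brackets of $\mathcal{D}_2$ via \eqref{jth prdouct}, plug them into the super-bracket formula \eqref{ann.lie.bracket} of the annihilation superalgebra, and then pass to the stated indexing $A_m = A_{(m+1)}$ (for $m\ge -1$), $B_n = B_{(n)}$, $X_s = X_{(s)}$. Writing $Q_2(\partial,\lambda)=\sum_{i=0}^d Q_{2,i}(\partial)\lambda^i$, the brackets of $\mathcal{D}_2$ immediately yield: $A_{(0)}A = \partial A + Q_{2,0}(\partial)B$, $A_{(1)}A = 2A + Q_{2,1}(\partial)B$, $A_{(i)}A = i!\,Q_{2,i}(\partial)B$ for $2\le i\le d$; $A_{(0)}B=(\partial+b)B$, $A_{(1)}B = aB$; $A_{(0)}X = (\partial+\tfrac{b}{2})X$, $A_{(1)}X = \tfrac{a+1}{2}X$; $B_{(j)}X = 0$ for all $j$; and the single novelty compared with $\bar{\mathcal{D}}$, namely $X_{(0)}X = 2B$ with $X_{(j)}X = 0$ for $j\ge 1$.

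Next, I would run the same bookkeeping as in Lemma~\ref{lem:D(1)} to evaluate each commutator. The calculations of $[A_{(m)},A_{(n)}]$ and $[A_{(m)},B_{(n)}]$ are identical to those of $\bar{\mathcal{D}}$, producing the first two lines of \eqref{ann.D2}. For $[A_{(m)},X_{(s)}]$ one uses $(\partial X)_{(n)}=-nX_{(n-1)}$ to get $\tfrac{b}{2}X_{(m+s)} + \bigl(\tfrac{(a-1)m}{2}-s\bigr)X_{(m+s-1)}$; after the shift $m\mapsto m+1$ this is exactly the factor $\tfrac{1}{2}\bigl(bX_{m+s+1} + ((a-1)(m+1)-2s)X_{m+s}\bigr)$ asserted in \eqref{ann.D2}. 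The only genuinely new piece is
\begin{equation*}
[X_{(r)},X_{(s)}] = \sum_{j\in\bZ_+}\binom{r}{j}(X_{(j)}X)_{(r+s-j)} = (X_{(0)}X)_{(r+s)} = 2B_{(r+s)} = 2B_{r+s},
\end{equation*}
which gives the odd-odd bracket, while $[B_n,X_s]=0$ follows from $B_{(j)}X=0$ and $[B_m,B_n]=0$ from $B_{(j)}B=0$.

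Finally, the $[\partial,-]$ relations defining $\Lie(\mathcal{D}_2)^e = \bC\partial\ltimes\Lie(\mathcal{D}_2)^+$ are immediate from the universal rule \eqref{extended.ann.lie}: $[\partial,a_{(n)}] = -n a_{(n-1)}$ applied to each generator, with the shift $A_m = A_{(m+1)}$ producing $[\partial,A_m]=-(m+1)A_{m-1}$. There is no real obstacle here—the proof is a mechanical translation of $\lambda$-brackets into super-commutators—so the only point deserving care is tracking the coefficient $\tfrac{1}{2}$ coming from the half-weights $\tfrac{a+1}{2},\tfrac{b}{2}$ in $[A_\lambda X]$, and verifying that the $X_{(0)}X = 2B$ contribution produces the odd bracket $[X_r,X_s]=2B_{r+s}$ without interacting with any other term (which it doesn't, since $X_{(j)}X=0$ for $j\ge 1$).
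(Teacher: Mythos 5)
Your proposal is correct and is exactly the argument the paper intends: the paper omits the proof of this lemma and simply refers back to Lemma~\ref{lem:D(1)}, whose computation you reproduce faithfully, with the correct new ingredients $A_{(0)}X=(\partial+\tfrac{b}{2})X$, $A_{(1)}X=\tfrac{a+1}{2}X$ (yielding the factor $\tfrac12$ in $[A_m,X_s]$ after the index shift $m\mapsto m+1$) and $X_{(0)}X=2B$ (yielding $[X_r,X_s]=2B_{r+s}$). No gaps.
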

	
	The following work is similar to the case of $\bar{\mathcal{D}}$. We only explain the notations and the proof is omitted. Denote $\Lie(\mathcal{D}_2)^e$ by $\mathcal{L}$. Let $\mathcal{L}_n=\sum_{i\ge n}\left(\bC A_{i+d}\oplus\bC B_i\oplus\bC X_i\right)$ for some $n\in\bZ_+$, which is a linear subspace of $\Lie(\mathcal{D}_2)$. Define $\mathcal{L}_{-1}=\Lie(\mathcal{D}_2)^+$ with $B_{-1}=X_{-1}=0$. So we obtain a descending sequence $\mathcal{L}\supset\mathcal{L}_{-1}\supset\mathcal{L}_{0}\supset\cdots\mathcal{L}_{n}\supset\cdots$ and it also satisfies the statements stated in Lemma~\ref{ck.conditions}. Let $M=M_\ep\oplus M_\op$ be a FNICM over $\mathcal{D}_2$ and $M_n=\{v\in M\,|\,\mathcal{L}_n v=0\}$. Then there exists a minimal integer $N\ge 1$ such that $M_N\neq\{0\}$ and $M_N$ is finite-dimensional by Lemma~\ref{CK.lemma}. 
	
	Set $\mathcal{Y}=\bC(\partial-A_{-1})\oplus\mathcal{L}_0\oplus\sum_{i=0}^{d}\bC A_i$. Then $\mathcal{Y}$ is a subalgebra of $\mathcal{L}$ and $[\mathcal{Y},\mathcal{L}_n]\subset\mathcal{L}_n$ for any $n\ge 0$. Thus $M_N$ is a nonzero nontrivial finite-dimensional $\mathcal{Y}/\mathcal{L}_N$-module. Denote $\mathcal{Y}/\mathcal{L}_N$ by $\mathcal{Y}_N$. By abuse of notation, we still denote the generators of $\mathcal{Y}_N$ by $\{\partial-A_{-1},\, A_n,\, B_n,\, X_n\}$, where $0\le n \le N$.
	
	Different from $\bar{\mathcal{D}}$, besides applying super analog of Lie's theorem, to get the FNICMs over $\mathcal{D}_2$, we need another Cheng--Kac's lemma:
	\begin{lemma}\label{ck.erratum}\textup{(\cite{CK})}
		Let $\mathfrak{g}$ be a finite-dimensional Lie superalgebra and let $\mathfrak{n}$ be a solvable ideal of $\mathfrak{g}$. Let $\mathfrak{a}$ be an even subalgebra of $\mathfrak{g}$ such that $\mathfrak{n}$ is a completely reducible $\mathrm{ad}\mathfrak{a}$-module with no trivial summand. Then $\mathfrak{n}$ acts trivially in any finite-dimensional $\mathfrak{g}$-module.
	\end{lemma}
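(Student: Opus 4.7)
The plan is to exploit the three hypotheses in sequence: solvability of $\mathfrak{n}$ to invoke a super Lie theorem, the ideal property to propagate a weight functional, and the $\mathrm{ad}\mathfrak{a}$-decomposition to annihilate it.

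First, reduce to the case where $V$ is a finite-dimensional irreducible $\mathfrak{g}$-module, and apply the super analog of Lie's theorem (cf.\ Proposition~\ref{lem:lie.sup.Lie.thm}) to the restricted action of the solvable ideal $\mathfrak{n}$ on $V$: this produces a nonzero $v \in V$ and a linear functional $\ell \in \mathfrak{n}^*$ with $\ell([\mathfrak{n},\mathfrak{n}]) = \ell(\mathfrak{n}_\op) = 0$ such that $n \cdot v = \ell(n) v$ for every $n \in \mathfrak{n}_\ep$. Next, prove the invariance $\ell([\mathfrak{g},\mathfrak{n}]) = 0$ by the classical Dixmier-style argument adapted to the $\bZz$-graded setting: for $g \in \mathfrak{g}$ the translate $g \cdot v$ lies in a generalized weight space of $\mathfrak{n}_\ep$ whose character differs from $\ell$ by $\ell \circ \mathrm{ad}(g)$, and finite-dimensionality of $V$ forces this twist to vanish on $\mathfrak{n}_\ep$; combined with $\ell(\mathfrak{n}_\op) = 0$ this yields $\ell([\mathfrak{g},\mathfrak{n}]) = 0$.

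Now invoke the hypothesis on $\mathfrak{a}$: decompose $\mathfrak{n} = \bigoplus_i W_i$ into nontrivial irreducible $\mathrm{ad}\mathfrak{a}$-summands. For each $W_i$ the subspace $[\mathfrak{a}, W_i]$ is a nonzero $\mathrm{ad}\mathfrak{a}$-submodule of $W_i$ (nonzero because $W_i$ is nontrivial, so $\mathfrak{a}$ cannot annihilate it), and by irreducibility equals $W_i$. Thus $\mathfrak{n} = [\mathfrak{a}, \mathfrak{n}] \subseteq [\mathfrak{g}, \mathfrak{n}]$, giving $\ell(\mathfrak{n}) = 0$. Consequently $\mathfrak{n}$ acts by nilpotent operators on $V$; by Engel's theorem the subspace $V^\mathfrak{n} = \{v \in V : \mathfrak{n} \cdot v = 0\}$ is nonzero, and since $\mathfrak{n}$ is an ideal it is $\mathfrak{g}$-stable, so by irreducibility $V^\mathfrak{n} = V$.

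The main obstacle is the Dixmier-style invariance step, which requires a careful super version of the weight-space decomposition for $\mathfrak{n}_\ep$ together with parity tracking when odd elements of $\mathfrak{g}$ permute the weight spaces of $V$; once $\ell$ is pinned down to $0$, the rest follows from standard Engel and ideal-stability considerations.
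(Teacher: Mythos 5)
The paper offers no proof of this lemma---it is quoted verbatim (minus one crucial word, see below) from the erratum to \cite{CK}---so your proposal can only be measured against the original argument, whose outline it does follow: Lie's theorem for $\mathfrak{n}_\ep$, invariance of the resulting weight, the identity $[\mathfrak{a},\mathfrak{n}]=\mathfrak{n}$ forced by complete reducibility with no trivial summand, then nilpotence, Engel, and ideal-stability of $V^{\mathfrak{n}}$. The first genuine gap is your opening reduction to irreducible $V$: knowing that $\mathfrak{n}$ kills every composition factor only tells you that $\mathfrak{n}$ acts \emph{nilpotently} on $V$, not trivially, so the reduction is not available. In fact the statement as printed here, for an arbitrary finite-dimensional module, is false: take the purely even example $\mathfrak{g}=\mathfrak{sl}_2\ltimes\bC^2$ realized as the matrices $\left(\begin{smallmatrix} A & v\\ 0&0\end{smallmatrix}\right)$, with $\mathfrak{n}=\bC^2$ the standard module, $\mathfrak{a}=\mathfrak{sl}_2$, and $V=\bC^3$ the natural module; then $\mathfrak{n}$ is a nontrivial irreducible $\mathrm{ad}\mathfrak{a}$-module yet acts nontrivially (only nilpotently) on $V$. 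The original Cheng--Kac lemma is stated for \emph{irreducible} finite-dimensional modules; your argument proves that version, which is also the one the paper actually uses, but you should flag the missing hypothesis rather than present irreducibility as a harmless reduction.

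The second gap is that the invariance step overreaches. The Dixmier trace argument yields $\ell([x,h])=0$ because $[x,h]$ acts as an honest commutator of operators on the cyclic subspace; for $x\in\mathfrak{g}_\op$ and $h\in\mathfrak{n}_\op$ the bracket acts as an \emph{anticommutator}, whose trace need not vanish (e.g.\ $[x,y]=h$ acting as the identity on $\bC^{1|1}$ in the super Heisenberg algebra), and this is precisely why Lie's theorem and the invariance lemma fail in odd directions. So $\ell([\mathfrak{g},\mathfrak{n}])=0$ is not justified. What saves you is that you only ever need $\ell([\mathfrak{a},\mathfrak{n}])=0$ with $\mathfrak{a}$ \emph{even}: $[\mathfrak{a},\mathfrak{n}_\ep]\subseteq\mathfrak{n}_\ep$ is covered by the classical trace argument, while $[\mathfrak{a},\mathfrak{n}_\op]\subseteq\mathfrak{n}_\op$ is killed by $\ell$ by definition; you should claim only this. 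Finally, two small repairs in the last step: to conclude that $\mathfrak{n}_\ep$ acts nilpotently on all of $V$ you must apply the (corrected) invariance to \emph{every} weight occurring in the Lie flag for $\mathfrak{n}_\ep$, not just to the single eigenvector $v$, and you should note that odd elements of $\mathfrak{n}$ are then nilpotent because $y^2=\tfrac12[y,y]\in\mathfrak{n}_\ep$, so that the super version of Engel's theorem applies to give $V^{\mathfrak{n}}\neq 0$. With these corrections the argument for the irreducible case is sound.
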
 
	
	Recall that we denote $\mathcal{D}_2$ with $(a,b,Q_2(\partial,\lambda))=(1,0,0)$ by $\mathcal{HVS}$ which is called the Heisenberg--Virasoro Lie conformal superalgebra, otherwise by $\bar{\mathcal{D}}_2$. 
	
	\begin{proposition} The finite nontrivial irreducible conformal module $M$ over $\mathcal{HVS}$ must be free of rank one or rank $(1+1)$.
	\end{proposition}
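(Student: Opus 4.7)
The plan is to adapt the argument of Lemma~\ref{lem:D(1).mod.rank1} (the $\bar{\mathcal{D}}$ case), using Cheng--Kac's lemma (Lemma~\ref{CK.lemma}) together with the erratum Lemma~\ref{ck.erratum} to handle the extra complication caused by $[X_\lambda X]=2B$. First I would specialize Lemma~\ref{lem:D2} to $(a,b,Q_2)=(1,0,0)$, so that the annihilation Lie superalgebra $\Lie(\mathcal{HVS})^+$ has the simplified relations $[A_m,A_n]=(m-n)A_{m+n}$, $[A_m,B_n]=-nB_{m+n}$, $[A_m,X_s]=-sX_{m+s}$, $[X_r,X_s]=2B_{r+s}$, with $d=1$. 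Taking the standard filtration $\mathcal{L}_n=\sum_{i\ge n}(\bC A_{i+1}\oplus\bC B_i\oplus\bC X_i)$ and invoking Lemma~\ref{CK.lemma} plus Proposition~\ref{prop:conformal}, one obtains a minimal $N\ge 1$ with $M_N=\{v\in M:\mathcal{L}_Nv=0\}$ non-zero and finite-dimensional, so that $M_N$ is a finite-dimensional module over $\mathcal{Y}_N:=\mathcal{Y}/\mathcal{L}_N$, with $D=\partial-A_{-1}$ central.

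The aim is then to show $\dim M_N\le 2$. Unlike the $\bar{\mathcal{D}}$ case, $\mathcal{Y}_N/\bC D$ contains $\mathfrak{sl}_2=\langle A_{-1},A_0,A_1\rangle$ produced by $[A_{-1},A_1]=-2A_0$, and the Clifford-type relation $X_0^2=B_0$ entangles the $X$-sector with the Heisenberg centre. I would first apply Lemma~\ref{ck.erratum} with an even subalgebra $\mathfrak{a}$ containing $A_0$ (a grading element with integer eigenvalues $-i,-n,-s$ on all basis vectors) and a solvable ideal $\mathfrak{n}\subset\mathcal{Y}_N$ built from the strictly-positive-$A_0$-weight generators of the Heisenberg sector, enlarged to close under $\ad\,\mathcal{Y}_N$; provided $\mathfrak{n}$ is arranged to carry only non-zero $A_0$-eigenvalues, Lemma~\ref{ck.erratum} forces $\mathfrak{n}$ to act trivially on $M_N$. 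The residual action then factors through a small quotient containing $\mathfrak{sl}_2$ together with $\bC B_0\oplus\bC X_0$ (and the central $D$). Combining the conformality constraint $A_{(j)}v=0$ for $j\gg 0$ (which limits the $\mathfrak{sl}_2$-representation types that can appear on $M_N$) with Proposition~\ref{lem:lie.sup.Lie.thm} applied to the solvable part, one extracts a highest-weight-type vector $v\in M_N$ which is killed by the raising operators and is a joint eigenvector of $A_0$ and $B_0$. Then $\cp v$ if $X_0v=0$, or $\cp v\oplus\cp X_0v$ otherwise, is a conformal submodule of rank $1$ or $(1+1)$; Lemma~\ref{CK.lemma} gives $\cp M_N=\cp\otimes M_N$, and irreducibility of $M$ promotes this submodule to all of $M$.

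The principal obstacle is the construction of $\mathfrak{n}$ with all three properties demanded by Lemma~\ref{ck.erratum}, together with ruling out non-trivial $\mathfrak{sl}_2$-action on the residual quotient. The naive ``strictly positive index'' subspace is not $\ad\,A_{-1}$-stable because $[A_{-1},A_1]=-2A_0$ and $[A_{-1},B_1]=-B_0$ both land in the zero-weight line, and including $A_1$ into $\mathfrak{n}$ forces via closure the inclusion of the entire $\mathfrak{sl}_2$, destroying solvability. One resolution is to take $\mathfrak{a}=\mathfrak{sl}_2$ itself and use Weyl's complete reducibility to identify the unique trivial $\mathfrak{sl}_2$-summand $\bC B_0\oplus\bC X_0$ of the Heisenberg sector, so that $\mathfrak{n}$ becomes its $\mathfrak{sl}_2$-invariant complement; an alternative is a direct argument following the $\bar{\mathcal{D}}$ template, analyzing ascending chains of annihilators. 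Either way, the delicate book-keeping of how the zero-$A_0$-weight line $\bC B_0\oplus\bC X_0$ interacts with the rest of the algebra is the crux; once it is pinned down, the super-Lie and Cheng--Kac steps are routine.
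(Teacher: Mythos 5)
There is a genuine gap, and it stems from a misreading of the structure of $\mathcal{Y}_N$. You identify as ``the principal obstacle'' the presence of $\mathfrak{sl}_2=\langle A_{-1},A_0,A_1\rangle$ inside $\mathcal{Y}_N/\bC D$, and you then spend the bulk of the argument on machinery (Weyl complete reducibility, highest-weight vectors, constraining $\mathfrak{sl}_2$-types via conformality) designed to neutralize it. But $A_{-1}$ is \emph{not} an element of $\mathcal{Y}=\bC(\partial-A_{-1})\oplus\mathcal{L}_0\oplus\sum_{i=0}^{d}\bC A_i$; only the combination $\partial-A_{-1}$ is, and a direct computation using \eqref{e.ann.D(1)}-type relations shows $[\partial-A_{-1},A_m]=[\partial-A_{-1},B_n]=[\partial-A_{-1},X_s]=0$, i.e.\ $\partial-A_{-1}$ is \emph{central} in $\mathcal{Y}_N$. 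So no copy of $\mathfrak{sl}_2$ arises, the obstacle is a phantom, and neither of your two proposed resolutions is carried out (the first rests on the false premise, the second is left as ``a direct argument \dots analyzing ascending chains of annihilators''). Consequently the crux --- bounding $\dim M_N$ by $2$ --- is never actually established in your write-up.

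The correct and much simpler route, which is what the paper does, is: decompose $\mathcal{Y}_N=\Span_\bC\{\partial-A_{-1},A_0,B_0,X_0\}+\hat{\mathcal{Y}}_N$, where $\hat{\mathcal{Y}}_N$ is spanned by all generators of index $\ge 1$. This $\hat{\mathcal{Y}}_N$ is a nilpotent ideal on which $A_0$ acts diagonally with eigenvalues $-n$, $n\ge 1$ (note these are strictly \emph{negative}, not positive as in your description, though that is immaterial), hence it is a completely reducible $\ad\bC A_0$-module with no trivial summand, and Lemma~\ref{ck.erratum} applies verbatim with $\mathfrak{a}=\bC A_0$ and $\mathfrak{n}=\hat{\mathcal{Y}}_N$ to kill its action on $M_N$. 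The residual algebra $\Span_\bC\{\partial-A_{-1},A_0,B_0,X_0\}$ has abelian even part, so one finds a joint eigenvector $v$, and the Clifford relation $X_0^2v=B_0v$ forces $M_N=\bC v+\bC X_0v$ with $|v|\neq|X_0v|$; Lemma~\ref{CK.lemma} and irreducibility then give $M=\cp M_N$ of rank one or $(1+1)$. Your outer scaffolding (filtration, minimal $N$, erratum lemma, finishing via $\cp M_N$) is the right one, but the central step needs to be replaced by this argument rather than an $\mathfrak{sl}_2$-analysis.
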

	\begin{proof}
		We firstly determine the $\mathcal{Y}_N$-module $M_N$. If $(a,b,Q_2(\partial,\lambda))=(1,0,0)$, then we have the following nontrivial relations on $\mathcal{Y}_N$:
		\begin{equation*}
			[A_m,A_n]=(m-n)A_{m+n},\ [A_m,B_n]=-nB_{m+n},\ [A_m,X_n]=-nX_{m+n},\ [X_m,X_n]=2B_{m+n},
		\end{equation*}
		where $-1\le m,n\le N$. Notice that $\partial-A_{-1}$ is a central element. Consider the following decomposition of $\mathcal{Y}_N$:
		\begin{equation*}
			\mathcal{Y}_N=\Span_\bC\{\partial-A_{-1},\, A_0,\, B_0,\, X_0\}+\hat{\mathcal{Y}}_N,
		\end{equation*}
		where $\hat{\mathcal{Y}}_N=\mathcal{Y}_N\backslash\Span_\bC\{\partial-A_{-1},\, A_0,\, B_0,\, X_0\}$. Obviously, $\hat{\mathcal{Y}}_N$ is a nilpotent ideal of $\mathcal{Y}_N$. Consider the action of $A_0$ on $\hat{\mathcal{Y}}_N$:
		\begin{equation*}
			[A_0,A_n]=-nA_{n},\ \ [A_0,B_n]=-nB_{n},\ \ [A_0,X_n]=-nX_{n},
		\end{equation*}
		where $1\le n\le N$. It implies that $\hat{\mathcal{Y}}_N$ is a completely reducible $\bC A_0$-module with no trivial summand. By Lemma~\ref{ck.erratum}, $\hat{\mathcal{Y}}_N$ acts trivially on $M_N$. In other words, $M_N$ is determined by $\Span_\bC\{\partial-A_{-1},\, A_0,\, B_0,\, X_0\}$. Notice that $\Span_\bC\{\partial-A_{-1},\, A_0,\, B_0\}$ is the even subalgebra and its module is of dimension one (its conformal analog is just Proposition~\ref{rank2.lie.con.alg.mod}(4), one may refer to \cite[Theorem~3.10]{LHW} for more details). Hence, we can take nonzero $v$ in this one-dimensional module. By $2X^2_0v=[X_0,X_0]v=2B_0v$, we immediately obtain that  $M_N=\bC v+\bC X_0 v$ and $|v|\neq |X_0 v|$. Therefore, $M=\cp M_N$ and $\rank\, M\le 2$ by the irreducibility of $M$ and Lemma~\ref{CK.lemma}. More explicitly, $M$ is of rank one or rank $(1+1)$.
	\end{proof}
	
	\begin{proposition} The finite nontrivial irreducible conformal module $M$ over $\bar{\mathcal{D}}_2$ must be free of rank one.
	\end{proposition}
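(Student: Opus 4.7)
The plan is to mimic the Cheng--Kac argument used in the preceding proof for $\mathcal{HVS}$, but to replace its two-dimensional output $M_N=\bC v+\bC X_0 v$ by a one-dimensional one, thereby forcing $M$ to be free of rank one. The essential new ingredient is that, under the hypothesis $(a,b,Q_2)\neq(1,0,0)$, the finite-dimensional quotient $\mathcal{Y}_N=\mathcal{Y}/\mathcal{L}_N$ satisfies the hypothesis of the super analog of Lie's theorem (Lemma~\ref{lem:lie.sup.thm.app}(1)). First I would invoke Lemma~\ref{lem:D2}, the filtration $\mathcal{L}_n$ introduced just before this proposition, and Lemma~\ref{CK.lemma} to produce a minimal $N\geq 1$ for which $M_N=\{v\in M\mid \mathcal{L}_N v=0\}$ is nonzero and finite-dimensional over $\mathcal{Y}_N$.

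Next I would establish two structural facts about $\mathcal{Y}_N$: that it is solvable, and that $[(\mathcal{Y}_N)_\op,(\mathcal{Y}_N)_\op]\subseteq [(\mathcal{Y}_N)_\ep,(\mathcal{Y}_N)_\ep]$. Solvability holds because the $A_m$'s generate a Virasoro-type subalgebra whose derived series in the finite quotient at least doubles the minimum index at each step and so terminates, while the spans of the $B_n$'s and $X_s$'s form a solvable ideal via $[X_r,X_s]=2B_{r+s}$. For the commutator containment, the left-hand side equals $\Span_\bC\{B_n:0\leq n\leq N-1\}$, so the content is to realize each $B_n$ as an even-even bracket. I split by cases according to Condition~\textbf{C1}: for $b\neq 0$, the direct computation $[\partial-A_{-1},B_n]=-bB_n$ immediately places $B_n$ in $[(\mathcal{Y}_N)_\ep,(\mathcal{Y}_N)_\ep]$; for $b=0$ with $a\neq 1$, the bracket $[A_m,B_0]=(a-1)(m+1)B_m$ supplies each $B_m$; and for $(a,b)=(1,0)$ with $Q_2\neq 0$, necessarily $Q_2(\partial,\lambda)=c(\partial+2\lambda)$ with $c\neq 0$, so the $Q_2$-contribution to $[A_m,A_n]$ renders $[\partial-A_{-1},A_n]=c(n+1)B_n$ a nonzero multiple of $B_n$.

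Applying Lemma~\ref{lem:lie.sup.thm.app}(1), every finite-dimensional irreducible $\mathcal{Y}_N$-module is one-dimensional. Picking such a simple submodule $\bC v\subseteq M_N$, every generator of $\bar{\mathcal{D}}_2$ acts on $v$ as a polynomial in $\partial$ with scalar coefficients, with the $(\partial+\eta)v$ contribution recovered from $\partial-A_{-1}$. Hence the $\cp$-span $\cp v$ is a conformal $\bar{\mathcal{D}}_2$-submodule of $M$, and irreducibility together with Cheng--Kac forces $M=\cp v$, free of rank one. The explicit action is then pinned down exactly as in the proof of Lemma~\ref{lem:D(1).mod.rank1}: since $X_s v\in M_N=\bC v$ carries opposite parity to $v$, parity forces $X_s v=0$ and hence $X_\lambda v=0$; the relation $[X_\lambda X]_{\lambda+\mu}v=2B_{\lambda+\mu}v=0$ then yields $B_\lambda v=0$; and $[A_0,A_n]\equiv -nA_n\pmod{B}$ acting as zero on $v$ forces $A_n v=0$ for $n\geq 1$. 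Thus $A_\lambda v=(\partial+\Delta\lambda+\eta)v$ with $\Delta\neq 0$ required by Proposition~\ref{vir.mod}, giving $M\cong M_{A,\Delta,\eta}$.

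The main obstacle I foresee is the case-by-case verification of the commutator containment across the parameter configurations in Condition~\textbf{C1}: each case requires identifying a specific even-even bracket that produces $B_n$, and the argument must cover uniformly both the generic $a\neq 1$ situations (where $[A,B]$ suffices) and the rigid $(a,b)=(1,0)$ situation (where only the $Q_2$-deformation of $[A,A]$ supplies the needed $B_n$). It is precisely the exclusion $(a,b,Q_2)\neq(1,0,0)$ that enables this containment, which is the structural reason $\bar{\mathcal{D}}_2$ admits only rank-one irreducible conformal modules whereas $\mathcal{HVS}$ also admits rank $(1+1)$ ones.
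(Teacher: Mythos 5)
Your proof is correct, but it reaches the key one-dimensionality statement by a genuinely different route than the paper. The paper passes to the image $R$ of $\mathcal{Y}_N$ in $\End M_N$, singles out the ideal $\tilde{R}=\bC(\partial-A_{-1})\oplus\sum_{i\ge1}\bC A_i\oplus\sum_{i\ge0}(\bC B_i\oplus\bC X_i)$, invokes Lemma~\ref{lem:lie.sup.thm.app}(2) for nilpotent Lie superalgebras to kill the odd part, and then cites \cite[Theorem~3.10]{LHW} to conclude that $M_N=\bC A_0V$ is one-dimensional. You instead apply Lemma~\ref{lem:lie.sup.thm.app}(1) directly to $\mathcal{Y}_N$, which requires solvability together with $[(\mathcal{Y}_N)_\op,(\mathcal{Y}_N)_\op]\subseteq[(\mathcal{Y}_N)_\ep,(\mathcal{Y}_N)_\ep]$; your three-case verification ($[\partial-A_{-1},B_n]=-bB_n$ when $b\neq0$; $[A_m,B_0]=(a-1)(m+1)B_m$ when $b=0$, $a\neq1$; $[\partial-A_{-1},A_n]=c(n+1)B_n$ when $(a,b)=(1,0)$ and $Q_2=c(\partial+2\lambda)\neq0$) is exactly where the hypothesis $(a,b,Q_2)\neq(1,0,0)$ enters, and all three computations check out against the brackets in Lemma~\ref{lem:D2}. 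What your route buys: it is self-contained (no appeal to \cite{LHW}), it isolates the structural reason the excluded case $\mathcal{HVS}$ admits rank $(1+1)$ modules, and it avoids leaning on the nilpotency of $\tilde{R}$, which is delicate when $b\neq0$ precisely because of the relation $[\partial-A_{-1},B_n]=-bB_n$ that you exploit; what it costs is the case analysis over Condition \textbf{C1}. One small imprecision: after applying Lemma~\ref{lem:lie.sup.thm.app}(1) you should take $\bC v$ to be an irreducible graded $\mathcal{Y}_N$-submodule of $M_N$ rather than asserting $M_N=\bC v$; since $X_sv$ then lies in $\bC v$ with opposite parity it still vanishes, and the remainder of your argument ($B_\lambda v=0$ from $[X_\lambda X]=2B$, then $A_nv=0$ for $n\ge1$, then $M=\cp v$ by irreducibility and Lemma~\ref{CK.lemma}) goes through unchanged.
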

	\begin{proof}
		Now we have $(a,b,Q_2(\partial,\lambda))\neq(1,0,0)$. Let $R=R_\ep\oplus R_\op$ be the image of $\mathcal{Y}_N$ in $\End M_N$, which is obviously finite-dimensional. By abuse of notation, we still use  $\{\partial-A_{-1},\, A_n,\, B_n,\, X_n\}$ to denote its image in $R$, where $0\le n \le N$. Moreover, we have the Lie superalgebra $\tilde{R}=\bC(\partial-A_{-1})\oplus\sum_{i=1}^N \bC A_i\oplus\sum_{i=0}^N(\bC B_i\oplus\bC X_i)$, which is a nilpotent ideal of $R$. By Lemma~\ref{lem:lie.sup.thm.app}(2), there exists some nonzero $v\in M_N$ and a linear function $\ell:\,\tilde{R}\rightarrow\bC$ satisfying $xv=\ell(x)v$ for any $x\in\tilde{R}$, where $X_iv=0$ for all $i\ge 0$. We denote this one-dimensional module of the subalgebra $\tilde{R}$ by $V=\bC v$. In other words, the odd part of $R$ acts trivially on $V$. Then $M_N=\bC A_0 V$ is also of dimension one by \cite[Theorem~3.10]{LHW}.
		
		Finally, $M=\cp M_N$ is of rank one by the irreducibility of $M$ and Lemma~\ref{CK.lemma}.
	\end{proof}
	
	Let $M$ be the nontrivial rank one conformal module over $\mathcal{D}_2$. As the nontrivial conformal module over the even part, it is of the form \eqref{H.D.mod}. For any $v\in M$, we have $X_\lambda v=0$ since $X_\lambda$ change the parity. Besides, we can get $B_\lambda v=0$ from $0=[X_\lambda X]_{\lambda+\mu}v=2B_{\lambda+\mu}v$. So the $\lambda$-actions of $M$ are given as follows:
	\begin{equation}\label{D2.mod.rank1}
		A_\lambda v=(\partial+\Delta\lambda+\eta)v,\ \ B_\lambda v=X_\lambda v=0,
	\end{equation}
	for some $\Delta,\eta\in\bC$. It is irreducible if and only if $\Delta\neq 0$, which is given by the irreducibility of $\Vir$-module.
	
	Therefore, in the following, we need to discuss the nontrivial conformal modules of rank $(1+1)$ over $\mathcal{HVS}$. Let $(a,b,Q_2(\partial,\lambda))=(1,0,0)$, then the relations of $\mathcal{D}_2$ turn into
	\begin{align*}
		[A_\lambda A]=(\partial+2\lambda)A,\ [A_\lambda B]=&(\partial+\lambda)B,\ [A_\lambda X]=(\partial+\lambda)X,\\
		[X_\lambda X]=2B,\ [B_\lambda &B]=[B_\lambda X]=0.
	\end{align*}
	Its free conformal modules of rank $(1+1)$ were constructed in \cite{WY}.
	\begin{proposition}\label{HVS.mod}\textup{(\cite{WY})}
		Up to parity change, any nontrivial free conformal module of rank $(1+1)$ over the Heisenberg--Virasoro Lie conformal superalgebra $\mathcal{HVS}$ is isomorphic to one of the followings\textup{:}
		
		\subno{1} $M_{\Delta,\zeta,c,\epsilon}=\cp v_\ep\oplus\cp v_\op$ with $\lambda$-actions\textup{:}
		\begin{align*}
			\begin{cases}
				A_\lambda v_\ep=(\partial+\Delta\lambda+\zeta)v_\ep,\ A_\lambda v_\op=(\partial+\Delta\lambda+\zeta)v_\op,\\
				B_\lambda v_\ep=c\epsilon v_\ep,\ B_\lambda v_\op=c\epsilon v_\op,\\
				X_\lambda v_\ep=cv_\op,\phantom{c}\ X_\lambda v_\op=\epsilon v_\ep,
			\end{cases}
		\end{align*}
		where $\Delta,\zeta\in\bC$, $c,\epsilon\in\bCx$.
		
		\subno{2} $M_{\Delta_0,\Delta_1,\eta}^{(i)}=\cp v_\ep\oplus\cp v_\op\ (i=1,2,\dots,6)$ with $\lambda$-actions\textup{:}
		\begin{align*}
			\begin{cases}
				A_\lambda v_\ep=(\partial+\Delta_0\lambda+\eta)v_\ep,\ A_\lambda v_\op=(\partial+\Delta_1\lambda+\eta)v_\op,\\
				B_\lambda v_\ep=B_\lambda v_\op=0,\\
				X_\lambda v_\ep=h(\partial,\lambda)v_\op,\ X_\lambda v_\op=0,
			\end{cases}
		\end{align*}
		satisfying the following relations\textup{:}
		\[
		\begin{tabular}{cccc}
			\toprule
			\phantom{12}\text{conformal module}\phantom{12} & \phantom{1234}$\Delta_0$\phantom{1234} & \phantom{1234}$\Delta_1$\phantom{1234} & \phantom{12}$h(\partial,\lambda)$\phantom{12}\\
			\midrule
			$M_{\Delta_0,\Delta_1,\eta}^{(1)}$ & $\Delta_0$ & $\Delta_0$ & $1$\\
			$M_{\Delta_0,\Delta_1,\eta}^{(2)}$ & $\Delta_0$ & $\Delta_0-1$ & $\lambda$\\
			$M_{\Delta_0,\Delta_1,\eta}^{(3)}$ & $\Delta_0$ & $\Delta_0-2$ & $\lambda(\partial-\Delta_1\lambda+\eta)$\\
			$M_{\Delta_0,\Delta_1,\eta}^{(4)}$ & $1$ & $0$ & $\partial+k\lambda+\eta,k\in\bC$\\
			$M_{\Delta_0,\Delta_1,\eta}^{(5)}$ & $1$ & $-2$ & $\lambda(\partial+\lambda+\eta)(\partial+2\lambda+\eta)$\\
			$M_{\Delta_0,\Delta_1,\eta}^{(6)}$ & $3$ & $0$ & $\lambda(\partial+\eta)(\partial-\lambda+\eta)$\\
			\bottomrule
		\end{tabular}
		\]
		where $\Delta_0,\Delta_1,\eta\in\bC$ and $h(\partial,\lambda)\in\bC[\partial,\lambda]$.
	\end{proposition}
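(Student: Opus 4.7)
The plan is to determine the full module structure from the axioms \eqref{conmod.1}, \eqref{conmod.2} by first fixing the even action and then analysing how $X$ couples the two components. Writing $M=\cp v_\ep\oplus\cp v_\op$ and using that $A,B$ preserve parity while $X$ reverses it, I would set
\begin{equation*}
A_\lambda v_\theta=f_\theta(\partial,\lambda)v_\theta,\ \ B_\lambda v_\theta=g_\theta(\partial,\lambda)v_\theta,\ \ X_\lambda v_\ep=h(\partial,\lambda)v_\op,\ \ X_\lambda v_\op=k(\partial,\lambda)v_\ep,
\end{equation*}
for $\theta\in\{\ep,\op\}$. Because $[A_\lambda A]=(\partial+2\lambda)A$ makes each $\cp v_\theta$ a free rank-one Virasoro module, Proposition~\ref{vir.mod} immediately gives $f_\theta(\partial,\lambda)=\partial+\Delta_\theta\lambda+\eta_\theta$.

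Next I would impose $[A_\lambda B]=(\partial+\lambda)B$ via \eqref{conmod.2}: a leading-coefficient-in-$\partial$ argument shows that each $g_\theta$ must reduce to a constant $c_\theta\in\bC$. The relation $[A_\lambda X]=(\partial+\lambda)X$ then yields the functional equation
\begin{equation*}
-\mu\, h(\partial,\lambda+\mu)=h(\partial+\lambda,\mu)(\partial+\Delta_1\lambda+\eta_1)-h(\partial,\mu)(\partial+\mu+\Delta_0\lambda+\eta_0),
\end{equation*}
together with its analogue for $k$ (indices swapped). Specializing $\lambda=0$ forces $\eta_0=\eta_1=:\eta$ whenever $h\neq 0$, and similarly when $k\neq 0$.

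With the even action now fixed, I would split on whether $B$ acts trivially. The key bridge is $[X_\lambda X]=2B$, which on $v_\ep$ reads
\begin{equation*}
h(\partial+\lambda,\mu)k(\partial,\lambda)+h(\partial+\mu,\lambda)k(\partial,\mu)=2c_\ep,
\end{equation*}
with the analogous identity on $v_\op$. If $B\neq 0$, a $\partial$-degree comparison in this identity forces both $h$ and $k$ to be nonzero constants $c$ and $\epsilon$ with $c\epsilon=c_\ep=c_\op$; reinserting them into the $[A_\lambda X]$-equation collapses it to $\Delta_0=\Delta_1$, yielding exactly the family $M_{\Delta,\zeta,c,\epsilon}$. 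If $B=0$, specializing $\lambda=\mu$ above gives $h(\partial+\lambda,\lambda)k(\partial,\lambda)=0$, so up to the parity swap $v_\ep\leftrightarrow v_\op$ we may take $k\equiv 0$, and $[B_\lambda X]=0$ becomes automatic.

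The main obstacle is the residual case $B=0$, $k=0$, $h\neq 0$, where the classification reduces to enumerating polynomial solutions $h(\partial,\lambda)$ of the $[A_\lambda X]$ functional equation above. My plan here is a two-step degree analysis: first, comparing the highest $\partial$-coefficients on both sides ties $\deg_\partial h$ to $\Delta_0-\Delta_1$ and cuts down to a finite list of admissible pairs $(\Delta_0,\Delta_1)$; second, for each admissible pair I would plug the ansatz $h=\sum h_{ij}\partial^i\lambda^j$ (with the bounded degrees) into the equation and solve the resulting linear system for the coefficients $h_{ij}$. This bookkeeping should recover exactly the six families $M^{(1)}_{\Delta_0,\Delta_1,\eta},\ldots,M^{(6)}_{\Delta_0,\Delta_1,\eta}$; disentangling the sporadic higher-degree solutions (cases $M^{(3)}$--$M^{(6)}$) and confirming that no extraneous branches survive is expected to be the most delicate combinatorial step.
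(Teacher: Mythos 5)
First, a point of reference: the paper does not prove Proposition~\ref{HVS.mod} at all --- it is quoted from \cite{WY} --- so there is no internal proof to compare against, and your attempt has to be judged as a standalone argument. The skeleton you set up is the natural one and your reductions are correct as far as they go: since $M_\ep$ and $M_\op$ are each free of rank one over $\cp$, the even generators necessarily act diagonally; each $\cp v_\theta$ is a rank-one conformal module over the even part $\mathcal{HV}$, so Proposition~\ref{rank2.lie.con.alg.mod}(4) pins down $f_\theta$ and forces $g_\theta$ to be a constant; and the functional equations you extract from $[A_\lambda X]=(\partial+\lambda)X$ and $[X_\lambda X]=2B$ via \eqref{conmod.2} are the right ones. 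Specializing $\lambda=\mu$ in the $[X_\lambda X]$ identity gives $h(\partial+\lambda,\lambda)k(\partial,\lambda)=c_\ep$, which cleanly splits the analysis into the $B\neq 0$ branch (both $h,k$ nonzero constants, hence $M_{\Delta,\zeta,c,\epsilon}$, with $[B_\lambda X]=0$ supplying $c_\ep=c_\op$) and the $B=0$ branch with, say, $k=0$ after a parity swap.

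The genuine gap is that the proposal stops exactly where the content of part (2) begins. The assertion that the nonzero polynomial solutions of $-\mu\,h(\partial,\lambda+\mu)=h(\partial+\lambda,\mu)(\partial+\Delta_1\lambda+\eta)-h(\partial,\mu)(\partial+\mu+\Delta_0\lambda+\eta)$ are exhausted by the six listed families is the whole theorem, and ``this bookkeeping should recover exactly the six families'' is a promissory note rather than an argument. Moreover the announced first step is already shaky: writing $h(\partial,\mu)=\sum_i h_i(\mu)\partial^i$ with top degree $p$, the comparison of $\partial^p$-coefficients yields $-\mu h_p(\lambda+\mu)=h_p(\mu)\bigl((p+\Delta_1-\Delta_0)\lambda-\mu\bigr)$, which only gives $h_p(0)(p+\Delta_1-\Delta_0)=0$; this does not tie $\deg_\partial h$ to $\Delta_0-\Delta_1$ when $h_p(0)=0$ (as happens for $M^{(2)}$, $M^{(3)}$, $M^{(5)}$, $M^{(6)}$, where $\lambda\mid h$), and the correct invariant is the total degree of $h$. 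One still has to prove a priori that this degree is bounded by $3$ and then solve the linear systems; this is essentially the classification of $\Vir$-intertwiners of low degree carried out in \cite{CKW}, and it must be executed or cited, not assumed. Two degenerate cases are also left untreated: (i) you invoke Proposition~\ref{vir.mod} as if $A$ acts nontrivially on each $\cp v_\theta$, so the possibility $f_\theta=0$ on one component has to be excluded separately; (ii) in the $B=0$ branch your analysis permits $h=k=0$ with $A$ still acting nontrivially, a nontrivial rank $(1+1)$ module absent from the stated list, so you must either rule it out or make explicit the (decomposability) convention under which it is discarded.
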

	
	\begin{remark}
		\subno{1} For $M_{\Delta_0,\Delta_1,\eta}^{(i)}$ defined in Proposition~\ref{HVS.mod}, it is easy to check that $\cp v_\op$ is its submodule of rank one. Then the quotient $M/\cp v_\op$ is of the form in \eqref{D2.mod.rank1}. 
		
		\subno{2} The irreducibility of $M_{\Delta,\zeta,c,\epsilon}$ can be given by the irreducibility of Heisenberg--Virasoro conformal module. Employing the notations in Example~\ref{HV}, then the finite nontrivial irreducible conformal $\mathcal{HV}$-module is of the following form (cf. \cite{WY17}):
		\begin{equation*}
			V_{\Delta,\eta,\omega}=\cp v,\ \ L_\lambda v=(\partial+\Delta\lambda+\eta)v,\ \ H_\lambda v=\omega v,
		\end{equation*}
		where $\Delta,\eta,\omega\in\bC$ and $(\Delta,\omega)\neq (0,0)$. For $M_{\Delta,\zeta,c,\epsilon}$, its restriction to its even subalgebra is the $\mathcal{HV}$-module. Notice that $c\epsilon\neq 0$, so we can deduce that it is irreducible. 
	\end{remark}
	
	By the above propositions and remark, we get the FNICMs over the Lie conformal superalgebra $\mathcal{D}_2$ listed in Theorem~\ref{thm:FNICM}. The case of $\mathcal{D}_3$ can be discussed similarly. We only show the conclusions.
	\begin{proposition}
		The finite nontrivial irreducible conformal module $M$ over $\mathcal{D}_3$ must be free of rank one. More explicitly, 
		\begin{align*}
			M=\cp v,\ \ A_\lambda v=(\partial+\Delta\lambda+\eta)v,\ \ B_\lambda v=X_\lambda v=0,
		\end{align*}
		where $\Delta,\eta\in\bC$, and it is irreducible if and only if $\Delta\neq 0$.
	\end{proposition}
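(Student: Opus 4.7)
The plan is to follow verbatim the template developed for $\bar{\mathcal{D}}_2$ in the previous subsection, adapting the annihilation-algebra calculation to the new $\lambda$-brackets of $\mathcal{D}_3$. First I would compute $\Lie(\mathcal{D}_3)^+$ in the basis $\{A_m,B_n,X_s : m\ge -1,\ n,s\ge 0\}$; the crucial difference from $\mathcal{D}_2$ is that $[X_\lambda X]=(\partial+b)B$ produces a super-bracket of the form $[X_r,X_s]=bB_{r+s+1}-(r+s)B_{r+s}$ (landing in the abelian ideal spanned by the $B_n$'s), while the remaining brackets are those of Lemma~\ref{lem:D2} specialised to $a=0$, $\alpha=1$, $\beta=b/2$.

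Next I would introduce the descending filtration $\mathcal{L}_n=\sum_{i\ge n}(\bC A_{i+d}\oplus\bC B_i\oplus\bC X_i)$ of the extended annihilation algebra $\mathcal{L}=\Lie(\mathcal{D}_3)^e$, where $d$ is the total degree of $Q_3(\partial,\lambda)$ (or $d=1$ if $Q_3$ is constant), verify the Cheng--Kac conditions $[\mathcal{L}_m,\mathcal{L}_n]\subseteq\mathcal{L}_{m+n}$ and $[\partial,\mathcal{L}_n]=\mathcal{L}_{n-1}$ as in Lemma~\ref{ck.conditions}, and invoke Lemma~\ref{CK.lemma} to produce the minimal $N\ge 1$ for which $M_N=\{v\in M : \mathcal{L}_N v=0\}$ is nonzero and finite-dimensional. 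The subalgebra $\mathcal{Y}=\bC(\partial-A_{-1})\oplus\mathcal{L}_0\oplus\sum_{i=0}^{d-1}\bC A_i$ normalises $\mathcal{L}_N$, so $M_N$ becomes a finite-dimensional module over the solvable Lie superalgebra $\mathcal{Y}/\mathcal{L}_N$, inside which sits the nilpotent ideal $\tilde R$ spanned by $\partial-A_{-1}$, $A_1,\ldots,A_N$, $B_0,\ldots,B_N$ and $X_0,\ldots,X_N$.

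Applying Lemma~\ref{lem:lie.sup.thm.app}(2) to $\tilde R$ yields a nonzero $v\in M_N$ on which $X_i$ acts as zero for all $i\ge 0$ and on which $(\partial-A_{-1})$, $A_i$ ($i\ge 1$) and $B_i$ ($i\ge 0$) act by scalars. Arguing as in the $\bar{\mathcal{D}}_2$ proof, $M_N=\bC A_0 v$ is one-dimensional, and the irreducibility of $M$ together with Lemma~\ref{CK.lemma} forces $M=\cp v$ to be free of rank one. Then $X_\lambda v$ is automatically zero since it would have to lie in the (nonexistent) opposite-parity component, and Proposition~\ref{rank2.lie.con.alg.mod}(4) applied to $R_\ep$ (of type D with $a=0$, hence $(a,b,Q_3)\neq(1,0,0)$) delivers $B_\lambda v=0$ and $A_\lambda v=(\partial+\Delta\lambda+\eta)v$. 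The irreducibility criterion $\Delta\neq 0$ then drops out of Proposition~\ref{vir.mod}.

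The main obstacle will be precisely the step where one forces the odd generators $X_0,\ldots,X_N$ to annihilate $v$: recall that for $\mathcal{HVS}$ this failed and produced genuinely rank $(1+1)$ modules, because the parameter combination $(a,b,Q_2)=(1,0,0)$ turned $\hat{\mathcal{Y}}_N$ into a completely reducible $\bC A_0$-module rather than a properly nilpotent one. For $\mathcal{D}_3$ the parameter $a=0$ and the nontrivial $[X_\lambda X]=(\partial+b)B$ contribution conspire to keep $\tilde R$ genuinely nilpotent, so Lemma~\ref{lem:lie.sup.thm.app}(2) rather than Lemma~\ref{ck.erratum} is the correct tool; carefully documenting this dichotomy — and ruling out any residual nontrivial $X_0$-action coming from the $(\partial+b)B$ term — is the delicate piece of bookkeeping to carry out.
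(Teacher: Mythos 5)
Your overall strategy is exactly the one the paper intends (the paper itself disposes of $\mathcal{D}_3$ with the remark that it can be discussed similarly to $\mathcal{D}_2$), and your endgame is correct: on a rank-one module $X_\lambda v=0$ by parity, Proposition~\ref{rank2.lie.con.alg.mod}(4) with $(a,b,Q_3(\partial,\lambda))=(0,b,Q_3)\neq(1,0,0)$ gives $B_\lambda v=0$ and $A_\lambda v=(\partial+\Delta\lambda+\eta)v$, and irreducibility if and only if $\Delta\neq 0$ follows from the Virasoro module. However, your annihilation-algebra computation contains an index slip that conceals the one genuinely new point in the $\mathcal{D}_3$ case. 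With the paper's conventions $B_n=B_{(n)}$, $X_s=X_{(s)}$ (only the $A$'s are reindexed, by $A_m=A_{(m+1)}$), the bracket coming from $[X_\lambda X]=(\partial+b)B$ is
\begin{equation*}
[X_r,X_s]=\big((\partial+b)B\big)_{(r+s)}=bB_{r+s}-(r+s)B_{r+s-1},
\end{equation*}
not $bB_{r+s+1}-(r+s)B_{r+s}$ as you wrote; you have carried over the $+1$ shift that applies only to the $A$'s.

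This matters because with the correct bracket the unmodified filtration $\mathcal{L}_n=\sum_{i\ge n}(\bC A_{i+d}\oplus\bC B_i\oplus\bC X_i)$ fails the condition $[\mathcal{L}_m,\mathcal{L}_n]\subseteq\mathcal{L}_{m+n}$ of Lemma~\ref{ck.conditions}: for instance $[X_0,X_N]=bB_N-NB_{N-1}\notin\mathcal{L}_N$, so $\mathcal{L}_N$ is not stable under $\mathcal{Y}$ and $M_N$ is not a priori a $\mathcal{Y}/\mathcal{L}_N$-module, which is the hinge of the whole argument. The ``verification'' step you propose would therefore fail as stated. The repair is small but must be made explicit: shift the odd generators down one step, setting for example $\mathcal{L}_n=\sum_{i\ge n}\left(\bC A_{i+d}\oplus\bC B_i\oplus\bC X_{i+1}\right)$. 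One checks that $[\partial,\mathcal{L}_n]=\mathcal{L}_{n-1}$ still holds, and now $[X_{r+1},X_{s+1}]=bB_{r+s+2}-(r+s+2)B_{r+s+1}$ lies in $\mathcal{L}_{r+s+1}\subseteq\mathcal{L}_{m+n}$ whenever $r\ge m$, $s\ge n$, while the remaining brackets cause no trouble. After this adjustment Lemma~\ref{CK.lemma}, the solvability of $\mathcal{Y}/\mathcal{L}_N$, and Lemma~\ref{lem:lie.sup.thm.app}(2) apply as you describe, and the rest of your argument goes through.
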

	
	Therefore, we complete the proof of Theorem~\ref{thm:FNICM}.
	
	\subsection{Corollaries}
	\hspace{1.5em}At the end of this paper, we discuss a special class of Lie conformal superalgebras. Let $R$ be the Lie conformal algebra of finite rank and contain a generator $X$ satisfying $[X_\lambda X]=0$. Set $X$ to be odd and we get a new Lie conformal superalgebra $R^\mathfrak{s}=R^\mathfrak{s}_\ep\oplus R^\mathfrak{s}_\op$, where $R^\mathfrak{s}_\ep=R\backslash\cp X$ and $R^\mathfrak{s}_\op=\cp X$. One can easily verify that it satisfies the axioms in Definition~\ref{def:lie conformal superalgebra}. Perhaps we can call it the \textit{single super deformation} of a Lie conformal algebra. Let $M$ be the FNICM of $R^\mathfrak{s}$. Note that $0=[X_\lambda X]_{2\lambda}v=2X_\lambda(X_\lambda v)$ for any $v\in M$. By Lemma~\ref{lem:free.finite.rank.mod}, $M$ is torsion-free. So we can deduce that $X_\lambda v=0$ and get the following corollary.
	\begin{corollary}
		Let $R$ be a Lie conformal algebra. Suppose that $R^\mathfrak{s}$ exits and $M$ is the FNICM over $R^\mathfrak{s}$. Then $M$ is a FNICM over $R^\mathfrak{s}_\ep$ with $X_\lambda v=0$.
	\end{corollary}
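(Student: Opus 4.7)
The plan mimics the two-step argument already sketched in the paragraph immediately preceding the statement and used throughout Section~5 (see, for instance, the treatments of $\mathcal{A}_i$ and $\mathcal{B}_1$).

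Step~1: trivialize the action of $X$. Since $[X_\lambda X]=0$ in $R^\mathfrak{s}$ and $X$ is odd, the conformal super Jacobi identity \eqref{conmod.2} with $a=b=X$ gives, using $(-1)^{|X||X|}=-1$,
\begin{equation*}
0 = [X_\lambda X]_{\lambda+\mu}v = X_\lambda(X_\mu v) + X_\mu(X_\lambda v),
\end{equation*}
and specializing $\mu=\lambda$ yields $X_\lambda(X_\lambda v)=0$ for every $v\in M$. Since $M$ is torsion-free and of finite rank over $\cp$ by Lemma~\ref{lem:free.finite.rank.mod}, I would use this identity to force $X_\lambda v=0$.

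Step~2: reduce to $R^\mathfrak{s}_\ep$. With $X$ acting trivially, the $R^\mathfrak{s}$-action on $M$ factors through $R^\mathfrak{s}_\ep$. Because $X$ is the only generator that interchanges the even and odd components, both $M_\ep$ and $M_\op$ become $R^\mathfrak{s}$-submodules of $M$, and the irreducibility of $M$ forces exactly one of them to vanish. The surviving component is then an irreducible, finite, nontrivial conformal $R^\mathfrak{s}_\ep$-module, which is the claim.

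The subtle point is Step~1: extracting $X_\lambda v=0$ from $X_\lambda(X_\lambda v)=0$. The cleanest implementation I would carry out is through the $\cp$-submodule $N:=\{v\in M : X_\lambda v=0\}$. Using $[X_\mu a]\in\bC[\mu]\otimes\cp X$ for every $a\in R^\mathfrak{s}_\ep$ (because $R^\mathfrak{s}_\op=\cp X$ is one-dimensional as a free $\cp$-module), one verifies from the Jacobi identity that $X_\mu(a_\lambda v)$ vanishes for $v\in N$, so $N$ is in fact an $R^\mathfrak{s}$-submodule of $M$. Irreducibility then reduces matters to exhibiting a nonzero element of $N$, which I would obtain by analyzing the $\cp$-linear odd operator $X_{(0)}$: it squares to zero on a finite-rank, torsion-free $\cp$-module, so if it is not already zero its kernel is a nonzero $\cp$-submodule, and iterating this observation for the higher $X_{(k)}$ produces the desired nonzero vector in $N$.
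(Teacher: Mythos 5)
Your proposal is correct and follows the same overall route as the paper: both start from $[X_\lambda X]=0$ and the module axiom \eqref{conmod.2} to obtain $X_\lambda(X_\lambda v)=0$, and then regard $M$ as a module over $R^\mathfrak{s}_\ep$. The genuine difference is in how one passes from $X_\lambda(X_\lambda v)=0$ to $X_\lambda v=0$. The paper justifies this step by invoking only the torsion-freeness of $M$ from Lemma~\ref{lem:free.finite.rank.mod}; by itself that is not sufficient, since an odd square-zero $\cp$-linear operator on a free module of rank at least two need not vanish, so irreducibility has to enter somewhere. Your implementation supplies exactly that missing ingredient: you check that $N=\{v\in M\;|\;X_\lambda v=0\}$ is an $R^\mathfrak{s}$-submodule (here the hypothesis that $R^\mathfrak{s}_\op=\cp X$ has rank one is what makes $[X_\mu a]_{\lambda+\mu}v$ vanish for $v\in N$), and you produce a nonzero element of $N$ from the square-zero operators $X_{(k)}$, after which irreducibility gives $N=M$. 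Two points are worth making explicit when you write this up: the iterated-kernel step needs the full anticommutativity $X_{(i)}X_{(j)}+X_{(j)}X_{(i)}=0$, which comes from $[X_\lambda X]_{\lambda+\mu}v=0$ with $\lambda,\mu$ independent (not merely the specialization $\mu=\lambda$), in order to know that each kernel is stable under the remaining operators; and it needs $X_{(k)}M=0$ for $k\gg0$ (quasi-finiteness of the conformal action on a finitely generated module) so that the iteration terminates. Your Step~2, that the parity components become submodules once $X$ acts trivially and hence one of them is all of $M$, is routine and matches what the paper leaves implicit. Overall your argument is the same in outline but more complete at the one step the paper glosses over.
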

	
	Recall an important theorem in \cite{XHW}, which shows that any FNICM of a finite Lie conformal algebra $R$ satisfying $R/\Rad(R)=\Vir$ is free of rank one, where $\Rad(R)$ is the radical of $R$.
	\begin{theorem}\label{thm:rk1.mod}\textup{(\cite{XHW})}
		Let $R$ be a free finite Lie conformal algebra such that $R/\Rad(R)=\Vir$. Then any finite nontrivial conformal $R$-module $M$ must contain a free rank one conformal $R$-submodule.
	\end{theorem}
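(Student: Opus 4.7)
The plan is to combine the conformal analogue of Lie's theorem (Proposition~\ref{lem:con.Lie.thm}) applied to $\Rad(R)$ with the classification of $\Vir$-modules (Proposition~\ref{vir.mod}), in a manner analogous to how the nilpotent ideal was handled in the proof of Lemma~\ref{lem:D(1).mod.rank1}. Write $I=\Rad(R)$, which is a solvable ideal of $R$ with $R/I\cong\Vir$; fix a lift $L\in R$ of the Virasoro generator. Because $R$ is free and finite as a $\cp$-module, so is $I$.

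First, I would view $M$ as a conformal $I$-module. By Proposition~\ref{lem:con.Lie.thm}, there exist a nonzero $v_0\in M$ and a $\bC$-linear map $\phi\colon I\to\bC[\lambda]$, $a\mapsto\phi_a(\lambda)$, such that $a_\lambda v_0=\phi_a(\lambda)v_0$ for every $a\in I$. Define the weight space
\begin{equation*}
M_\phi=\{w\in M\ |\ a_\lambda w=\phi_a(\lambda)w,\ \forall\,a\in I\}.
\end{equation*}
Since $M$ is torsion-free (Lemma~\ref{lem:free.finite.rank.mod}) and $[\partial,I]\subseteq I$, one checks that $\partial M_\phi\subseteq M_\phi$, so $M_\phi$ is a $\cp$-submodule of $M$.

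Second, I would examine how the lifted Virasoro generator $L$ interacts with $M_\phi$. Using the Jacobi identity, for $a\in I$ and $w\in M_\phi$,
\begin{equation*}
a_\lambda(L_\mu w)=L_\mu(a_\lambda w)+[a_\lambda L]_{\lambda+\mu}w=\phi_a(\lambda)L_\mu w+[a_\lambda L]_{\lambda+\mu}w,
\end{equation*}
and since $I$ is an ideal, $[a_\lambda L]\in I[\lambda]$, so the last term is again of weight type $\phi$ acting on $w$. A careful bookkeeping of the polynomial shift in $\lambda$ then shows that $L_\lambda$ either preserves $M_\phi$ or, after passing to a suitable $\cp$-stable subquotient $\bar M_\phi$ (organised by a filtration indexed by the degree of the shifts, in the spirit of Cheng--Kac's Lemma~\ref{CK.lemma}), induces a well-defined $\Vir$-action on $\bar M_\phi$. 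Finiteness of $M$ together with Lemma~\ref{CK.lemma} guarantees that $\bar M_\phi$ is nonzero and finitely generated over $\cp$.

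Third, I would apply Proposition~\ref{vir.mod} to $\bar M_\phi$: any finite nontrivial conformal $\Vir$-module contains a rank one submodule $\cp\bar w$ with $L_\lambda\bar w=(\partial+\Delta\lambda+\eta)\bar w$. Lifting $\bar w$ to a representative $w\in M$ and combining with $a_\lambda w=\phi_a(\lambda)w$ for $a\in I$, one gets that $\cp w$ is a free rank one conformal $R$-submodule of $M$, as required. The principal obstacle is the second step: one must rule out the scenario in which $L_\lambda$ shifts the $I$-weight $\phi$ in an uncontrolled way, so that no single weight space supports a $\Vir$-action. Handling this requires using that $I$ is solvable (hence its derived series terminates) together with the finite rank of $M$, so that the chain of possible weight shifts stabilises and a genuine $\Vir$-quotient emerges.
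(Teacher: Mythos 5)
First, a point of reference: the paper does not prove Theorem~\ref{thm:rk1.mod} at all --- it is imported verbatim from \cite{XHW} --- so your attempt can only be compared with the standard argument, which the paper reproduces in miniature for $\bar{\mathcal{D}}$ in Lemma~\ref{lem:D(1).mod.rank1}. Measured against that, your proposal contains one concrete error and leaves the decisive step unproved. The error is the claim that $M_\phi=\{w\in M\mid a_\lambda w=\phi_a(\lambda)w,\ \forall a\in I\}$ is $\partial$-stable: by \eqref{conmod.1}, $a_\lambda(\partial w)=(\partial+\lambda)\phi_a(\lambda)w=\phi_a(\lambda)\,\partial w+\lambda\phi_a(\lambda)w$, so $\partial w\in M_\phi$ forces $\lambda\phi_a(\lambda)w=0$, i.e.\ $\phi_a=0$ by torsion-freeness. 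Unless $I=\Rad(R)$ acts trivially (which it need not --- in the Heisenberg--Virasoro case $B_\lambda v=\omega v$ with $\omega\neq0$ occurs), the conformal ``weight space'' is not a $\cp$-submodule; the only $\partial$-stable object Proposition~\ref{lem:con.Lie.thm} hands you is $\cp v_0$ itself, on which $a_\lambda\big(p(\partial)v_0\big)=p(\partial+\lambda)\phi_a(\lambda)v_0$. This already signals that the finite-dimensional weight-space formalism does not transport to the conformal setting in the way your argument needs.

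Second, the step you yourself call ``the principal obstacle'' is exactly where the theorem lives, and it is only gestured at, not carried out. Worse, even if one granted a $\Vir$-action on some subquotient $\bar M_\phi$ and found $\cp\bar w$ there via Proposition~\ref{vir.mod} (which, as stated, classifies rank-one and irreducible modules and does not by itself assert that every finite nontrivial $\Vir$-module contains a rank-one submodule), a lift $w\in M$ of $\bar w$ satisfies the eigen-equations only modulo the terms you quotiented out, so $\cp w$ need not be closed under the $R$-action: the theorem demands an honest submodule of $M$, not of a subquotient. The argument that actually works --- in \cite{XHW}, and in the paper's own Lemma~\ref{lem:D(1).mod.rank1} --- runs through the extended annihilation algebra $\Lie(R)^e$: one introduces the filtration $\mathcal{L}_n$, uses Proposition~\ref{prop:conformal} and Cheng--Kac's Lemma~\ref{CK.lemma} to find a minimal $N$ with $M_N=\{v\mid\mathcal{L}_N v=0\}$ nonzero and finite-dimensional, observes that the relevant subalgebra $\bC(\partial-L_{-1})\oplus\mathcal{L}_0\oplus\cdots$ acts on $M_N$ through a finite-dimensional \emph{solvable} Lie algebra, and applies ordinary Lie's theorem there to produce a single $v$ with $L_{-1}v=(\partial+\eta)v$ and $x v=\ell(x)v$ for all the remaining generators simultaneously; it is this simultaneous eigenvector, together with $[\partial,\mathcal{L}_n]=\mathcal{L}_{n-1}$, that makes $\cp v$ a genuine free rank-one conformal $R$-submodule. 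I would recast your proof in that framework rather than try to repair the conformal weight spaces.
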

	Thus, we have the following corollary:
	\begin{corollary}
		Let $R$ be the Lie conformal algebra defined in Theorem~\ref{thm:rk1.mod}. Then any FNICM over $R^\mathfrak{s}$ if $R^\mathfrak{s}$ exists is free of rank one.
	\end{corollary}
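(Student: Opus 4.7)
The plan is to extend the action of $R^\mathfrak{s}_\ep$ on $M$ to the entire Lie conformal algebra $R$ by letting $X$ act trivially, and then invoke Theorem~\ref{thm:rk1.mod} on $R$. By the preceding corollary, since $M$ is a FNICM over $R^\mathfrak{s}$, it is a FNICM over $R^\mathfrak{s}_\ep$ with $X_\lambda v=0$ for every $v\in M$. Because $R$ and $R^\mathfrak{s}$ share the same underlying $\cp$-module and $\lambda$-brackets, I keep the same $\cp$-module $M$ and declare $X_\lambda v=0$ to obtain an $R$-action. The only Jacobi identities new to $R$ that involve $X$ are of two types: for $a\in R^\mathfrak{s}_\ep$ the relation $[a_\lambda X]_{\lambda+\mu}v=a_\lambda(X_\mu v)-X_\mu(a_\lambda v)$ collapses to $0=0$ since $X$ acts as zero, and for the $X,X$ case the vanishing bracket $[X_\lambda X]=0$ yields $0=0$ as well. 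Hence $M$ is a well-defined finite nontrivial conformal $R$-module.

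Next I transfer irreducibility from $R^\mathfrak{s}$ to $R$. Decomposing $M=M_\ep\oplus M_\op$ along the $\bZz$-grading, each $M_\theta$ is a $\bZz$-graded $R^\mathfrak{s}$-submodule of $M$, because $R^\mathfrak{s}_\ep$ preserves parity and $X$ acts trivially. The irreducibility of $M$ over $R^\mathfrak{s}$ then forces $M=M_\ep$ or $M=M_\op$; up to parity change I assume $M=M_\ep$. Any nonzero $\cp$-submodule $N\subseteq M$ is now automatically concentrated in one parity, hence $\bZz$-graded, and is $X$-closed trivially, so every $R$-submodule of $M$ is already an $R^\mathfrak{s}$-submodule. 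Thus irreducibility over $R^\mathfrak{s}$ yields irreducibility over $R$, and nontriviality is inherited because $R^\mathfrak{s}_\ep\subset R$ acts nontrivially on $M$.

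Applying Theorem~\ref{thm:rk1.mod} to the finite nontrivial conformal $R$-module $M$ then produces a free rank one $R$-submodule $\cp v\subseteq M$, and the irreducibility just established forces $M=\cp v$, giving the desired conclusion. The main subtlety in this route lies in the first step: one must confirm that the sign discrepancy between the super Jacobi identity on $R^\mathfrak{s}$ and the ordinary Jacobi identity on $R$ introduces no inconsistency on $M$. This works precisely because every potentially problematic term either involves $X$ acting as zero on $M$ or the vanishing bracket $[X_\lambda X]=0$, so the same data $M$ carries both module structures.
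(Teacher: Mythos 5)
Your argument is correct and is essentially the paper's (implicit) proof: use the preceding corollary to see that $X$ acts trivially, regard $M$ as a finite nontrivial conformal module over the underlying Lie conformal algebra $R$, and apply Theorem~\ref{thm:rk1.mod} together with irreducibility to conclude $M$ is free of rank one. Your extra care in checking that the $R$-action is well defined and that irreducibility transfers (via $M=M_\ep$ or $M=M_\op$) fills in details the paper leaves unstated, but the route is the same.
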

	
	For example, according to Theorem~\ref{thm:classification}, we know that the Lie conformal superalgebra of $\mathcal{B}_1,\mathcal{C}_1,\mathcal{C}_3,\mathcal{D}_1,\mathcal{D}_4$ is the single super deformation of some Lie conformal algebra. By Theorem~\ref{thm:FNICM}, the FNICMs over them are all of rank one and the $\lambda$-actions of $X$ are trivial.

	\footnotesize

\end{document}